\newif\ifPDF
\newtheorem{theorem}{Theorem}[section]
\newtheorem{lemma}[theorem]{Lemma}
\newtheorem{definition}[theorem]{Definition}
\newtheorem{example}[theorem]{Example}
\newtheorem{proposition}[theorem]{Proposition} 
\newtheorem{remark}[theorem]{Remark}
\newcommand{\supp}{\operatorname{supp}}
\DeclareMathOperator*{\argmin}{\arg\min}
 \newcommand{\bbE}{\mathbb E}
 \newcommand{\bbN}{\mathbb N}
\newcommand{\bLambda}{\boldsymbol \Lambda}
\newcommand{\ba}{\mathbf a} \newcommand{\bb}{\mathbf b}
\newcommand{\bc}{\mathbf c}
\newcommand{\bi}{\mathbf i}
\newcommand{\by}{\mathbf y}  
\newcommand{\bA}{\mathbf A} 
 \newcommand{\bF}{\mathbf F}
\newcommand{\bG}{\mathbf G}
 \newcommand{\cD}{\mathcal D} 
 \newcommand{\cF}{\mathcal F}
\newcommand{\cI}{\mathcal I}
\newcommand{\cO}{\mathcal O}
\newcommand{\cU}{\mathcal U}
\newcommand{\abs}[1]{|#1|}
\newcommand{\norm}[1]{\|#1\|}
\newcommand{\chapterauthor}[1]{%
	{\parindent0pt\vspace*{-25pt}%
		\linespread{1.1}\large\scshape#1%
		\par\nobreak\vspace*{35pt}}
	\@afterheading%
}
\newcommand{\drift}{\operatorname{drift}}
\newcommand{\R}{\mathbb{R}}
\newcommand{\diffuse}{\operatorname{diffuse}}
\newtheorem{assumption}{Assumption}
\newcommand{\starbc}{\mathbf{c}^*}
\title{Stoch-IDENT: New Method and Mathematical Analysis for Identifying  SPDEs from Data}
\author{Jianbo Cui, Roy Y. He}
\date{}
\begin{document}

\maketitle

\begin{abstract}
    In this paper, we propose Stoch-IDENT, a novel framework for identifying stochastic partial differential equations (SPDEs) from observational data. Our method can handle linear and nonlinear high-order SPDEs driven by time-dependent Wiener processes, accommodating both additive and multiplicative noise structures. To investigate the identifiability of SPDEs from trajectory data, we analyze the spectral properties of the solution's mean and covariance for linear SPDEs with constant coefficients, as well as the dimension of the solution space for parabolic and hyperbolic types, generalizing the identifiability theory for deterministic PDEs. Algorithmically, the drift term is identified via a sample-mean generalization of existing methods for PDE identification. For the diffusion term, we formulate a sparse regression problem with quadratic measurements induced from drift residuals and feature covariances. To address this challenging non-convex and non-smooth optimization, we develop a new greedy algorithm, Quadratic Subspace Pursuit (QSP), and prove that QSP enjoys stable support recovery under certain conditions. We validate Stoch-IDENT on various SPDEs, demonstrating its effectiveness through quantitative and qualitative evaluations.
    \end{abstract}
\section{Introduction}
Data-driven discovery of Partial Differential Equation (PDE) has been catching much attention since the introduction of methods such as PDE-FIND~\cite{rudy2017data} and IDENT~\cite{kang2021ident}. Generalizing the sparsity-inducing regression framework, Sparse identification of nonlinear dynamics (SINDy),  initiated by Brunton et al.~\cite{brunton2016discovering}, these methods find the differential operators that compose the models governing the dynamical behaviors of the observational data, yielding valuable techniques for model discovery in  biological sciences~\cite{ducos2025evaluating}  and geosciences~\cite{song2024towards}. 

Various advancements have been developed to address diverse application problems involving PDEs~\cite{messenger2021weak,berg2019data,zhang2021robust,xu2020dlga}. To mitigate noise amplification caused by numerical differentiation and overcome model redundancy, Robust-IDENT~\cite{he2022robust} was introduced. This framework employs an $\ell_0$-constraint formulation, efficiently solved using the Subspace Pursuit (SP) algorithm~\cite{dai2009subspace}. Due to its effectiveness, the framework has been widely adopted in recent PDE identification methods, including Weak-IDENT~\cite{tang2023weakident}  for  integral-form PDEs,  CaSLR~\cite{he2024much} for patch-based PDE identification, Fourier-IDENT~\cite{tang2023fourier} for identification in the frequency domain, GP-IDENT~\cite{he2025group} and WG-IDENT~\cite{tang2025wg} for PDEs with varying coefficients, and its integral-form variation. A comprehensive review  can be found in~\cite{he2025ident}. Additionally, deep learning-based approaches, such as those in~\cite{long2019pde,du2024discover,stephany2024weak}, offer alternative solutions.

However, a critical limitation of these deterministic frameworks lies in their inherent inability to fully capture the uncertainties and random fluctuations that are pervasive in real-world phenomena. The ubiquitous nature of stochasticity, whether intrinsic to physical systems or induced by external perturbations,  often necessitates a different modeling formulation~\cite{boninsegna2018sparse,wang2022data,tripura2023sparse,li2021data}. Meanwhile, there is increasing interest in data-driven identification of stochastic partial differential equations (SPDEs) to enable modeling random behaviors. 
To the best of our knowledge, the work  by Mathpati et al.~\cite{mathpati2024discovering} is the first study addressing data-driven identification of SPDEs. They extended the Kramers-Moyal expansion and applied the SS prior for sparsity while finding the model terms via a variational Bayesian framework using the Kullback–Leibler (KL) divergence.  More recently, Tripura et al.~\cite{tripura2024data} considered generalizing their framework with tailored dictionaries suitable for identifying Hamiltonian and Lagrangian of physical systems. Gerardos and Ronceray~\cite{gerardos2025principled} introduced  Parsimonious Stochastic Inference (PASTIS) as an alternative to Akaike's Information Criterion (AIC), which was adopted in SINDy-based methods. 

The aforementioned methods for SPDE identification are mainly based on the Kramers-Moyal expansion and focus on identifying the \textit{squared} diffusion part rather than the diffusion part itself. Indeed, recovering the latter, whether in a pathwise or an expectation sense, requires nonlinear regression. This is a more challenging procedure that has not been addressed in the previous work. Although these methods have been validated on various nonlinear and high-order SPDEs driven by additive noises, their applicability to SPDEs with multiplicative noises remains underexplored in the existing literature.

Another important yet unresolved issue is the theoretical understanding of when and why SPDEs can be uniquely identified from data. For deterministic PDEs, He et al.~\cite{he2024much} established a foundational theory, demonstrating that the identifiability of a PDE depends on the dimension of its solution space. Their work reveals why parabolic PDEs are inherently more challenging to identify than hyperbolic ones. In the specific case of linear PDEs with constant coefficients, they further proved that just two time snapshots of the solution trajectory suffice for unique identification, provided the initial condition contains sufficiently rich Fourier modes. However, the extension to SPDEs is non-trivial, as their inherent stochasticity introduces new complexities that necessitate a distinct analytical framework.

In this work, we address these challenges through both theoretical and algorithmic contributions. Specifically, we propose \textbf{Stoch-IDENT}, a novel framework  for identifying SPDEs from path sample data. Figure~\ref{fig_workflow} shows an overview of our method. Stoch-IDENT is capable of handling both linear and nonlinear high-order SPDEs driven by time-dependent Wiener processes with additive or multiplicative noise structures. 

On the theoretical side, 
we present a general identifiability theory for linear SPDEs with constant coefficients, which justifies the conditions under which SPDEs can be uniquely recovered from trajectory data. These results are independent of any specific algorithm. For the drift term, we show that identifiability holds when the initial data contains sufficiently many nontrivial Fourier modes. For the diffusion term, we exploit the covariance structure of stochastic integrals to show that the diffusion operator can be determined uniquely up to equivalence classes, providing the theoretical foundation for robust recovery  even in the presence of stochasticity. Furthermore, we analyze the solution space spanned by trajectories for both parabolic and hyperbolic SPDEs. For the parabolic case, we show that the solution space can be approximated to accuracy $\mathcal{O}(\epsilon)$ by a linear space of dimension $\mathcal{O}(|\log(\epsilon)|)^2$ in the averaged sense (Section~\ref{parabolic}). For the hyperbolic case, via stochastic characteristics, we obtain a similar polynomial-dimensional approximation result to that in the deterministic setting~\cite{he2024much} (Section~\ref{hyperbolic}).

On the algorithmic side, we exploit $\ell_0$-constrained sparse regression, in contrast to existing methods based on the Kramers--Moyal expansion or Bayesian frameworks. Our strategy follows a candidate generation and model validation pipeline~\cite{rudy2017data,kang2021ident,he2022robust}, for which we propose a new validation method based on time integration (Section~\ref{sec:model-selection}).
For the drift term (Section~\ref{sec_candidate_drift}), we reduce identification to deterministic PDE identification by taking path expectations, enabling us to leverage the  existing methods such as Robust-IDENT~\cite{he2022robust}. For the diffusion term (Section~\ref{sec_candidate_diffuse}), we formulate an $\ell_0$-constrained sparse regression problem with quadratic measurements~\cite{fan2018variable}. To address this combinatorially complex nonlinear problem, we develop \textbf{Quadratic Subspace Pursuit (QSP)} (Algorithm~\ref{QSPalgo}), a new greedy algorithm that extends the expansion--shrinkage paradigm~\cite{he2025group} of SP~\cite{dai2009subspace} to nonlinear regression. We further prove that QSP is guaranteed to maintain the true support throughout its iterations under certain conditions (Theorem~\ref{thm:main}). While we demonstrate its effectiveness in the context of SPDE identification, we highlight that QSP is also applicable to broader problems such as phase retrieval~\cite{balan2006signal}. As the identification of the diffusion term requires additional computation for the feature covariances, we design an efficient statistical test (Section~\ref{sec_stat_detect}) to detect the pure additive noise case, which reduces diffusion term identification to a single parameter estimation problem, bypassing the subsequent quadratic regression. 

To summarize, our contributions in this paper include:
\begin{enumerate}
    \item A novel framework, Stoch-IDENT, for identifying SPDEs driven by time-dependent Wiener processes with additive and multiplicative noise structures. The proposed framework exploits sample means and feature covariances to accurately recover the drift and diffusion terms  directly from sample trajectories.
    \item An in-depth identifiability analysis for linear SPDEs with constant coefficients, establishing conditions under which the drift and diffusion terms can be uniquely identified, independent of any specific algorithm. For parabolic and hyperbolic SPDEs, we analyze the dimensions of the solution trajectory spaces, generalizing the identifiability theory for deterministic PDEs in~\cite{he2024much}.

    \item A new greedy algorithm, Quadratic Subspace Pursuit (QSP), for sparse regression problems with quadratic measurements, developed for identifying the diffusion term. We prove that under certain conditions, QSP is guaranteed to maintain the true support throughout its iterations. The proposed algorithm is also applicable to broader problems such as phase retrieval.

\item Two algorithmic enhancements: (i) a statistical test for the pure additive noise case, which bypasses the quadratic regression upon detection; and (ii) a time-integration-based model selection method for identifying the best candidate SPDE.
    
    \item Numerical experiments validating Stoch-IDENT across various SPDEs with additive and multiplicative noise.
\end{enumerate}

This paper is organized as follows. In Section~\ref{sec_proposed_method}, we present the framework of Stoch-IDENT for identifying SPDEs from data. 
In Section~\ref{sec_identifiability}, we analyze the identifiability conditions for linear parabolic and hyperbolic SPDEs with constant coefficients.  In Section~\ref{sec_algorithm}, we describe the algorithmic components of Stoch-IDENT, including Subspace Pursuit (SP) for drift candidate generation, the statistical test for additive noise, and the proposed Quadratic Subspace Pursuit (QSP) for diffusion identification. In Section~\ref{sec_numerical}, we present numerical results to validate our method. We conclude this paper in Section~\ref{sec_conclusion}.

\begin{figure}[t]
\centering
\includegraphics[width=0.8\textwidth]{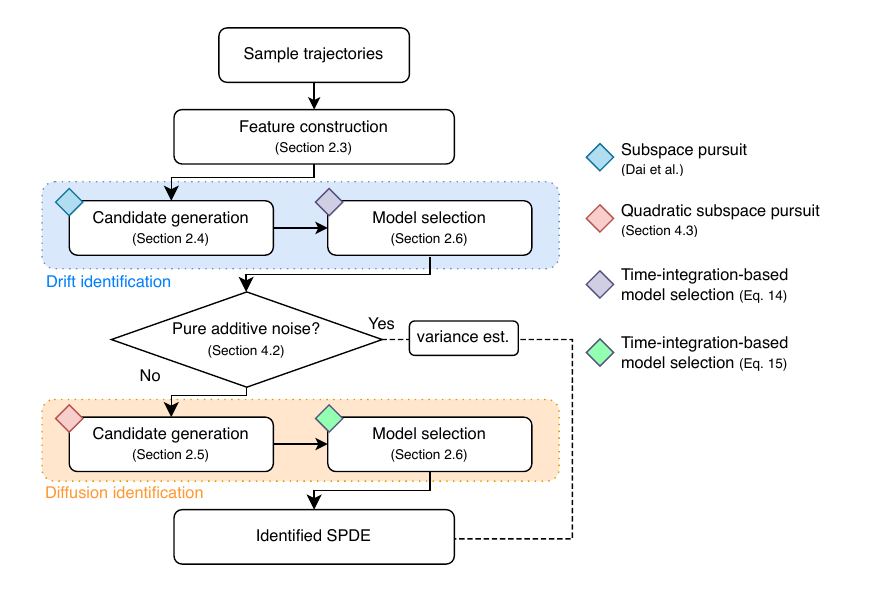}
\caption{Workflow of the proposed Stoch-IDENT for identifying SPDEs from trajectory data.}\label{fig_workflow}
\end{figure}
\section{Stoch-IDENT Framework}\label{sec_proposed_method}
 We propose \textbf{Stoch-IDENT}  to find an SPDE whose strong solutions closely approximate the observed trajectories. Figure~\ref{fig_workflow} illustrates our proposed workflow.  
We provide a high-level overview of the framework here, while deferring details to Section~\ref{sec_algorithm}.

\subsection{Mathematical formulation of the problem}\label{sec_math_formulation}

Let $\mathcal{U}=\{U_n: \Gamma\to\mathbb{R}\}_{n=1}^N$ be a collection of observed trajectories, where $\Gamma$ is a grid on the observation time-space domain $[0,T]\times \mathcal{D}\subset\mathbb{R}\times\mathbb{R}^d$ (with $d\geq 1$ denoting the space dimension), and $N$ is the number of sample trajectories. Assume that each $U_n$ can be approximated by a strong solution of an unknown SPDE in It\^o's sense driven by a time-dependent Wiener process $W$, of the form
\begin{equation}
du = \sum_{k=1}^Ka_kF_k\,dt + \sum_{j=1}^J b_jG_jdW(t)\;,\label{eq_SPDE_template}
\end{equation}
subject to appropriate boundary conditions.
{For the well-posedness results of SPDEs, we refer to \cite{MR1207136} and references therein.}
In the generic form~\eqref{eq_SPDE_template}, the drift term is a linear combination of $K$ \textit{candidate drift features} $\{F_k: [0,T]\times \mathcal{D}\to\mathbb{R}\}_{k=1}^K$, and the diffusion term is a linear combination of $J$ \textit{candidate diffusion features} $\{G_j: [0,T]\times \mathcal{D}\to\mathbb{R}\}_{j=1}^J$. Each feature is assumed to be a linear or nonlinear transformation of $u$ and its spatial derivatives, e.g., $u_x$, $uu_x$, or $\sin(u)$. {The choice of dictionary can be as general as polynomial types~\cite{kang2021ident,he2022robust,tang2023weakident}, or incorporate additional terms based on prior knowledge~\cite{brunton2016discovering,messenger2021weak,tang2026priorident}.} For $k=1,2,\dots,K$, we call $a_k\in\mathbb{R}$ the feature coefficient of the $k$-th drift feature; and for $j=1,2,\dots, J$, we call $b_j\in\mathbb{R}$ the feature coefficient of the $j$-th diffusion feature. When $a_k=0$ for some $k$, the $k$-th drift feature is inactive; otherwise, it is active. The same applies to the diffusion features.

The approximation of the observed trajectories by the solution is understood in the \textit{strong sense}. More precisely, let $(\Omega, \mathcal{F}, (\mathcal{F})_{t\geq 0},\mathbb{P})$ be the completely filtered probability space where~\eqref{eq_SPDE_template} is defined, and let $\Gamma = \{(t_i,x_m): i=0,1,\dots,I,m=0,1,\dots,M\}$ be the set of grid points. Let $\{\omega_n\}_{n=1}^N$ be $N$ samples from $\Omega$, and let $u_n:=u(\omega_n): [0,T]\times\mathcal{D}$ be a strong solution of~\eqref{eq_SPDE_template} driven by the sample path $W(t, \omega_n)$. For $n=1,2,\dots,N$, the  trajectory $U_n$ is obtained by sampling the solution:
\begin{equation*}
U_n(t_i,x_m) = u_n(t_i,x_m)\;,\quad   ~i=0,1,\dots,I; m=0,1,\dots,M.
\end{equation*}
 Our objective is to  identify the active drift and diffusion features and recover the corresponding feature coefficients based on the observed data. 
 This framework can be readily adapted to the identification of SDEs or more general stochastic systems. For further details on possible extensions,  we refer the readers to~\cite{kang2021ident,he2022robust, tang2023weakident}. 

 It should be noted that we focus on \textit{strong solutions} primarily to justify pathwise evaluation at grid points and sample‑path regression. We fix the probability space and filtration both to avoid technical complications and because, for simplicity, the data are generated from a fixed simulator. This setting also serves as a starting point for developing an identification theory and algorithms for SPDEs. One may instead formulate the identification problem in terms of stochastically weak or variational solutions (see \cite{MR3410409} for the relevant definitions).

\subsection{Overview of the proposed Stoch-IDENT framework}\label{sec_overview_SDE_ident}
Suppose the underlying SPDE can be represented by the generic form~\eqref{eq_SPDE_template}. We propose to identify the active features by the following major steps:\\[3pt]
\noindent \textbf{Step 1. Identify  drift features from sample means.} By leveraging the martingale property of the stochastic integral in the It\^o's sense, the diffusion part of~\eqref{eq_SPDE_template} vanishes when taking expectations. Thus, we first search for the active drift features and estimate the associated coefficients by exploiting the following relation:
\begin{equation}\label{eq_drift_identification_relation}
d\mathbb{E}[u] = \sum_{k=1}^K a_k\mathbb{E}[F_k]\,dt\;.
\end{equation}
To ensure interpretability and focus on the most relevant features, we require that only a few features are active, i.e., $(a_1,\dots,a_K)$ is sparse. This task can be addressed by any existing  identification methods~\cite{kang2021ident,he2022robust,rudy2017data}. We explain the details in Section~\ref{sec_candidate_drift} and Section~\ref{sec_algorithm}.\\[2pt]
\noindent\textbf{Step 2. Identify diffusion features from squared drift residuals.} To find the diffusion features and the associated coefficients $b_j$, we  consider the following relation:
{\small 
\begin{align}\label{eq_diffuse_identification_relation}
\mathbb E\left[\left(D_t^+u(t_i,x)-\int_{t_i}^{t_{i+1}}\sum_{k=1}^K a_k F_k(t,x)\,dt\right)^2\right]=\mathbb E \left[\left(\int_{t_i}^{t_{i+1}}\sum_{j=1}^J b_j G_j(t,x)\,dW(t)\right)^2 \right]. 
\end{align}}for any $x\in\cD$ and $i=1,\dots,I$, where $t_1<t_2<\cdots<t_I$ is a sequence of times in $[0,T]$, and $D_t^+u(t_i,x):=u(t_{i+1}, x)-u(t_i,x)$ is the forward time difference. If the estimated coefficients $\{\widehat{a}_k\}_{k=1}^K$  from Step 1 are close to the true values, we expect that~\eqref{eq_diffuse_identification_relation} holds approximately after replacing $a_k$ with $\widehat{a}_k$.  Note that~\eqref{eq_drift_identification_relation} is only linear in $\{a_k\}_{k=1}^K$, where as~\eqref{eq_diffuse_identification_relation} is quadratic in  $\{b_j\}_{j=1}^J$. This is challenging especially when sparsity is desired, as presented in Section~\ref{sec_candidate_diffuse}. For this, we propose a novel algorithm  in Section~\ref{sec:QSP}.

After obtaining  $\widehat{\ba}=(\widehat{a}_1,\dots,\widehat{a}_K)\in\mathbb{R}^K$ from~\eqref{eq_drift_identification_relation} and  $\widehat{\bb}=(\widehat{b}_1,\dots,\widehat{b}_J)\in\mathbb{R}^J$ from~\eqref{eq_diffuse_identification_relation}, we express the identified SPDE as: 
\begin{equation}
du = \sum_{k=1}^K\widehat{a}_kF_k\,dt + \sum_{j=1}^J \widehat{b}_jG_jdW(t)\;.\label{eq_SPDE_identified}
\end{equation}
If the identified model~\eqref{eq_SPDE_identified} is well-posed,  by~\eqref{eq_drift_identification_relation} and~\eqref{eq_diffuse_identification_relation}, we expect that if $\widehat{\ba}$ and $\widehat{\bb}$ are close to the true coefficients $\ba=(a_1,\dots,a_K)$ and  $\bb=(b_1,\dots,b_J)$, respectively, then increments of solutions of~\eqref{eq_SPDE_template}  and~\eqref{eq_SPDE_identified} will have similar means and variances.  In Section~\ref{sec_identifiability}, We will theoretically justify the feasibility of identification based on the relations~\eqref{eq_drift_identification_relation} and~\eqref{eq_diffuse_identification_relation}, under conditions on the richness of the dynamics exhibited by the observed trajectories.

In general, the identification algorithm itself does not currently enforce well‑posedness constraints. One may use simple post‑hoc checks (e.g. coefficient nonnegativity or bounds on high‑order derivatives) that can be used to discard obviously ill‑posed models, and we mark this as an important direction for future work.

\subsection{Drift and diffusion feature systems}\label{sec_feature_system}
We mainly focus on the case where the data $U_n:\Gamma\to\mathbb{R}$ is assumed to be sampled from a strong solution $u_n:=u(\omega_n): [0,T]\times\cD\to\mathbb{R}$ of~\eqref{eq_SPDE_template} with $\cD\subset\mathbb{R}^d$ for some integer $d\geq 1$ and $\omega_n\in \Omega$ for $n=1,\dots,N$. The extension to multi-dimensional outputs or stochastic systems is straightforward.  The algorithmic procedures and implementation details are described in Section~\ref{sec_algorithm}. Consider the It\^{o}'s integral form of~\eqref{eq_SPDE_template} evaluated at  equidistant time points $0=t_0<t_1<\dots<t_I=T$ as follows
 \begin{equation}\label{eq_SPDE_integral_form} 
 D_t^{-}u(t_i,x)= \sum_{k=1}^K
 a_k\int_{t_{i-1}}^{t_i}F_k(t,x)\,dt +\sum_{j=1}^J b_j\int_{t_{i-1}}^{t_i} G_j(t,x)\,dW(t)\;,
 \end{equation}
where  $D_t^{-}u(t_i,x):=u(t_i, x) - u(t_{i-1},x)$  is the backward time difference, $i=1,\dots,I$, and $x\in\cD$. Applying the left Riemann sum and Euler-Maruyama scheme to approximate the deterministic and stochastic integrals in~\eqref{eq_SPDE_integral_form}, respectively, and  then formally taking the expectation    yields: 
\begin{equation}\label{eq_deterministic_part}
\bbE[u(t_{i},x)]-\bbE[u(t_{i-1},x)] =\Delta t\sum_{k=1}^Ka_k \bbE[F_k(t_{i-1},x)]+{\cO((\Delta t)^{2}})\;.
\end{equation}
For a set of points $\{x_m\}_{m=1}^M\subset\cD$, we define the \textit{drift feature matrix} as
\begin{equation}\label{eq_drift_feature_matrix}
\bF := \Delta t\cdot \begin{pmatrix}
  \bbE[F_1(t_1,x_1)]&\bbE[F_2(t_1,x_1)]&\cdots& \bbE[F_K(t_1,x_1)]\\
  \bbE[F_1(t_1,x_2)]&\bbE[F_2(t_1,x_2)]&\cdots&\bbE[F_K(t_1,x_2)]\\
  \vdots&\vdots&\ddots&\vdots\\
  \bbE[F_1(t_{I},x_M)]&\bbE[F_2(t_{I},x_M)]&\cdots&\bbE[F_K(t_{I},x_M)]\\
\end{pmatrix}\in\mathbb{R}^{IM\times K}\;,
\end{equation}
where each column is obtained by flattening the expected values of candidate drift  features  evaluated at discrete time-space points.  Define the \textit{drift response vector} as
\begin{equation}\label{eq_drift_feature_response}
\by:=\begin{pmatrix}
\bbE[u(t_{1},x_1)]-\bbE[u(t_{0},x_1)]\\
\bbE[u(t_{1},x_2)]-\bbE[u(t_{0},x_2)]\\
\vdots\\
\bbE[u(t_{I},x_M)]-\bbE[u(t_{I-1},x_M)]\\
\end{pmatrix}\in\mathbb{R}^{IM}\;.
\end{equation}
The relation~\eqref{eq_deterministic_part} can thus be compactly  expressed  as $\by = \bF \ba + {\cO((\Delta t)^{2}})$, where $\ba=(a_1,\dots,a_K)^\top\in\mathbb{R}^K$ is the unknown feature coefficient vector.

According to~\eqref{eq_diffuse_identification_relation}, the expectation of the squared drift residual 
\begin{equation*}
r^2(t_i,x,\ba):=\left(D_t^{-}u(t_i,x)-\Delta t \sum_{k=1}^K a_kF_k(t_{i-1},x)\right)^2
\end{equation*}
 should be close to

\begin{equation}\label{eq:covariance-structure}
\mathbb E \left(\sum_{j=1}^J b_jG_j(t_{i-1},x)(W(t_i)-W(t_{i-1}))\right)^2 = \Delta t \sum_{j=1}^J\sum_{s=1}^Jb_jb_s\bbE\left[ G_j(t_{i-1},x)\cdot G_s(t_{i-1},x)\right]\;
\end{equation}
with the reminder term $\mathcal O((\Delta t)^{2})$ (this is due to mean square error for approximating the stochastic integral, see e.g. \cite{MR4241457}),
for any $i=1,\dots,I$ and $ x\in\cD$. 
We define the $i$-th \textit{diffuse feature matrix} as
{\small
\begin{equation}\label{eq_diffuse_matrix}
\bG_{i} := \Delta t\cdot \begin{pmatrix}\bbE\left[\mathlarger{\fint}_{\cD}G_1(t_{i-1},y)\cdot G_1(t_{i-1},y)\,dy\right]&\cdots& \bbE\left[\mathlarger{\fint}_{\cD}G_1(t_{i-1},y)\cdot G_J(t_{i-1},y)\,dy\right]\\
  \vdots&\ddots&\vdots\\
  \bbE\left[\mathlarger{\fint}_{\cD}G_J(t_{i-1},y)\cdot G_1(t_{i-1},y)\,dy\right]&\cdots& \bbE\left[\mathlarger{\fint}_{\cD}G_J(t_{i-1},y)\cdot G_J(t_{i-1},y)\,dy\right]
\end{pmatrix}\;,
\end{equation}
}where $\mathlarger{\fint}_{\cD}g(y)\,dy:=|\cD|^{-1}\cdot\int_{\cD}g(y)\,dy$ denotes the average integral and $|\cD|$ is the Lebesgue measure of $\cD$. Moreover, we define the $i$-th \textit{diffuse response} as
\begin{equation}\label{eq_diffuse_response}
\zeta_i(\ba) = \bbE\left[\mathlarger{\fint}_{\cD} r^2(t_i,y,\ba)\,dy\right]\in\mathbb{R}\;, i=1,\dots,I\;.
\end{equation}
Since only discrete observations of the sampled strong solutions are available, the feature matrices and responses above must be approximated from data. We call the drift feature matrix together with the drift feature response a \textit{drift feature system}, and the diffusion feature matrices with the diffusion responses a \textit{diffusion feature system}; their sample approximations are discussed in Section~\ref{sec_mean_approximation}.

\subsection{Generation of candidate drift models}\label{sec_candidate_drift}
Instead of directly searching for the optimal drift model, we propose generating a sequence of \textit{candidate models} and then selecting the optimal one among them via the validation method introduced in Section~\ref{sec:model-selection}. To produce a pool of candidates, we consider the problem
\vspace{-1mm}
\begin{equation}\label{eq_L0_drift_select_greedy}
\begin{aligned}
\min_{\bc\in\mathbb{R}^{K}} &\|\bF\bc - \by\|_2^2 \\
\text{s.t.}&\|\bc\|_0 =  k
\end{aligned}
\end{equation}
for $k = 1,2,\dots,K$, where $\|\bc\|_0:=|\operatorname{supp}(\bc)|$ counts the number of non-zero entries of a vector $\bc$. A solution of~\eqref{eq_L0_drift_select_greedy} yields a candidate drift model with exactly $k$ active features; thus, by solving it for each $k = 1, \ldots, K$, we obtain $K$ candidate models from which to select. This strategy of sequentially generating candidate models by sweeping a \textit{discrete} sparsity parameter was initiated in~\cite{he2022robust} for identifying constant-coefficient PDEs. 
{Other frameworks, such as the $\ell_1$-regularization of IDENT~\cite{kang2021ident} and the $\ell_0$-regularization of SINDy~\cite{brunton2016discovering}, can also be used to generate candidate drift models by sweeping a \textit{continuous} regularization parameter. Our choice is motivated by the ease of producing exactly $K$ distinct candidates without redundancy. For numerical evidence on their performance comparisons, we refer the readers to~\cite{he2022robust}.
}

\subsection{Generation of candidate diffusion models}\label{sec_candidate_diffuse}
According to~\eqref{eq_diffuse_identification_relation}, the diffuse response~\eqref{eq_diffuse_response} should remain close to a quadratic form determined by the diffuse feature matrix~\eqref{eq_diffuse_matrix}. To enforce sparsity of the diffusion coefficients in the same spirit as for the drift model, we propose to generate candidate diffusion models by considering the following $\ell_0$-constrained sparse regression problem with quadratic measurements:
\vspace{-1mm}
\begin{equation}\label{eq_L0_diffuse_select}
\begin{aligned}
\min_{\bc\in\mathbb{R}^J} &\sum_{i=1}^I\left(\bc^\top \bG_i \bc - \zeta_i(\widehat{\ba})\right)^2\\
\text{s.t.}&\|\bc\|_0=j
\end{aligned}\;,
\end{equation}
for $j=1,2,\dots, J$, where $\zeta_i(\widehat{\ba})$ is defined in~\eqref{eq_diffuse_response} with the drift coefficients replaced by the optimal $\widehat{\ba}$ selected from the candidates generated by~\eqref{eq_L0_drift_select_greedy} (see Section~\ref{sec:model-selection} for the selection criterion). By sweeping $j$ from $1$ to $J$, we obtain a list of candidate diffusion models with different sparsity levels, from which the optimal one is selected according to a method introduced in Section~\ref{sec:model-selection}.

{Our formulation differs from existing works such as~\cite{mathpati2024discovering}, which concentrate on the \textit{squared} diffusion part, i.e., the variance of the noise term, thereby reducing the problem to linear regression. In this case, existing frameworks for identifying ODEs/PDEs such as STLS~\cite{brunton2016discovering} and STRidge~\cite{rudy2017data} with $\ell_0$ regularization, and SP with $\ell_0$-constraint~\cite{he2022robust}, can be directly applied. By contrast, our quadratic formulation directly works on the diffusion part, enabling identification of more general noise structures.}
We also note that~\eqref{eq_L0_diffuse_select} is a sparse regression problem with quadratic measurements, related to the problem of phase retrieval~\cite{balan2006signal,balan2009painless,fan2018variable,chen2025solving}. Most existing methods induce sparsity through $\ell_1$-regularization. In this work, we propose to address~\eqref{eq_L0_diffuse_select} by  Quadratic Subspace Pursuit (QSP), a new greedy algorithm introduced in Section~\ref{sec:QSP}.

\subsection{Model selection via time integration}\label{sec:model-selection}
{To select the optimal models from the candidates, we need a validation method. Existing approaches include information-theoretic criteria such as AIC and BIC, as adopted in SINDy~\cite{brunton2016discovering} and PDE-FIND~\cite{rudy2017data}, and the Time Evolution Error (TEE) proposed in Robust-IDENT~\cite{he2022robust}. In our stochastic setting, AIC/BIC are unreliable as they do not account for the accumulated dynamics over time, while TEE becomes computationally prohibitive since it requires numerical time integration for each candidate. To exploit the benefits of evolution error accumulation while remaining computationally efficient, we propose to select the optimal candidates by integrating the residuals over the 
full time interval $[0,T]$, providing a global assessment of how well a candidate model reproduces the observed dynamics. Specifically, given any set of drift coefficient vectors $\{\ba_1,\ba_2,\dots, \ba_M\}\subset\mathbb{R}^K$, we evaluate 
the following drift validation score}:
\begin{equation}\label{eq_drift_score}
S_{\drift}(\ba_m) := I^{-1}\cdot\sum_{i=1}^I\int_{\cD}\left(\sum_{k=1}^K a_{m,k}\int_{0}^{t_i}\mathbb{E}\left[F_k(s,x)\right]\,ds+\mathbb E[ u(0,x)]-\bbE[u(t_i,x)]\right)^2\,dx,
\end{equation}
for $m=1,\dots,M$, where $a_{m,k}$ denotes the $k$-th entry of $\ba_m$.   Given an estimation $\widehat{\ba}:=\arg\min\limits_{m=1,\dots,M}S_{\drift}(\ba_m)$ for the drift coefficients and any set of diffusion coefficient vectors $\{\bb_1,\dots,\bb_{L}\}\subset\mathbb{R}^J$, we define 
\begin{equation}\label{eq_S_diffuse}
S_{\diffuse}(\bb_l|\widehat{\ba}) := I^{-1}\cdot\sum_{i=1}^I\int_\cD\left(\mathbb{E}\left[R^2_i(x,\widehat{\ba})\right]-\mathbb{E}\int_{0}^{t_i}\left(\sum_{j=1}^J b_{l,j}G_j(s,x)\right)^2\,ds\right)^2\,dx,
\end{equation}
for $l=1,\dots,L$, where  $R^2_i(x,\widehat{\ba}):=\left(\sum_{k=1}^K\widehat{a}_k\int_{0}^{t_i} F_k(s,x)\,ds+u(0,x)-u(t_i,x)\right)^2$ and $b_{l,j}$ is the $j$-th entry of $\bb_l$.  Notice that~\eqref{eq_S_diffuse} quantifies the integral form of~\eqref{eq_diffuse_identification_relation} for a pair of candidate diffusion and drift  coefficients. Similarly to the choice of drift coefficients,  we  set the optimal $\widehat{\bb}:=\arg\min_{l=1,\dots,L}S_{\diffuse}(\bb_l|\widehat{\ba})$ as the optimal diffusion coefficient vector associated with a drift coefficient vector $\widehat{\ba}$. Consequently, the SPDE with these optimal coefficient vectors as expressed in~\eqref{eq_SPDE_identified} is the identified model.

By the Cauchy--Schwarz inequality and the assumptions on the moment boundedness of features, we can show that if the estimated drift and diffusion coefficients are close to the true ones, then the corresponding values of~\eqref{eq_drift_score} {\color{red}and \eqref{eq_S_diffuse}} are low (see Appendix \ref{proof_lemma_S_diffuse} for the proof). 
\begin{proposition}\label{lemma_S_drift} Suppose $\ba^*=(a^*_1,\cdots, a^*_K)$ and $\bb^*=(b^*_1,\cdots, b^*_J)$ are the true drift  and diffusion coefficient vectors of the underlying SPDE, respectively. Assume that $\sup_{(t,x)\in [0,T]\times \cD} \mathbb E [|F_k(t,x)|^2]<+\infty$ and $\sup_{(t,x)\in [0,T]\times \cD} \mathbb E[ |G_j(t,x)|^2]<+\infty$  for every $k=1,\dots,K$ and $j=1,\dots, J$.  Then there exist constants $C_1,C_2>0$  such that for any $\ba\in\mathbb{R}^K$ and $\bb\in\mathbb{R}^J$, the following inequalities hold:
\begin{equation*}
S_{\drift}(\ba) \leq C_1\|\ba^*-\ba\|_2^2\;,
\end{equation*}
\begin{equation*}
S_{\diffuse}(\bb|\ba)\leq C_2\left((\| \ba\| +\|\ba^*\|)^2(\| \ba-\ba^*\|^2)
    +(\|\bb\| +\|\bb^* \|)^2\|\bb-\bb^*\|^2\right)\;.
\end{equation*}
\end{proposition}
Hence, by comparing the values of~\eqref{eq_drift_score} and \eqref{eq_S_diffuse} associated with different candidates, the smallest  values  indicate the optimal choices.

\section{Identifiability Analysis of Stoch-IDENT}\label{sec_identifiability}

In this section, we establish a general theoretical framework for the identifiability of SPDEs from trajectory data, that is, whether the drift and diffusion operators can in principle be uniquely determined (up to equivalence classes) from the mean and covariance of solution trajectories via the relations~\eqref{eq_drift_identification_relation} and~\eqref{eq_diffuse_identification_relation}. This analysis is theoretically general  and is independent of any specific estimation algorithm or numerical procedure. To remain accessible, We focus on linear SPDEs with constant coefficients, for which sharp and verifiable identifiability conditions can be established. We further assume the driving Wiener process $W$ to be time dependent, though the framework extends naturally to space time dependent processes.

Let  $\mathcal D= \mathbb T^d$ for simplicity, and denote by $\widehat f(\xi),\xi\in \mathbb Z^d$, the Fourier transformation of $f\in L^2(\mathcal D)$: 
\begin{align*}
\widehat f(\xi)=(2\pi)^{-\frac d 2}\int_{\mathcal D} e^{-\bi \xi\cdot x }f(x)dx\;,
\end{align*} 
where $\bi^2=-1$. Applying the Fourier transform to both sides of \eqref{eq_SPDE_template}, we obtain:
\begin{align*}
d \widehat u(\xi)=\sum_{k=1}^Ka_k\widehat F_k(\xi) dt +\sum_{j=1}^Jb_j \widehat{G_j}(\xi) d W (t)\;.
\end{align*}

We assume that the strong solution of \eqref{eq_SPDE_template} exists uniquely on the interval $[0,T]$ for some $T>0$ (see, e.g., \cite[Appendix G]{MR3410409}). That is, there exists a unique $\mathcal F_t$-adapted stochastic process $u$ such that 
\begin{align*}
u(t)=u(0)+\int_{0}^t \sum_{k=1}^Ka_k F_k ds +    \int_{0}^t \sum_{j=1}^J b_j G_j dW(s), \; \text{a.s.}\;,
\end{align*}
where 
the right-hand side is finite for any $t\in [0,T]$. 
This  also requires that  $u(t)$ belongs to the domain of every deterministic  and stochastic feature,  implying that 
\begin{align*}
\mathbb E \Big\|\int_0^T  \sum_{j=1}^J a_j F_j dt\Big\| +\int_0^T \mathbb E \sum_{j=1}^J b_j^2 \|G_j \|^2dt<+\infty\;,
\end{align*}
where $\|\cdot\|$ is the $L^2(\mathcal D)$-norm on the physical domain. Alternatively, we may  consider SPDEs in the Stratonovich sense
\begin{equation}
du = \sum_{k=1}^Ka_kF_k\,dt + \sum_{j=1}^J b_jG_j\circ dW(t)\;, \label{eq_SPDE_template_S}
\end{equation}
which is equivalent to a stochastic equation in the It\^o sense (see e.g., \cite[Chapter 7]{MR4241457}). The advantage of  using \eqref{eq_SPDE_template_S} is that it automatically admits the chain rule. 

\subsection{Linear SPDE identification with constant coefficients}
\label{newsec-linear-spde-identification}

We consider the identification problem related to linear SPDEs with constant coefficients,
and show that the underlying differential operators of SPDEs can be identified if the initial data or diffusion coefficient contains sufficiently rich Fourier modes.

Consider the following linear SPDE, 
\begin{align}\label{linear-spde} 
du=\mathcal L u dt +\mathcal G R \circ dW(t)\;,
\end{align}
where $\mathcal{L} = \sum_{|\alpha|=0}^{p_1} a_\alpha \partial_x^{\alpha}$ is a linear operator with constant coefficients and the total number of summands is $K$. The diffusion operator 
 $\mathcal G=\sum_{|\beta|=0}^{p_2} b_{\beta} \partial_x^{\beta}$ with the number of summands $J$. 
Suppose that either  $R$ is a given real-valued function  (additive noise case), 
or $R=u$ (multiplicative noise case). Here  $\alpha,\beta\in\bbN^d$,  $a_{\alpha}, b_{\beta}\in \mathbb R$, and $p_1,p_2\in \mathbb N$.

Applying the Fourier transform yields the system in frequency space:
\begin{align}\label{fourier-ode}
  d  \widehat u(t, \xi)=(2\pi)^{-\frac d2}\sum_{|\alpha|=0}^{p_1} a_{\alpha}(\bi \xi)^{\alpha} \widehat u(t,\xi) dt+(2\pi)^{-\frac d2}\sum_{|\beta|=0}^{p_2} b_{\beta}(\bi \xi)^{\beta} \widehat R(\xi) \circ d W(t)\;.
\end{align}
When $R$ is independent of $u$, the Stratonovich integral is equivalent to the It\^o's integral \cite{MR4628103}. 
By the Duhamel principle, for any $t_2\ge t_1\ge 0,$
\begin{align}\label{mild-fourier}
\widehat u(t_2, \xi)
&=\widehat u(t_1,\xi) \exp\Big((2\pi)^{-\frac d2}\sum_{|\alpha|=0}^{p_1} a_{\alpha} (\bi \xi)^{\alpha}(t_2-t_1)\Big)\nonumber\\
&+(2\pi)^{-\frac d2} \sum_{|\beta|=0}^{p_2}b_{\beta}(\bi\xi)^{\beta}\int_{t_1}^{t_2}\exp\Big((2\pi)^{-\frac d2}\sum_{|\alpha|=0}^{p_1} a_{\alpha} (\bi \xi)^{\alpha}(t_2-s)\Big) \widehat R(\xi) \circ dW(s)\;.
\end{align}
In the multiplicative noise case, by the chain rule, 
\begin{align}\label{mild-fourier-mul}
 {\widehat u(t_2, \xi)}&=\widehat u(t_1, \xi)     \exp\Big((2\pi)^{-\frac d2}\sum_{|\alpha|=0}^{p_1}a_{\alpha} (\bi \xi)^{\alpha}(t_2-t_1)\Big) \exp\Big((2\pi)^{-\frac d2}\sum_{|\beta|=0}^{p_2} b_{\beta} (\bi \xi)^{\beta}(W(t_2)-W(t_1))\Big)\;.
\end{align}

Our idea of identification is to firstly study the drift terms via the average of sample trajectories. Then we study the diffusion identification through the covariance information of the stochastic integral.

\subsubsection{Drift identification}
As shown in~\eqref{eq_drift_identification_relation}, the identification of the drift terms is based on properties of conditional expectation and polynomial hypersurfaces. The main difference between the multiplicative and additive noise cases is the order in which expectations are taken:   for multiplicative noise, we begin by taking the expectation of \eqref{mild-fourier-mul}, whereas for additive noise, we first take the expectation of \eqref{mild-fourier}. 
Since the proof closely follows \cite[Theorem 3.2]{he2024much}, we do not elaborate on the details here, and put the proof in the Appendix \ref{sec3-appendix}. 
\begin{proposition}\label{prop-drift-identification}
Let $\mathcal Q=\{\xi\in \mathbb Z^d: \widehat u_0(\xi)\neq 0\}$  
and suppose that  
{\small
 $|\mathcal Q|\ge \max \Bigg(\sum_{k=0}^{\lfloor\frac{p_1}{2} \rfloor} \binom{2k+d-1}{d-1} $ $,\sum_{k=0}^{\lfloor\frac {p_1-1}2 \rfloor} \binom{2k+d}{d} \Bigg), \; a.s.
 $} Suppose that $\mathcal Q$ is not located in an algebraic polynomial hypersurface of degree $\le p_1$ consisting of only
even- or only odd-order terms a.s. Then the parameters    $a_{\alpha}, |\alpha|\le p_1$, in \eqref{fourier-ode} are uniquely determined by the  solution at two time points  $u(t_1, \cdot), u(t_2, \cdot)$ when $|t_2-t_1|$ is sufficiently small. 
\end{proposition}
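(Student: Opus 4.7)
The plan is to pass to the Fourier side via \eqref{fourier-ode}, kill the stochastic contribution by an averaging argument, and reduce the identification of $\{a_\alpha\}_{|\alpha|\le p_1}$ to inverting a Vandermonde-type linear system indexed by $\mathcal Q$. Throughout, write $P(\xi):=(2\pi)^{-d/2}\sum_{|\alpha|\le p_1} a_\alpha(\bi\xi)^\alpha$ for the drift symbol.

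First I would eliminate the stochastic term. In the additive case, $R$ is deterministic, so the integrand in \eqref{mild-fourier} is deterministic and its It\^o integral has zero mean; taking expectations gives $\mathbb E[\widehat u(t_j,\xi)]=\widehat u_0(\xi)\,e^{P(\xi)t_j}$ for $j=1,2$. In the multiplicative case, \eqref{mild-fourier-mul} furnishes the factorization $\widehat u(t_2,\xi)/\widehat u(t_1,\xi)=e^{P(\xi)(t_2-t_1)}e^{Q(\xi)(W(t_2)-W(t_1))}$; taking a continuous branch of the logarithm and then expectations kills the zero-mean Brownian increment and leaves $P(\xi)(t_2-t_1)$. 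The condition $\widehat u_0(\xi)\ne 0$ on $\mathcal Q$ a.s.\ keeps these ratios well-defined.

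Next, since $\mathcal Q$ is a.s.\ finite under the stated cardinality bound and $P$ is polynomial in $\xi$, the number $M:=\max_{\xi\in\mathcal Q}|P(\xi)|$ is a.s.\ finite; choosing $|t_2-t_1|<\pi/M$ confines every $P(\xi)(t_2-t_1)$ to the strip $|\mathrm{Im}\,z|<\pi$, so the principal logarithm recovers $P(\xi)$ unambiguously for each $\xi\in\mathcal Q$. Splitting $P(\xi)=\mathrm{Re}\,P(\xi)+\bi\,\mathrm{Im}\,P(\xi)$ then decouples the unknowns: since $(\bi\xi)^\alpha=\bi^{|\alpha|}\xi^\alpha$ is real precisely when $|\alpha|$ is even, $\mathrm{Re}\,P(\xi)$ is a polynomial in $\xi$ involving only even-order monomials $\xi^\alpha$ with $|\alpha|\le p_1$, while $\mathrm{Im}\,P(\xi)$ involves only odd-order ones. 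This produces two real linear systems, one for the even-order coefficients and one for the odd-order coefficients, each with $|\mathcal Q|$ rows and coefficient matrix obtained (up to signs $(-1)^{\lfloor|\alpha|/2\rfloor}$) from the restricted monomial-evaluation matrix $(\xi^\alpha)_{\xi\in\mathcal Q,\alpha}$.

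Finally, unique solvability comes from the algebraic hypothesis. The column rank of each evaluation matrix equals the dimension of the space of polynomials of the prescribed parity and degree $\le p_1$ modulo those vanishing identically on $\mathcal Q$; the assumption that $\mathcal Q$ does not lie in any algebraic hypersurface of degree $\le p_1$ consisting of only even- or only odd-order terms is exactly the statement that this quotient is trivial in each case. Combined with the cardinality lower bound, which supplies at least as many equations as unknowns in each parity class, both systems have full column rank and are uniquely solvable a.s., recovering every $a_\alpha$. The main obstacle I anticipate is this last step, namely translating the geometric ``no hypersurface'' hypothesis into full column rank of the restricted monomial-evaluation matrix in an a.s.\ sense compatible with the randomness of $u_0$ in the multiplicative case. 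A secondary technicality is making the ``$\log$-then-$\mathbb E$'' manipulation rigorous pathwise, which requires a measurable choice of branch; the smallness of $|t_2-t_1|$ is what makes such a choice canonical.
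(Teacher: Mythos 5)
Your proposal follows essentially the same route as the paper's proof: pass to the Fourier side, kill the stochastic contribution by taking expectations of the log-modulus and argument (with $|t_2-t_1|$ small to fix the branch/phase ambiguity), split into even-order and odd-order monomials via the real and imaginary parts of the symbol, and invert the resulting monomial-evaluation systems, whose full rank is exactly what the ``not on a hypersurface of only even- or only odd-order terms'' hypothesis guarantees. The only cosmetic difference is that you spell out the rank argument and the branch-of-logarithm technicality slightly more explicitly than the paper does.
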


\subsubsection{Diffusion   identification}
In~\eqref{eq_diffuse_identification_relation}, the identification of the diffusion terms is based on the covariance information of the corresponding stochastic integral.  Since we do not assume any pathwise information of the stochastic convolution, the diffusion identification is unique only up to an equivalence class  (see the proof of Theorem \ref{diffusion-identification-covariances}). If pathwise information were available, unique identification in the classical sense would be possible. However, the accuracy of the identified coefficients from a single sample trajectory may be low, even if the correct terms in the dictionary are identified.

\begin{theorem}\label{diffusion-identification-covariances} 
Let $a_{\alpha}\in \mathbb R, |\alpha|\le p_1$, be given.
Let $\mathcal Q_1=\{\xi\in\mathbb Z^d: \widehat{R}(\xi)\neq 0\}$ in the additive noise case, and $\mathcal Q_1=\{\xi\in\mathbb Z^d: \mathbb E \widehat u_0(\xi)\neq 0\}$ in the multiplicative noise case. Assume that $|\mathcal Q_1|$ is sufficiently large and that $\mathcal Q_1$ is not located on an algebraic polynomial hypersurface of degree $\le 2p_1$. Then 
 the parameters $b_{\beta}, |\beta|\le p_2$, are uniquely determined, up to an equivalent class, by two instants $u(t_2, \cdot),u(t_1, \cdot)$ with $|t_2-t_1|>0$ sufficiently small.
\end{theorem}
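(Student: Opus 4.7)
The plan is to work mode-by-mode in Fourier space, where the mild formulas \eqref{mild-fourier} and \eqref{mild-fourier-mul} give closed-form expressions for $\widehat u(t_2,\xi)$ involving the two polynomial symbols $\mathcal L(\bi\xi):=(2\pi)^{-d/2}\sum_\alpha a_\alpha(\bi\xi)^\alpha$ and $\mathcal G(\bi\xi):=(2\pi)^{-d/2}\sum_\beta b_\beta(\bi\xi)^\beta$. Because the drift coefficients $a_\alpha$ are now given, the deterministic drift contribution can be subtracted exactly at each $\xi$, isolating a scalar complex-valued stochastic quantity whose covariance depends only on $\mathcal G(\bi\xi)$, on $\widehat R(\xi)$ (or on $\widehat u(t_1,\xi)$), and on a known time factor. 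The overall strategy is then (i) read off a real polynomial of degree $\le 2p_2$ in $\xi$ from these second-order statistics for each $\xi\in\mathcal Q_1$, (ii) reconstruct the polynomial globally using the algebraic non-degeneracy hypothesis on $\mathcal Q_1$, and (iii) invert the quadratic map $\mathcal G\mapsto|\mathcal G|^2$ modulo the sign ambiguity that accounts for the equivalence class in the statement.

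For the additive case, subtracting the known drift exponential from \eqref{mild-fourier} gives, for each $\xi\in\mathcal Q_1$,
\begin{align*}
X(\xi):=\widehat u(t_2,\xi)-\widehat u(t_1,\xi)\,e^{\mathcal L(\bi\xi)(t_2-t_1)}
=(2\pi)^{-\frac d2}\mathcal G(\bi\xi)\,\widehat R(\xi)\int_{t_1}^{t_2}e^{\mathcal L(\bi\xi)(t_2-s)}\,dW(s).
\end{align*}
Because $\widehat R$ is deterministic, the Stratonovich integral coincides with the It\^o one, and the It\^o isometry for complex-valued deterministic integrands against the real Brownian motion $W$ yields $\mathbb E|X(\xi)|^2=(2\pi)^{-d}|\mathcal G(\bi\xi)|^2|\widehat R(\xi)|^2\,C(\xi)$, with the explicit time factor $C(\xi)=\int_{t_1}^{t_2}|e^{\mathcal L(\bi\xi)(t_2-s)}|^2\,ds$, which is positive when $|t_2-t_1|$ is small. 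Dividing by $|\widehat R(\xi)|^2C(\xi)$ then recovers the values of the real polynomial $|\mathcal G(\bi\xi)|^2$ on $\mathcal Q_1$.

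For the multiplicative case, taking log-modulus and argument in \eqref{mild-fourier-mul} gives, at each $\xi$ for which $\widehat u(t_1,\xi)\neq 0$ a.s., affine functions of the Wiener increment,
\begin{align*}
\log\Big|\tfrac{\widehat u(t_2,\xi)}{\widehat u(t_1,\xi)}\Big|=\operatorname{Re}\mathcal L(\bi\xi)(t_2-t_1)+\operatorname{Re}\mathcal G(\bi\xi)(W(t_2)-W(t_1)),
\end{align*}
with the analogous identity for the argument; the smallness of $|t_2-t_1|$ removes the $2\pi$-ambiguity. Taking the variance over $\omega$ annihilates the deterministic pieces and delivers the values $(\operatorname{Re}\mathcal G(\bi\xi))^2(t_2-t_1)$ and $(\operatorname{Im}\mathcal G(\bi\xi))^2(t_2-t_1)$. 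Each of these real polynomials in $\xi$ has degree $\le 2p_2$, so by the hypothesis that $\mathcal Q_1$ is not contained in an algebraic hypersurface of that degree and is sufficiently large, $|\mathcal G(\bi\xi)|^2$, $(\operatorname{Re}\mathcal G(\bi\xi))^2$, and $(\operatorname{Im}\mathcal G(\bi\xi))^2$ are each uniquely reconstructed, and the coefficients $b_\beta$ are then read off by matching monomials up to sign.

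The main obstacle I anticipate is the quadratic inversion step: the polynomial identity $|\mathcal G_1(\bi\xi)|^2=|\mathcal G_2(\bi\xi)|^2$ on $\mathbb Z^d$ admits several real solutions $\mathcal G_2$, and one must verify that these are exactly captured by the equivalence relation alluded to in the statement and not by a strictly larger ambiguity. In the additive case it may be necessary to also compute $\mathbb E[X(\xi)^2]$ by the It\^o isometry, which yields the \emph{complex} polynomial $\mathcal G(\bi\xi)^2\widehat R(\xi)^2$, in order to separate the real and imaginary parts of $\mathcal G$ as in the multiplicative case and to align the two ambiguities. A secondary, more routine point is ensuring that the time factor $C(\xi)$ and the denominators $|\widehat R(\xi)|^2$ and $\widehat u(t_1,\xi)$ are nonzero on $\mathcal Q_1$, which is precisely the role of the definition of $\mathcal Q_1$ and of the smallness of $|t_2-t_1|$.
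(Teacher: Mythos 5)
Your additive-noise argument is essentially the paper's own: apply the It\^o isometry to the drift-corrected increment $X(\xi)=\widehat u(t_2,\xi)-\widehat u(t_1,\xi)e^{\mathcal L(\bi\xi)(t_2-t_1)}$, divide out the explicitly positive factor $|\widehat R(\xi)|^2C(\xi)$, read off the degree-$\le 2p_2$ polynomial $|\mathcal G(\bi\xi)|^2$ on $\mathcal Q_1$, and invert the resulting quadratic system up to the equivalence class. In the multiplicative case, however, you take a genuinely different route. The paper forms the ratio $\mathbb E\widehat u(t_2,\xi)/\mathbb E\widehat u(t_1,\xi)$ and evaluates $\mathbb E\big[\exp\big(\mathcal G(\bi\xi)(W(t_2)-W(t_1))\big)\big]$ via the complex exponential-moment identity $\mathbb E[e^{cW(t)}]=e^{c^2t/2}$, which hands over the single \emph{complex} polynomial $\mathcal G(\bi\xi)^2$ and hence (since $P^2=Q^2$ forces $P=\pm Q$ for polynomials) pins $\mathcal G$ down to a sign. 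You instead exploit that $\log|\widehat u(t_2,\xi)/\widehat u(t_1,\xi)|$ and its argument are affine in the Wiener increment and compute variances, obtaining $(\operatorname{Re}\mathcal G(\bi\xi))^2$ and $(\operatorname{Im}\mathcal G(\bi\xi))^2$ separately. This is a clean second-moment argument that avoids the exponential-moment computation, but it buys slightly less: without also recording the \emph{covariance} of the log-modulus and the argument, which equals $\operatorname{Re}\mathcal G(\bi\xi)\operatorname{Im}\mathcal G(\bi\xi)(t_2-t_1)$ and would recover the cross term, you only determine $\mathcal G$ up to \emph{independent} sign flips of its real and imaginary parts, a strictly coarser ambiguity than the paper's $\pm\mathcal G$ (you flag this, and the covariance is the fix; your proposed remedy of additionally computing $\mathbb E[X(\xi)^2]$ is the right analogue in the additive case). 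A second, hypothesis-level discrepancy: your pathwise ratio requires $\widehat u(t_1,\xi)\neq 0$ almost surely, i.e.\ $\widehat u_0(\xi)\neq 0$ a.s., whereas the theorem's multiplicative $\mathcal Q_1$ only assumes $\mathbb E\widehat u_0(\xi)\neq 0$; the paper's mean-ratio argument is tailored to exactly that weaker condition, so if you keep the variance route you should restate $\mathcal Q_1$ accordingly.
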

\begin{proof}
The main difference between the multiplicative and additive noise cases lies in the approach to obtaining the covariance information. In the multiplicative noise case, we apply It\^o's isometry and exponential moment estimate to \eqref{mild-fourier-mul}, whereas in the additive noise case, we directly use It\^o’s isometry to analyze the covariance of the stochastic integral appearing in 
\eqref{mild-fourier}.
Since the remaining steps are similar, we present the details only for the multiplicative noise case. For completeness, we refer to Appendix \ref{sec3-appendix-add-diffusion} for the proof in the additive noise case.
 
By applying the property of conditional expectation to \eqref{mild-fourier-mul}, and  utilizing the independent increments of Brownian motion,  we obtain
\vspace{-1mm}
\begin{align}\nonumber
 \frac {\mathbb E \widehat u(t_2, \xi)}
{\mathbb E \widehat u(t_1, \xi) }
&=\exp\Big((2\pi)^{-\frac d2}\sum_{|\alpha|=0}^{p_1} a_{\alpha} (\bi \xi)^{\alpha}(t_2-t_1)\Big)\\\nonumber
&\quad \mathbb E \Bigg[\exp\Big((2\pi)^{-\frac d2}\sum_{|\beta|=0}^{p_2} b_{\beta} (\bi \xi)^{\beta}(W(t_2)-W(t_1))\Big)\Bigg]\\\label{use-exp1}
&=\exp\Big((2\pi)^{-\frac d2}\sum_{|\alpha|=0}^{p_1} a_{\alpha} (\bi \xi)^{\alpha}(t_2-t_1)\Big)\\\nonumber
&\quad \exp\Big(\frac 12 (2\pi)^{-d} \sum_{|\beta|,|\widehat \beta|=0}^{p_2} b_{\beta} b_{\widehat\beta} \bi^{|\beta|+|\widehat \beta|} \xi^{\beta+\widehat \beta}
(t_2-t_1)\Big)\;,\nonumber
\end{align}
where we have used the exponential moment of Brownian motion:
\begin{align}\label{prop-exp}
    \mathbb E[e^{cW(t)}]=e^{\frac {c^2 t}2},\; \forall \; c\in \mathbb C,\;
 \end{align}
  in the last step.
  
Considering  polar coordinates of \eqref{use-exp1}, we obtain 
{\small
\begin{align*}
  (2\pi)^{\frac d2} \log  {\Big|}\frac {\mathbb E \widehat u(t_2, \xi)}
{\mathbb E \widehat u(t_1, \xi) }{\Big|}  -\sum_{|\alpha|\; \text{even}} a_{\alpha}(\bi\xi)^{\alpha}(t_2-t_1)&=\frac 12 \sum_{|\beta|+|\widehat \beta|~\text{even}}^{p_2} \bi^{|\beta|+|\widehat \beta|}\xi^{\beta+\widehat \beta} b_{\beta} b_{\widehat \beta}(t_2-t_1)\;,\\
(2\pi)^{\frac d2} \operatorname{Arg} \frac {\mathbb E \widehat u(t_2, \xi)}
{\mathbb E \widehat u(t_1, \xi) } -\sum_{|\alpha|\; \text{odd}} a_{\alpha}(\bi\xi)^{\alpha}\bi^{-1}(t_2-t_1)&= \frac 12 \sum_{|\beta|+|\widehat \beta| ~\text{odd}}^{p_2} \bi^{|\beta|+|\widehat \beta|-1}\xi^{\beta+\widehat \beta}b_{\beta} b_{\widehat \beta} (t_2-t_1)\;.
\end{align*}}By choosing the Fourier modes $\xi_k\in \mathcal Q_1$ with $k=1,\cdots,\widetilde K\ge |\mathcal Q_1|,$ and under the assumption on $\mathcal Q_1$, there exists a unique solution for 
\begin{align*}
\by_{even}=\bA_{even}\bc_{even}, \;\by_{odd}=\bA_{odd}\bc_{odd}\;,
\end{align*}
\vspace{-1mm}
where 
\vspace{-2mm}
\begin{align*}
   (\by_{even})_k= \frac {2(2\pi)^{\frac d2}}{(t_2-t_1)}\log \frac {\mathbb E \widehat u(t_2, \xi_k)}
{\mathbb E \widehat u(t_1, \xi_k) }  -\sum_{|\alpha|\; \text{even}} a_{\alpha}(\bi\xi_k)^{\alpha}, k\le \widetilde K\;,\\
(\by_{odd})_k=\frac {2(2\pi)^{\frac d2}}{(t_2-t_1)} \log \frac {\mathbb E \widehat u(t_2, \xi_k)}
{\mathbb E \widehat u(t_1, \xi_k) }  -\sum_{|\alpha|\; \text{even}} a_{\alpha}(\bi\xi_k)^{\alpha}, k\le \widetilde K\;.
\end{align*}
\vspace{-1mm}
Here $(\bA_{even})_{k\gamma}=\xi^{\gamma}$ and
\vspace{-1mm}
\begin{align}\label{multiplicity-1}
(\bc_{even})_{\gamma}=\sum_{\beta+\widehat \beta =\gamma} \bi^{|\beta|+|\widehat \beta|}b_{\beta}b_{\widehat \beta}  
\end{align}
for even $|\gamma|\le 2p_2$,
and similarly for the odd case. 

Although the vectors $\bc_{even}$ and $\bc_{odd}$ are unique, the vector $b_{\beta}$ is uniquely determined up to an equivalent class of the solutions of \eqref{multiplicity-1} and the corresponding equation for odd indices.  
\end{proof}

Proposition~\ref{prop-drift-identification} and Theorem~\ref{diffusion-identification-covariances} justify the feasibility of the Stoch-IDENT identification framework based on~\eqref{eq_drift_identification_relation} and~\eqref{eq_diffuse_identification_relation}, and characterize the necessary conditions on the observed trajectories for the SPDE to be identifiable. We note that our analysis also provides the theoretical foundation for the data fidelity terms in the sparse regression formulations~\eqref{eq_L0_drift_select_greedy} and~\eqref{eq_L0_diffuse_select}. However, a general identifiability analysis incorporating the additional sparsity constraints would require further assumptions on the spectral properties of the feature matrices, which are difficult to validate in practice and are often not satisfied in the PDE and SPDE setting.

\subsection{Data space spanned by the solution trajectory of  linear SPDE}\label{sec-data-space}

We present interesting findings on the data spaces associated with two types of linear SPDEs: parabolic and hyperbolic equations. To facilitate the analysis, we impose additional conditions on the drift and diffusion operators.
{In this part, we denote $H:=L^2(\mathcal D)$.} It should be mentioned that results for the stochastic Schr\"odinger equation \cite{MR3826675} can also be derived, as it exhibits properties of both parabolic and hyperbolic equations.

\subsubsection{Parabolic SPDE}\label{parabolic}

Assume that $-\mathcal L:H\to H$ is a linear, densely defined, closed operator and is also admissible (see  Definition 2.1 of  \cite{he2024much}), i.e.,
$$\|(z+\mathcal L)^{-1}\|_{H\to H}\le \frac {C}{1+|z|},\; \text{for 
 all} \; z\in \mathbb C/\Sigma_{\delta}\;,$$
where the constant $C>0$ and the sector region
$\Sigma_{\delta}\subset \mathbb C$ is defined by 
$$\Sigma_{\delta}=\{z\in \mathbb C:|\operatorname{Arg}(z)|\le \delta\}, \; \delta\in (0,\frac \pi 2)\;.$$
By \cite[Theorem 1]{MR2231867},  there exists 
an operator $\mathcal A_L$ defined by $\mathcal A_L=\sum_{k=-L}^{L}c_ke^{-z_kt} $ $(z_k+\mathcal L)^{-1}$, $c_k,z_k\in \mathbb C,$ such that 
$
\|e^{\mathcal Lt}-\mathcal A_L(t)\|_{H\to H}=\mathcal O(e^{-cL})
$
uniformly in the time interval $[t_0,\Lambda t_0]$  with $0<c=\mathcal O(1/\log(\Lambda)).$ Here $t_0>0$, $L\in \mathbb N^+$, and $\Lambda>1$ { which is a fixed number independent of $\mathcal L.$}
In particular, for $t\in [t_0,T]$ that $t_0=\epsilon^{\kappa},\kappa>0,$ with  $ \epsilon \in(0,1)$ sufficiently small, one can take $L=C_{\mathcal L}(\kappa)|\log(\epsilon)|^2$ such that
\begin{align}\label{exp-decaying}
\|e^{\mathcal Lt}-\mathcal A_L(t)\|_{H\to H}\lesssim  \epsilon
\end{align}
for some constant $C_{\mathcal L}(\kappa)>0$ (see \cite[Corollary 2.3]{he2024much}).

Suppose that there exists $\mu>0$ such that $-\mathcal L_{\mu}=-\mathcal L+\mu$ is admissible. 
Let $u_0(x)=\sum_{k=1}^\infty c_k\phi_k(x)\in H$ with $c_k\in \mathbb R$ and $\phi_k$ being the orthonormal basis corresponding to the dominant operator $-\mathcal L_{\mu}$. Here $(\lambda_k,\phi_k(x))_{k\ge 1}$ are eigenpairs of $-\mathcal L_{\mu}$ sorted by the real part $\Re\lambda_k$ in an ascending order, including the multiplicity. By the Wely's law, the growing speed satisfies  $\Re\lambda_k=\mathcal O(k^{\frac {p_1}{d}}).$ 
For simplicity,  assume that $\mathcal G$ is a linear closed operator of order $p_2$, and commutes with $\mathcal L$. Denote the eigenvalues of $\mathcal G$ by $(q_k)_{k\ge 1}$, sorted by $\Re q_k$ in an ascending order, including the multiplicity.

Now we present the analysis for  
the data space spanned by the stochastic parabolic equation \eqref{linear-spde}. It can be seen that the solution of \eqref{linear-spde} satisfies \cite{MR1207136}:
\begin{align*}
 u(t, x)=e^{\mu t}\sum_{k=1}^{\infty} c_ke^{-\lambda_k t}\phi_k(x)+ \sum_{k=1}^{\infty} 
 \int_0^t e^{\mu (t-s)} e^{-\lambda_k (t-s)} q_k R_k \phi_k(x) \circ dW(s), \; a.s.
\end{align*} 
Here $R(x)=\sum_{k=1}^{\infty} R_k \phi_k(x)$ and $\mathcal G \phi_k =q_k \phi_k, q_k\in\mathbb C.$ 
In particular, in the multiplicative noise case, we have that 
\begin{align*}
 u(t, x)=e^{\mu t}\sum_{k=1}^{\infty} c_ke^{-\lambda_k t}e^{q_k W(t)}\phi_k(x),~{a.s.}
\end{align*}

\begin{theorem}\label{theorem-diffusion-identification}
Let $\epsilon\in (0,1).$
Suppose that $-\mathcal L+\mu$ is admissible {for some $\mu>0$,} and that $u_0$ is $\mathcal F_0$-measurable  satisfying $|c_k|_{L^2(\Omega;\mathbb R)}\le \theta k^{-\gamma}$ for some $\theta>0,\gamma>\frac 12.$
\begin{itemize}
    \item (Multiplicative noise) Assume that $R=u$ and    $\lim\limits_{k\to\infty} \Re(\lambda_k)-2\Re(q_k)^2>0.$
For any $t\in [0,T],$ there exists a linear space $V\subset H$ of dimension $C_{\mathcal L}|\log(\epsilon)|^2$ such that \begin{align}\label{add-over-err}
   \|u(t)-P_{V}u(t)\|_{L^2(\Omega;H)}\lesssim  \epsilon (1+\|u_0\|_{L^2(\Omega;H)})\;. 
   \end{align}
  
    \item (Additive noise) Assume that $R$ is a given function and  
$\sum_{k=1}^{\infty} \frac {|q_k|^2|R_k|^2}{\Re({\lambda_k})^{1-\theta_1}}<\infty$ for some $\theta_1\in (0,1].$ 
  For any $t\in [0,T]$, there exists a linear space $V\subset H$ of dimension $C_{\mathcal L}|\log(\epsilon)|^2$ such that \eqref{add-over-err} holds.
\end{itemize}
 Here $P_V$ is the projection operator onto $V$.
\end{theorem}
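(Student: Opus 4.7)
The plan is to exploit that $\mathcal{L}$ and $\mathcal{G}$ share the eigenbasis $\{\phi_k\}_{k\ge 1}$ of $-\mathcal{L}_\mu$, so that the mild representations~\eqref{mild-spde-add}--\eqref{mild-spde-mul} diagonalize $u(t)$, and to combine this with the semigroup approximation~\eqref{exp-decaying}. The structural observation driving the construction of $V$ is that every resolvent $(z_j+\mathcal{L}_\mu)^{-1}$ appearing in $\mathcal{A}_L(t)$ preserves each eigenspace, so a single deterministic span $V=\mathrm{span}\{\phi_k:k\le N\}$ is simultaneously invariant under $\mathcal{A}_L(t)$ and under the random diagonal amplification $e^{\mathcal{G}W(t)}$; this lets us absorb both the rational-function approximation and the noise into the same deterministic subspace.

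For the multiplicative case I would start from~\eqref{mild-spde-mul} and take $V=\mathrm{span}\{\phi_k:k\le N\}$. Orthogonality of the $\phi_k$, independence of $c_k$ from $W(t)$ (since $u_0$ is $\mathcal{F}_0$-measurable), and~\eqref{prop-exp} give
\[
\|u(t)-P_Vu(t)\|^2_{L^2(\Omega;H)}=e^{2\mu t}\sum_{k>N}\mathbb{E}|c_k|^2\,e^{-2(\Re\lambda_k-(\Re q_k)^2)t}.
\]
The gap hypothesis forces $\Re\lambda_k-(\Re q_k)^2\ge\tfrac12\Re\lambda_k$ for large $k$, and Weyl's law $\Re\lambda_k\gtrsim k^{p_1/d}$ makes this tail decay like $e^{-cN^{p_1/d}t}$, so $N=\mathcal{O}(|\log\epsilon|^{d/p_1})$ already suffices when $t$ is bounded below. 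To cover the regime $t\ge\epsilon^\kappa$ supplied by~\eqref{exp-decaying} one splits $e^{-\mathcal{L}_\mu t}=e^{-\mathcal{L}_\mu t/2}\,e^{-\mathcal{L}_\mu t/2}$ and absorbs one half into $\Psi_t:=e^{-\mathcal{L}_\mu t/2}e^{\mathcal{G}W(t)}$; under the same spectral gap the operator $\Psi_t$ is bounded $H\to L^2(\Omega;H)$ because $\mathbb{E}|e^{-\lambda_k t/2+q_kW(t)}|^2=e^{-(\Re\lambda_k-2(\Re q_k)^2)t}$ is uniformly bounded in $k$. Applying~\eqref{exp-decaying} to the remaining deterministic half then yields $\|u(t)-e^{\mu t/2}\mathcal{A}_L(t/2)\Psi_t u_0\|_{L^2(\Omega;H)}\lesssim\epsilon\|u_0\|_{L^2(\Omega;H)}$, with the approximant lying in a span of $\mathcal{O}(|\log\epsilon|^2)$ eigenfunctions.

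For the additive case I would split~\eqref{mild-spde-add} as $u(t)=M(t)+Z(t)$ with $M(t)=e^{\mu t}e^{-\mathcal{L}_\mu t}u_0$ and $Z(t)$ the stochastic convolution. The deterministic part $M(t)$ is handled verbatim by~\cite[Corollary 2.3]{MR4827912}. For $Z(t)$, the It\^o isometry (which applies because Stratonovich equals It\^o when $R$ is independent of $u$) gives
\[
\mathbb{E}\|Z(t)-P_VZ(t)\|_H^2=\sum_{k>N}|q_kR_k|^2\!\int_0^t e^{2(\mu-\Re\lambda_k)(t-s)}\,ds\lesssim\sum_{k>N}\frac{|q_k|^2|R_k|^2}{\Re(\lambda_k)^{1-\theta_1}}\,\Re(\lambda_k)^{-\theta_1},
\]
which becomes $\lesssim\epsilon^2$ once $N$ is large enough, by the summability hypothesis. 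Taking a common span of eigenfunctions for $M$ and $Z$ produces~\eqref{add-over-err} with $\dim V=\mathcal{O}(|\log\epsilon|^2)$.

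The main obstacle is the multiplicative step: the random amplification $e^{\mathcal{G}W(t)}$ is unbounded on $H$ whenever $\mathcal{G}$ is unbounded (the factor $e^{q_kW(t)}$ has $L^2(\Omega)$-norm $e^{(\Re q_k)^2 t}$, which grows in $k$), so one cannot directly post-compose it with the operator-norm estimate~\eqref{exp-decaying}. The half-semigroup splitting used above is the natural remedy, and the assumption $\lim_k(\Re\lambda_k-2(\Re q_k)^2)>0$ is exactly the sharp threshold rendering the split composite operator bounded in $L^2(\Omega;H)$; any weaker spectral gap would leave the operator-norm estimate unusable in the stochastic setting.
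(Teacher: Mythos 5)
Your treatment of the large-time regime is essentially the paper's: the splitting $e^{-\mathcal L_\mu t}=e^{-\mathcal L_\mu t/2}e^{-\mathcal L_\mu t/2}$, the observation that $e^{-\mathcal L_\mu t/2}e^{\mathcal G W(t)}$ is bounded $H\to L^2(\Omega;H)$ precisely under $\lim_k(\Re\lambda_k-2\Re(q_k)^2)>0$, and the application of \eqref{exp-decaying} to the remaining deterministic half is exactly how the paper obtains the space $V_2$ of dimension $C|\log\epsilon|^2$ for $t\ge\epsilon^{\kappa}$. However, there is a genuine gap in the small-time regime $t\in[0,\epsilon^{\kappa})$, which neither of your two cases covers. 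Your candidate space $V=\mathrm{span}\{\phi_k:k\le N\}$ gives the tail bound $e^{2\mu t}\sum_{k>N}\mathbb E|c_k|^2e^{-2(\Re\lambda_k-\Re(q_k)^2)t}$, and the exponential factor is useless as $t\to0$: at $t=0$ the error is $\sum_{k>N}\mathbb E|c_k|^2\le\theta^2N^{1-2\gamma}/(2\gamma-1)$, so achieving accuracy $\epsilon$ forces $N\gtrsim\epsilon^{2/(1-2\gamma)}$, i.e.\ polynomially many eigenfunctions, not $O(|\log\epsilon|^2)$. The missing idea is the paper's Taylor-polynomial compression: retain $M_\epsilon\sim\widetilde K+\epsilon^{2/(1-2\gamma)}$ modes to control the Galerkin tail, but on the interval where $\sup_{k\le M_\epsilon}(|\lambda_k-\mu|+2\Re(q_k)^2)\,t\le1$ replace each $e^{-(\lambda_k-\mu)t}$ by its degree-$L_\epsilon$ Taylor polynomial with $L_\epsilon\sim|\log\epsilon|$. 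The resulting approximant $\sum_{k\le M_\epsilon}c_k\sum_{l\le L_\epsilon}(-1)^l\frac{(\lambda_kt-\mu t)^l}{l!}e^{q_kW(t)}\phi_k$ lies in a span of only $L_\epsilon+1$ vectors (each vector aggregating all $M_\epsilon$ modes at fixed Taylor order $l$), with error $\lesssim\theta^2M_\epsilon^{1-2\gamma}e^{-2L_\epsilon+1}$ by the factorial remainder bound. This is what collapses a polynomially large Galerkin space to logarithmic dimension near $t=0$; without it the claimed dimension is not attained.

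The same defect recurs in your additive-noise argument for the stochastic convolution. Your estimate
\begin{equation*}
\mathbb E\|Z(t)-P_VZ(t)\|_H^2\lesssim\sum_{k>N}\frac{|q_k|^2|R_k|^2}{\Re(\lambda_k)^{1-\theta_1}}\,\Re(\lambda_k)^{-\theta_1}
\end{equation*}
decays only like $\Re(\lambda_{N+1})^{-\theta_1}\lesssim N^{-\theta_1p_1/d}$, so ``$N$ large enough'' again means $N\gtrsim\epsilon^{-2d/(\theta_1p_1)}$, polynomial in $1/\epsilon$. The paper instead splits $\int_0^t=\int_0^{t-t_1}+\int_{t-t_1}^t$: on the far piece $t-s\ge t_1$ it applies the rational approximation $\mathcal A_L$ inside the It\^o integral (using the isometry to transfer the operator-norm bound), and on the near-diagonal piece it uses the Taylor expansion of $e^{-(\lambda_k-\mu)(t-s)}$, with the factor $t_1^{\theta_1}$ and the hypothesis $\sum_k|q_k|^2|R_k|^2/\Re(\lambda_k)^{1-\theta_1}<\infty$ absorbing the remainder. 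Finally, a cosmetic point: the approximant $\mathcal A_L(t/2)\Psi_tu_0$ lies in the span of the $2L+1$ resolvent images $(z_j+\mathcal L)^{-1}\Psi_tu_0$, not in a span of eigenfunctions; the dimension count is unaffected but the description should be corrected.
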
    

\begin{proof}

We provide detailed derivations for \eqref{add-over-err}  in the case of multiplicative noise; the additive noise case is similar and thus omitted. For completeness, we put the proof in the additive noise case in Appendix \ref{sec-appendix-add-data}.

Let $u_{\widetilde M}(t, x)=e^{\mu t}\sum_{k=1}^{\widetilde M}c_ke^{-\lambda_k t}e^{q_kW(t)}\phi_k(x)$ denote the Galerkin approximation  with $\widetilde M\in \mathbb N^+.$
Then we have that 
\begin{align*}
&\|u_{\widetilde M}(t, \cdot)-u(t, \cdot)\|_{H}^2
=  e^{2\mu t} \sum_{k=\widetilde M+1}^\infty c_k^2 e^{-2\Re(\lambda_{k})t}e^{2\Re(q_{k})W(t)}.
\end{align*}
  Since $\lim\limits_{k\to\infty} [\Re(\lambda_k)-2\Re(q_k)^2]>0,$ there exists $\widetilde K\in \mathbb N^+$ such that $\mu- \Re(\lambda_{k})+ \Re(q_{k})^2<0$ for $k\ge \widetilde K$.  
  This, together with the property of conditional expectation and \eqref{prop-exp}, yield that for $\widetilde M\ge \widetilde K,$
 \begin{align}\label{spe-trun-error}
 \mathbb E \|u_{\widetilde M}(t, \cdot)-u(t, \cdot)\|_{H}^2
 &= \sum_{k=\widetilde M+1}^\infty \mathbb E [c_k^2] e^{2\mu t -2\Re(\lambda_{k})t+2\Re(q_{k})^2t}\le \theta^2 \frac {\widetilde M^{1-2\gamma}}{2\gamma-1}.
 \end{align}

Next, for any $\epsilon \in (0,1),$ let $M_{\epsilon}\ge \widetilde K,L_{\epsilon}>0$ to be determined later,  and define 
$$w_{\epsilon}
=\sum_{k=1}^{M_{\epsilon}}c_k\sum_{l=0}^{L_{\epsilon}} (-1)^l \frac {(\lambda_kt-\mu t)^l}{l!}e^{q_k W(t)}\phi_k(x).$$
Then for each $t$, $w_{\epsilon}$ sits in the linear space 
$$V_1=\text{span} \Big\{\sum_{k=1}^{M_{\epsilon}}c_k(-1)^l\frac {(\lambda_kt-\mu t)^l}{l!}e^{q_k W(t)}\phi_k(x): l=0,1\cdots,L_{\epsilon}\Big\}.$$
Now we are in a position to deal with the case that $\sup\limits_{k\le M_{\epsilon}}[|\lambda_k-\mu|+2\Re(q_k)^2]t\le 1.$ 
By Minkowski's inequality and the independent increment property of $W(\cdot)$, we have 
\begin{align*}
    \|u_{M_\epsilon}(t, \cdot)-w_{\epsilon}(t, \cdot)\|_{L^2(\Omega;H)}^2
    &\le \sum_{k=1}^{M_{\epsilon}}\mathbb E [|c_k|^2]\|e^{q_k W(t)}\|_{L^2(\Omega;\mathbb R)}^2
\Big|\sum_{l=L_{\epsilon}+1}^{\infty}(-1)^l\frac {(\lambda_kt-\mu t)^l}{l!}\Big|^2\\
    &\le \sum_{k=1}^{M_{\epsilon}}\mathbb E[|c_k|^2]\|e^{q_kW(t)}\|_{L^2(\Omega;\mathbb R)}^2\frac 1{[(L_{\epsilon}+1)!]^2}.
    \end{align*}
 According to \eqref{prop-exp}, for $t\in [0,\inf\limits_{k\le M_{\epsilon}}\frac 1{{|\lambda_{k}}-\mu|+2\Re(q_{k})^2}]$, we have
\begin{align*}
     \|u_{M_\epsilon}(t, \cdot)-w_{\epsilon}(t, \cdot)\|_{L^2(\Omega;H)}^2
     &\le \sum_{k=1}^{M_{\epsilon}}\mathbb E [|c_k|^2]e^{2\Re(q_k)^2t}\frac 1{[(L_{\epsilon}+1)!]^2}\le \theta^2 \frac {M_{\epsilon}^{1-2\gamma}}{2\gamma-1} e^{-2L_\epsilon+1}. 
\end{align*}
Letting $L_{\epsilon}=\frac 12|\log(\epsilon)|$ and $M_{\epsilon}=\widetilde K+\epsilon^{\frac 2{1-2\gamma}}$ with $\epsilon\in (0,1)$, and using \eqref{spe-trun-error}, we have  
  \begin{align*}
     \|u(t, \cdot)-w_{\epsilon}(t, \cdot)\|_{L^2(\Omega;H)}&\le \|u(t, \cdot)-u_{M_\epsilon}(t, \cdot)\|_{L^2(\Omega;H)}+ \|u_{M_\epsilon}(t, \cdot)-w_{\epsilon}(t, \cdot)\|_{L^2(\Omega;H)}\\
     &\lesssim  \epsilon (1+\|u_0\|_{L^2(\Omega;H)}). 
    \end{align*}

For  $t\in [t_0,T]$ with $t_0$ being $\inf\limits_{k\le M_{\epsilon}}\frac 1{|\lambda_{k}-\mu|+2\Re(q_{k})^2}$, by \eqref{exp-decaying}, there exists 
$\mathcal A_L$ approximating $e^{\frac {\mathcal L}2 t}$ such that $\|e^{-\frac {\mathcal L t}2}-\mathcal A_L\|_{H\to H}\lesssim \epsilon$, where $t\in [t_0,T].$ Here $t_0=\epsilon^{\kappa}$,  $L=C_{\frac {\mathcal {L}}2}(\kappa)|\log \epsilon|^2$ and 
 $\kappa=|\sup\limits_{k\le M_{\epsilon}} \log(|\lambda_{k}-\mu|+2\Re(q_{k})^2)|/|\log(\epsilon)|.$
Consequently, by~\eqref{prop-exp} and using the assumption that $\lim\limits_{k\to\infty}[\Re(\lambda_k)-2\Re(q_k)^2]>0$, we have  
\begin{align*}
   \|(e^{-\frac {\mathcal L} 2 t }- \mathcal A_L)e^{\frac {\mathcal L}2 t+\mathcal G W(t)}u_0\|^2_{L^2(\Omega;H)} 
   &\lesssim \epsilon^2 \mathbb E \|e^{\frac {\mathcal L}2 t+\mathcal G W(t)}u_0\|^2_{H}\\
   &\lesssim \epsilon^2
   \sum_{k} e^{-[\Re(\lambda_{k})-\mu]t}e^{2 \Re(q_{k})^2 t} \mathbb E [c_k^2] 
   \lesssim \epsilon^2\mathbb E [\|u_0\|_H^2].
\end{align*}
Therefore, for $t\in  [t_0,$ $T]$, there exists a linear space $V_2$ of dimension $L=C_{\frac {\mathcal L}2}(\kappa)|\log \epsilon|^2$
such that 
$$\|u(t, \cdot)-P_{V_2}u(t, \cdot)\|_{L^2(\Omega;H)}\lesssim \epsilon (1+\|u_0\|_{L^2(\Omega;H)}).$$
Taking $V$ as the linear space containing $V_1\cup V_2$, we complete the proof for  $t\in [0,T]$.
\end{proof}

\subsubsection{Hyperbolic  SPDE}\label{hyperbolic}
In this part, we study the behavior of solution trajectories for 
hyperbolic SPDEs, which is intrinsically different from the parabolic case.
Consider the following stochastic transport equation in the Stratonovich sense:
\begin{align}\label{hyper}
  &d u(t, x)+c(x)\cdot \nabla u(t,x) \circ dW(t)=0,\\\nonumber 
  &u(0, x)=u_0(x),
\end{align}
where $x\in \mathcal D$, $t\in [0,T],$ and $c(\cdot)$ is vector-valued.  Its equivalent It\^o's form is: 
\begin{align*}
 d u(t,x)+c(x)\cdot \nabla u(t,x) dB(t) 
  &=\frac 12  c(x)\cdot \nabla (c(x)\cdot \nabla u(t,x))dt.
\end{align*}
Thanks to  the corresponding particle formulation: 
\begin{align}\label{particle-sys}
 d X(t)=-c(X(t))\circ dW(t), X(0)=x,
\end{align}
the solution of \eqref{hyper} can be understood via the characteristic line method: 
$$u(X_{0,t}(x),t)=u_0(x).$$
Here $X_{0,t}(x)$ denotes the solution of \eqref{particle-sys} starting at time $0$ with initial state $x$ and ending at time $t$. 
If the characteristic lines do not intersect,  $u(y,t)=u_0(X_{t,0}(y)))$.

In this case, we can define the following two correlation functions in space and time in the average sense:
\begin{align*}
 K(x,y)=\int_0^T \mathbb E \left[u(s,x)u(s,y)\right]ds,\;G(s,t)=\int_{\mathcal D} \mathbb E \left[u(t,x)u(s,x)\right]dx,
\end{align*}
where $x,y\in \mathcal D, s,t\in [0,T].$ Note that $K(\cdot,\cdot)$ and $G(\cdot,\cdot)$ are the kernels of symmetric semi-positive compact integral
operators on $H$ and $L^2([0,T]),$ respectively.

\begin{proposition}\label{hyper-dimension}
 Let $c(\cdot)\in \mathcal C^{p+1}(\mathcal D;\mathbb R^d)$  ($p\in \mathbb N$) be a velocity field and $u_0\in \mathcal C^{p}(\mathcal D).$ Then there exists a subspace $V$ of dimension $o(\epsilon^{-\frac 2{p}})$ such that 
 \begin{align*}
    \|P_V u-u\|_{L^2(\Omega;L^2([0,T];H))}\lesssim \epsilon.
 \end{align*}
\end{proposition}
\begin{proof}

Notice that 
the Jacobian matrix $Y(t)=\frac {\partial X(t)}{\partial x}$  satisfies 
\begin{align*}
 Y(t)=I-  \int_0^t \frac {\partial}{\partial x} c(X(t)) Y(t) \circ d W(t),
\end{align*}
and that its inverse matrix $Z(t)$ satisfies 
\begin{align*}
 Z(t)=I+  \int_0^t  Z(u) \frac {\partial}{\partial x} c(X(t))  \circ d W(t).
\end{align*}
Thus, by the chain rule, $G(s,t)$ is differentiable with respect to $x$.
Since $c(\cdot)\in \mathcal C^{p+1}(\mathcal D;\mathbb R^d)$ and $\mathcal D$ is a compact set, by \cite[Corollary 4.6.7]{MR1070361} (see also \cite[Chapter 1, section 1.1]{MR4628103}), we have that 
$X_{s,t}(\cdot)\in \mathcal C^{p}(\mathcal D;\mathbb R^d)$ for any $s\le t\in [0,T].$ This also implies that $G(s,t)\in $  $\mathcal C^{p}([0,T]^2)$
, since $c^{(k)}(\cdot)$ for $0\leq k\leq p$ are bounded thanks to the compactness of $\mathcal D$.

Notice that the eigenvalues of $K$ and $G$ are non-negative satisfying $\lambda_1\ge \lambda_2\ge \cdots\ge \lambda_j\ge \cdots$ with $\lambda_j\to 0$ as $j\to \infty.$
 The differentiability of $G(s,t)$, i.e., $G(s,t)\in \mathcal C^{p}([0,T]^2)$ leads to 
$\lambda_j= o(j^{-(p+1)})$ (see, e.g., \cite{MR728689}). Define $V_{K}^k$ and $V_{G}^k$ ($k\in \mathbb N^+$) as the linear spaces spanned by the first $k$ leading eigenfunctions of $K(x,y)$ and $G(s,t)$, respectively. It holds that 
\begin{align*}
&\int_0^T \mathbb E \|u(t, \cdot)-P_{V_K^k}u(t, \cdot)\|_{L^2(\mathcal D)}^2dt=\int_{\mathcal D}\mathbb E \|u(\cdot,x)-P_{V_G^k}u(\cdot,x)\|_{L^2([0,T])}^2dx=\sum_{j=k+1}^{\infty}\lambda_j.
\end{align*}
This, together with $\lambda_j=o(j^{-(p+1)}),$ yields that 
\begin{align*}
\sum_{j=k+1}^{\infty}\lambda_j=o(k^{-p}),
\end{align*}
which completes the proof.
\end{proof}

Theorem~\ref{theorem-diffusion-identification} and Proposition~\ref{hyper-dimension} quantify the effective dimension of the space spanned by solution trajectories of parabolic versus hyperbolic SPDEs. Parabolic equations lead to solution trajectories lying in a low-dimensional, highly compressed subspace (dimension $\mathcal{O}(|\log(\epsilon)|^2)$ at accuracy $\epsilon$), while hyperbolic SPDEs span a richer subspace with polynomial-type dimension. See Section~\ref{sec_parabolic_vs_hyperbolic} for a numerical illustration. 
These properties are intrinsic to the SPDE solution trajectories and is independent of any particular identification algorithm. These results generalize the deterministic versions analyzed in~\cite{he2024much}.

\section{Proposed Algorithms and Implementation Details for Stoch-IDENT}\label{sec_algorithm}
We now describe the algorithmic components of Stoch-IDENT. Algorithm~\ref{alg:stoch-ident} presents a high-level overview of the full pipeline. As discussed in Section~\ref{sec_candidate_drift}, the candidate generation for drift terms follows the same approach as identifying constant-coefficient PDEs. Although~\eqref{eq_L0_drift_select_greedy} is NP-hard~\cite{nguyen2019np}, Subspace Pursuit (SP)~\cite{dai2009subspace} was found to be effective in finding the true supports~\cite{he2022robust,tang2023weakident}. {Moreover, under certain structural conditions on the feature matrix $\bF$, SP is shown to achieve exact sparse recovery and is robust when the data are perturbed~\cite[Theorems~1~and~9]{dai2009subspace}.} We adopt the same algorithm in this work for generating candidate drift models and refer the readers to the aforementioned works for details. In the following, we focus on the algorithmic components specific to SPDE identification.

\begin{algorithm}[t]
\caption{{Stoch-IDENT algorithm}}
\label{alg:stoch-ident}
\KwIn{Dataset $\{U_n\}_{n=1}^N$, drift dictionary $\mathcal{F}$ with $K$ features, diffusion dictionary $\mathcal{G}$ with $J$ features, significance threshold $p^*$}
\KwOut{Identified drift model $\widehat{\ba}$, diffusion model $\widehat{\bb}$}

\textbf{Step 1: Drift Identification}\\
\For{$k = 1, \dots, K$}{
    Solve~\eqref{eq_L0_drift_select_greedy} with sparsity level $k$ to obtain candidate drift model with coefficients $\widehat{\ba}^{(k)}$\;
}
Select the optimal candidate $\widehat{\ba}$ from $\{\widehat{\ba}^{(k)}\}_{k=1}^K$ which minimizes~\eqref{eq_drift_score}\;
Compute drift residuals $\mathcal{R}^n := \{\rho_i^n(\widehat{\ba}),\, i = 1,\dots,I\}$ for $n = 1,\dots,N$\;

\textbf{Step 2: Pure Additive Noise Detection}\\
Test each $\mathcal{R}^n$ for Gaussianity and aggregate $p$-values via Stouffer's method\;
\If{combined $p$-value $> p^*$}{
    Estimate $\widehat{\sigma}$ via sample standard deviation of $\mathcal{R}^n$\;
    Set $\widehat{\bb} = \widehat{\sigma}$ and \Return\;
}

\textbf{Step 3: General Diffusion Identification}\\
Compute diffuse response $\{\zeta_i(\widehat{\ba})\}_{i=1}^I$ from~\eqref{eq_diffuse_response}\;
\For{$j = 1, \dots, J$}{
    Solve~\eqref{eq_L0_diffuse_select} to obtain candidate diffusion model $\widehat{\bb}^{(j)}$\;
}
Select the optimal candidate $\widehat{\bb}$ from $\{\widehat{\bb}^{(j)}\}_{j=1}^J$ which minimizes~\eqref{eq_S_diffuse}\;
\end{algorithm}

\subsection{Problem reformulation by sample average approximation}\label{sec_mean_approximation}

As the mean values in the feature systems introduced in Section~\ref{sec_feature_system} are not available, we approximate them by their respective sample averages:\vspace{-1mm}
\begin{equation}\label{eq_sample_mean_drift}
\bbE[F_k(t,x)]\approx \frac{1}{N}\sum_{n=1}^N F_k^n(t,x)\;,~\text{and}~\bbE[u(t,x)] \approx \frac{1}{N}\sum_{n=1}^NU_n(t,x)\;,
\end{equation}
for $k=1,\dots, K$ and any $(t,x)\in [0,T]\times\cD$, where $F_k^n(t,x):=F_k(t,x,\omega_n)$  for $\omega_n\in\Omega$. Define $\bF^N$ and $\by^N$ as the $N$ sample mean approximations of  $\bF$~\eqref{eq_drift_feature_matrix} and $\by$~\eqref{eq_drift_feature_response} respectively by replacing their entries with the corresponding estimators~\eqref{eq_sample_mean_drift}. The expectations in the diffusion feature matrix~\eqref{eq_diffuse_matrix} and the diffusion responses~\eqref{eq_diffuse_response} are also approximated by sample averages. Denoting $G_j^n(t,x):={G_j}(t,x,\omega_n)$, we use
\begin{equation*}
\bbE\left[\mathlarger{\fint}_{\cD}G_s(t_i,y)\cdot G_j(t_i,y)\,dy\right]\approx \frac{1}{N}\sum_{n=1}^N \mathlarger{\fint}_{\cD}G^n_s(t_i,y)\cdot G^n_j(t_i,y)\,dy\;,
\end{equation*}
for any $s,j\in\{1,\dots,J\}$ in~\eqref{eq_diffuse_matrix} and denote  $\bG_{i}^N$ as the resulting matrix for $i=1,\dots,I$.  Given   an estimated drift coefficient vector $\widehat{\ba}=(\widehat{a}_1,\dots,\widehat{a}_K)$, we use
\begin{equation*}
\zeta_i\approx\zeta_i^N(\widehat{\ba}):=\frac{1}{N}\sum_{n=1}^N \mathlarger{\fint}_{\cD}\left(r_i^n(y, \widehat{\ba})\right)^2\,dy\;,
\end{equation*}
where  $r^n_i(x,\widehat{\ba}) := U_n(t_{i},x)-U_n(t_{i-1},x)-\Delta t \sum_{k=1}^K \widehat{a}_k F^n_k(t_{i-1},x)$, and $\Delta t>0$ is the interval size of the grid in the time dimension.  For~\eqref{eq_L0_drift_select_greedy},  we obtain 
a surrogate:
\vspace{-1mm}
\begin{equation}\label{eq_L0_drift_select_sample}
\begin{aligned}
\min_{\bc\in\mathbb{R}^K}&\|\bF^N\bc - \by^N\|_2^2\\
\text{s.t.}&~\|\bc\|_0=k
\end{aligned}\;,
\end{equation}
for $k=1,\dots,K$. Suppose $\widehat{\ba}^N$ is an estimated diffusion coefficient vector, then we consider the following surrogate of~\eqref{eq_L0_diffuse_select} 
\vspace{-1mm}
\begin{equation}\label{eq_L0_diffuse_select_sample}
\begin{aligned}
\min_{\bc\in\mathbb{R}^J}&\sum_{i=1}^I\left(\bc^\top\bG^N_i\bc - \zeta_i^N(\widehat{\ba}^N)\right)^2\\
\text{s.t.}&~\|\bc\|_0=j
\end{aligned}\;,
\end{equation}
for $j=1,\dots,J$. As the feature systems are approximated via sample means, the identified drift and diffusion coefficients are random variables. 

The following result shows  that as the number of sample paths $N\to\infty$, any convergent sequence of  minimizers of the sample average approximation~\eqref{eq_L0_drift_select_sample} converges to a minimizer of~\eqref{eq_L0_drift_select_greedy} almost surely.

\begin{theorem}\label{eq_convergence_theorem}Denote $f_O(\bc):=\|\bF\bc-\by\|_2^2$ and $f_N(\bc):=\|\bF^N\bc-\by^N\|_2^2$. Assume that $\bc^*\in\mathbb{R}^K$ with $\|\bc^*\|_0=k$ is a  local minimizer of~\eqref{eq_L0_drift_select_greedy}, i.e., there exists some $\varepsilon>0$ such that $f_O(\bc^*)\leq f_O(\bc)$ for any $\bc$ with $\|\bc-\bc^*\|_2<\varepsilon$. Then the problem~\eqref{eq_L0_drift_select_sample} converges to~\eqref{eq_L0_drift_select_greedy} almost surely in the following sense:
\begin{enumerate}
\item  The optimal value of~\eqref{eq_L0_drift_select_greedy} converges almost surely to that of~\eqref{eq_L0_drift_select_sample}.
\item Let $\Psi_N:=\arg\min_{\bc\in\mathbb{R}^K}\{f_N(\bc): \|\bc\|_0=k\}$ and $\Psi_O:=\arg\min_{\bc\in\mathbb{R}^K}\{f_O(\bc): \|\bc\|_0=k\}$, then
\vspace{-1mm}
\begin{equation}\label{eq_Psi_def}
\limsup_{N\to\infty} \Psi_N \subset \Psi_O~a.s.
\end{equation}
\end{enumerate}
where $\limsup$ is understood in the Kuratowski-Mosco sense~\cite{salinetti1981convergence}:
\vspace{-2mm}
\begin{equation*}
\limsup_{N\to\infty} X_N:=\{x\in\mathbb{R}^K:\exists (x_{N_k})_{k=1}^\infty~\text{with}~\lim_{k\to\infty}x_{N_k}=x~\text{and}~x_{N_k}\in X_{N_k}\;,\forall k\in\mathbb{N}\}\;.
\end{equation*}
\end{theorem}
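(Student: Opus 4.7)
The overall strategy is to view the problem as a sample average approximation (SAA) of a $\ell_0$-constrained quadratic program, and to combine an entrywise strong law of large numbers with the finite-support structure induced by the constraint $\|\mathbf{c}\|_0=k$. The first step is to verify that $\mathbf{F}^N \to \mathbf{F}$ and $\mathbf{y}^N \to \mathbf{y}$ entrywise almost surely. Each entry of $\mathbf{F}^N$ (respectively $\mathbf{y}^N$) is a sample average of the form $N^{-1}\sum_{n=1}^N F_k^n(t_i,x_m)$ built from i.i.d.\ samples $\{\omega_n\}$ drawn from $\Omega$, so Kolmogorov's SLLN applies under a mild integrability assumption (which I would make explicit). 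Because $\mathbf{F}^N,\mathbf{y}^N$ live in finite dimensions and $f_N(\mathbf{c})=\|\mathbf{F}^N\mathbf{c}-\mathbf{y}^N\|_2^2$ is a polynomial of fixed degree in these entries, this immediately upgrades to uniform convergence $f_N \to f_O$ on every bounded subset of $\mathbb{R}^K$, almost surely.

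For part 1, the key observation is that the $\ell_0$-constraint partitions the feasible set into finitely many (exactly $\binom{K}{k}$) linear subspaces $\{\mathbf{c}:\operatorname{supp}(\mathbf{c})\subset S,\ |S|=k\}$. On each support $S$, both $f_N$ and $f_O$ reduce to strictly convex (or at worst convex) quadratic least-squares problems whose optimal values depend continuously on $(\mathbf{F}^N,\mathbf{y}^N)$. Since SLLN gives convergence of the data matrices almost surely, the optimal value on each fixed support converges a.s., and then taking the minimum over the finite collection of supports shows that the global optimal values satisfy $\min f_N \to \min f_O$ almost surely. This establishes the convergence of optimal values.

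For part 2, suppose $\mathbf{c}_{N_j}\in\Psi_{N_j}$ with $\mathbf{c}_{N_j}\to\mathbf{c}^\star$. Since each $\mathbf{c}_{N_j}$ has support of size $k$ and there are only finitely many such supports, by the pigeonhole principle I pass to a further subsequence on which all $\mathbf{c}_{N_j}$ share a common support $S^\star$, whence $\operatorname{supp}(\mathbf{c}^\star)\subset S^\star$. On this fixed support, $f_O$ restricted to the subspace indexed by $S^\star$ is continuous and the minimizer depends continuously on the data (under a full column-rank condition on $[\mathbf{F}]_{S^\star}$, which I would invoke generically). Combining with part 1, $f_O(\mathbf{c}^\star)=\lim f_{N_j}(\mathbf{c}_{N_j})=\min f_O$. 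To certify $\mathbf{c}^\star\in\Psi_O$, I still need $\|\mathbf{c}^\star\|_0=k$, and this is where the local-minimizer hypothesis on $\mathbf{c}^\star$ of the original problem is leveraged: near a local minimizer with exactly $k$ nonzero entries of bounded-away-from-zero magnitude, the restricted problems on supports $S'\subsetneq S^\star$ cannot achieve the same optimal value, which prevents the limit from having strictly smaller support.

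The main obstacle I anticipate is handling the equality constraint $\|\mathbf{c}\|_0=k$ precisely in the set-convergence statement: lower semicontinuity of $\|\cdot\|_0$ only guarantees $\|\mathbf{c}^\star\|_0\leq k$, and one must rule out the degenerate case of support collapse in the limit. The cleanest framework is epi-convergence (in the sense of Attouch-Wets), which turns the uniform convergence $f_N\to f_O$ together with the (trivial) closedness of each support-subspace into Kuratowski-Mosco convergence of $\arg\min$-sets; but to exclude spurious limits with smaller support, the local optimality of $\mathbf{c}^\star$ in the constrained problem must be combined with a strict-separation argument between the optimal values achieved on supports of size $k$ and on strictly smaller supports.
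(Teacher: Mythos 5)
Your route is genuinely different from the paper's. The paper disposes of this theorem in three lines by citing Vogel's stability theory for stochastic programs: pointwise a.s.\ convergence $f_N\to f_O$ from the SLLN, then Theorem 5.1 of that reference (using the local-minimizer hypothesis) to obtain lower semicontinuous convergence of $f_N$ to $f_O$ on the constraint set, then Theorem 4.1 to import both conclusions. You instead attempt a self-contained argument via the decomposition of $\{\bc:\|\bc\|_0=k\}$ into the $\binom{K}{k}$ supports, which is a sound and more transparent idea. Your part 1 is essentially right, modulo one caveat you gloss over: the optimal value of the least-squares problem on a fixed support $S$ equals $\operatorname{dist}(\by,\operatorname{range}(\bF_S))^2$ (with $\bF_S$ the column submatrix), and this is continuous in the data only where $\bF_S$ has locally constant rank; at rank-deficient $\bF_S$ it is merely upper semicontinuous (the sample optimal values can undershoot in the limit), so the claim $\min f_N\to\min f_O$ already needs the full-column-rank condition that you invoke only later and only for $S^\star$.

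The genuine gap is in part 2, and it is precisely the obstacle you flag in your last paragraph without closing it. To certify $\bc^\star\in\Psi_O$ you must show $\|\bc^\star\|_0=k$, and you propose to ``leverage the local-minimizer hypothesis on $\bc^\star$.'' But the theorem's hypothesis concerns a single, fixed point $\bc^*$ assumed in advance to be a local minimizer of $f_O$; it says nothing about an arbitrary limit point $\bc^\star$ of a convergent subsequence drawn from the sets $\Psi_{N_j}$, and you give no argument identifying the two. Moreover, even if $\bc^\star$ were itself an unconstrained local minimizer of $f_O$ with exactly $k$ nonzero entries, that would not yield the strict separation you say you need between the optimal values attained on size-$k$ supports and on strictly smaller supports (consider $f_O$ constant along a coordinate subspace). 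So the support-collapse case is named but not excluded, and part 2 remains open in your write-up. In the paper this is exactly where the local-minimizer hypothesis is consumed: it feeds the lower-semicontinuous-convergence condition of Vogel's Theorem 5.1, and the set inclusion \eqref{eq_Psi_def} is then taken wholesale from Vogel's Theorem 4.1 rather than derived by a pigeonhole-plus-continuity argument. If you wish to keep your elementary route, you would need either to replace $\Psi_O$ by the $\argmin$ over the closed set $\{\bc:\|\bc\|_0\le k\}$, or to add a hypothesis guaranteeing that the constrained optimum is attained only at points with exactly $k$ nonzero entries.
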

\begin{proof} This   follows from a series of results in~\cite{vogel1994stochastic}. From the strong law of large numbers, $f_N(\bc)$ converges to $f_O(\bc)$ almost surely for any $\bc\in\mathbb{R}^K$. Since $\bc^*$  is a local minimizer of $f_O$, by Theorem 5.1 of~\cite{vogel1994stochastic}, $f_N$ is lower semi-continuously convergent almost surely  to $f_O$ on the set $C_k := \{\bc:\|\bc\|=k\}$ (See Definition 2.6~\cite{vogel1994stochastic}). Hence, the conditions in Theorem 4.1 of~\cite{vogel1994stochastic} are satisfied, which implies the conclusions.
\end{proof}
Following~\cite{vogel1994stochastic}, the analogous results with convergence in probability also hold. Based on the almost surely convergent subsequence of minimizers specified in~\eqref{eq_Psi_def}, the asymptotic behavior of the surrogate~\eqref{eq_L0_diffuse_select_sample} can be similarly deduced.

{\begin{remark}\label{remark_asymp_condition}
 The condition that there exists a $k$-sparse solution to~\eqref{eq_L0_drift_select_greedy} which is also a local minimizer can be guaranteed by sufficient conditions involving Spark~\cite{donoho2003optimally} or Restricted  Isometry Property (RIP)~\cite{candes2005decoding} of   $\bF$; see~\cite{beck2013sparsity} also. It should be noted that Theorem \ref{eq_convergence_theorem} is about the asymptotic convergence of sample‑average approximations of the objective. 
While  covariance estimation in general requires  sample size scaled quadratically with the dimension, our setting  exploits sparsity which significantly reduces the required sample size. This is consistent with the sparse recovery literature~\cite{candes2005decoding,donoho2003optimally}.
\end{remark}
}

\subsection{Statistical tests for detecting pure additive noise}\label{sec_stat_detect}

We note that if the underlying SPDE has only additive noise, the identification of the diffusion model reduces to a single parameter estimation problem once the drift model is selected, and no further model selection is needed. It would thus be computationally beneficial to detect whether a SPDE model with additive noise alone indeed fits the observed dynamics. For this purpose, we devise a statistical testing strategy.

Let $\widehat{\ba}=(\widehat{a}_1,\dots,\widehat{a}_K)$ be the estimated drift coefficient vector obtained from Section~\ref{sec_candidate_drift}. We define the space averaged residual error for the $n$-th trajectory
\vspace{-2mm}
\begin{equation*}
 \rho^n_i(\widehat{\ba}) := \mathlarger{\fint}_{\cD}\left(U_n(t_{i},x)-U_n(t_{i-1},x)-\Delta t \sum_{k=1}^K \widehat{a}_k F^n_k(t_{i-1},x)\right)\,dx\;,
\end{equation*}
for $i=1,\dots,I$ and $n=1,\dots,N$. For an SPDE with additive noise $\sigma dW$, if $\widehat{\ba}$ is accurate, the residual data $\mathcal{R}^n:=\{\rho_i^n(\widehat{\ba}),\, i=1,\dots,I\}$ should be approximately distributed as $\mathcal{N}(0, \sigma^2\Delta t)$ for $n=1,\dots,N$. We test each $\mathcal{R}^n$ for Gaussianity using the D'Agostino--Pearson test~\cite{d1973tests}, and the resulting $N$ $p$-values are then aggregated via Stouffer's method~\cite{stouffer1949american} to control the false positive rate. If the combined $p$-value is smaller than some threshold $p^*>0$, we deem the noise to be multiplicative and proceed with the diffusion identification in Section~\ref{sec:QSP}. Otherwise, we treat it as additive, estimate $\sigma$ via the sample standard deviation $\widehat{\sigma}$, and identify the model as $du = \sum_{k=1}^K\widehat{a}_k\cF_k(u)\,dt + \widehat{\sigma}\,dW(t)$. This bypasses the subsequent diffusion identification procedure.

\subsection{New Quadratic Subspace Pursuit (QSP) for candidate diffusion models}\label{sec:QSP}
If the noise is not purely additive, we proceed to generate candidate diffuse terms by considering~\eqref{eq_L0_diffuse_select}. For this, we propose a new greedy algorithm, Quadratic Subspace Pursuit (QSP) described in Section~\ref{sec:qsp-algorithm} and show a conditional stability of support recovery in Section~\ref{sec:qsp-convergence}
\subsubsection{QSP algorithm}\label{sec:qsp-algorithm}
\begin{algorithm}[t]
	\KwIn{Diffuse feature system $\bG_i\in\mathbb{R}^{J\times J}$~\eqref{eq_diffuse_matrix}, $\zeta_i\in\mathbb{R}$~\eqref{eq_diffuse_response}, for $i=1,\dots,I$, and some integer $j\in\{1,\dots,J\}$.}
	
	\textbf{Initialization:} $\ell=0$;\\
     Compute $q_s^{(0)} = \min_{c\in\mathbb{R}} \sum_{i=1}^I(\bG^{s,s}_i c-\xi_i)^2$ for $s=1,\dots,J$\\
    Set $\cI^0 = \{$Indices corresponding to the $j$ smallest  $|q_s^{(0)}|\}$;

Compute $\widehat{\mathbf{c}}^{(0)} \in\underset{\substack{\mathbf{c}\in\mathbb{R}^{J}\\
        [\mathbf{c}]_{(\mathcal{I}^{0})^\complement}=\mathbf{0}}}{\argmin} \sum_{i=1}^I (\bc^\top\bG_i\bc - \zeta_i)^2$;

    Compute $\eta_i^{(0)}= [\widehat{\mathbf{c}}^{(0)}]^{\top}_{\mathcal{I}^{0}}[\bG_i]_{\mathcal{I}^{0}} [\widehat{\mathbf{c}}^{(0)}]_{\mathcal{I}^{0}} - \zeta_i$ for $i=1,2,\dots,I$;

	\While{True}{
		\textbf{Step 1.} 
        Compute $q_s^{(\ell+1)} = \min_c \sum_{i=1}^I(\bG^{s,s}_i c-\eta_i^{(\ell)})^2$ for  $s\not\in \cI^\ell$\\
Set $\widetilde{\mathcal{I}}^{\ell+1}=\mathcal{I}^{\ell}\cup\{$Indices corresponding to the $j$ smallest  $|q_s^{(\ell+1)}|\}$;
		
		\textbf{Step 2.} Compute $\overline{\mathbf{c}}^{(\ell+1)} \in\underset{\substack{\mathbf{c}\in\mathbb{R}^{J}\\
        [\mathbf{c}]_{(\widetilde{\mathcal{I}}^{\ell+1})^\complement}=\mathbf{0}}}{\argmin} \sum_{i=1}^I (\bc^\top\bG_i\bc - \zeta_i)^2$ \\
        
        Set $\mathcal{I}^{\ell+1}=\{$Indices of the entries of $\overline{\mathbf{c}}^{(\ell+1)}$ with the $j$ largest absolute values$\}$\;
		
		\textbf{Step 3.} Compute $\widehat{\mathbf{c}}^{(\ell+1)} \in\underset{\substack{\mathbf{c}\in\mathbb{R}^{J}\\
        [\mathbf{c}]_{(\mathcal{I}^{\ell+1})^\complement}=\mathbf{0}}}{\argmin} \sum_{i=1}^I (\bc^\top\bG_i\bc - \zeta_i)^2$ 
	\\
		
		\textbf{Step 4.} Compute $\eta_i^{(\ell+1)}= [\widehat{\mathbf{c}}^{(\ell+1)}]^{\top}_{\mathcal{I}^{\ell+1}}[\bG_i]_{\mathcal{I}^{\ell+1}} [\widehat{\mathbf{c}}^{(\ell+1)}]_{\mathcal{I}^{\ell+1}} - \zeta_i$ for $i=1,2,\dots,I$;
        
        \If{$\sum_{i=1}^I(\eta_i^{(\ell+1)})^2>\sum_{i=1}^I(\eta^{(\ell)}_i)^2$} {Set $\mathcal{I}^{*} = \mathcal{I}^\ell$ and $\widehat{\bc}^{*} = \widehat{\bc}^{(\ell)}$, then break;}
        
        \Else{Set $\ell\gets \ell+1$ and continue;
        }

	}
	
	\KwOut{ Indices of chosen features $\mathcal{I}^*$ with $|\mathcal{I}^*|=j$ and  reconstructed coefficients $\widehat{\bc}^*\in\mathbb{R}^{J}$ with $\text{supp}(\widehat{\bc}^*) = \cI^*$.
	}
	\caption{{\bf Proposed Quadratic Subspace Pursuit (QSP)}  \label{QSPalgo}}	
\end{algorithm}

 Algorithm~\ref{QSPalgo} shows the pseudo-code of QSP. The proposed QSP  searches for a $j$-sparse vector mainly by iterating two operations stated in Algorithm~\ref{QSPalgo} (\textbf{Step 1}) \textit{Expanding}: include unselected variables that have the strongest potential in reducing the regression residuals; and (\textbf{Step 2}) \textit{Shrinking}: discard selected variables  with  regression coefficients of small magnitudes. 

Specifically, we define $\eta_i^{(0)} = \zeta_i$ for $i=1,\dots,I$ and set $\cI^0$ as described in Algorithm~\ref{QSPalgo}. In the $\ell$-th iteration of \textbf{Step 1}, we use the strategy of  coordinate descent  and examine the squared sum of the individual regression errors:
\begin{equation*}
q_s^{(\ell+1)} = \min_c \sum_{i=1}^I(\bG^{s,s}_i c-\eta_i^{(\ell)})^2\;, s=1,\dots, J\;.
\end{equation*}
Here $\bG_i^{s,s}$ is the $s$-th diagonal element of the $i$-th diffusion feature matrix;  $\eta_i^{(\ell)}$ is the quadratic regression error associated with the $i$-th measurement (\textbf{Step 4}) using the current candidate variables; and $s$ is any index of variables not selected in the previous iteration.  Then we include the variables with the smallest $j$ errors, yielding a set of candidate indices $\widetilde{\cI}^{\ell+1}$ with size  at most $2j$.

In the $\ell$-th iteration of \textbf{Step 2}, we solve the nonlinear regression problem \vspace{-2mm}
\begin{equation}\label{eq_regression_quadratic}
\overline{\mathbf{c}}^{(\ell+1)} \in\underset{\substack{\mathbf{c}\in\mathbb{R}^{J}\\
        [\mathbf{c}]_{(\widetilde{\mathcal{I}}^{\ell+1})^\complement}=\mathbf{0}}}{\argmin} \sum_{i=1}^I (\bc^\top\bG_i\bc - \zeta_i(\widehat{\ba}))^2
\end{equation}
only using the selected variables indexed by $\widetilde{\cI}^{\ell+1}$. For~\eqref{eq_regression_quadratic}, we tested with nonlinear conjugate gradient (CG) descent using various CG updating parameters~\cite{hager2006survey}, and find that the one proposed by~\cite{hager2005new} performs the best in our experiments.  Then we keep the indices of the entries of $\overline{\mathbf{c}}^{(\ell+1)} $ with the $j$ largest absolute values to be the  set $\cI^{\ell+1}$. In \textbf{Step 3}, we compute the regression coefficients using the updated set of variables. The algorithm terminates when the regression error does not improve.

\begin{remark}
While QSP addresses a sparse regression problem with quadratic measurements and SP focuses on linear ones, QSP shares the same algorithmic structure as SP~\cite{dai2009subspace}: both iterate between expansion and shrinkage. We also note that STLS and STRidge, as considered in SINDy~\cite{brunton2016discovering,rudy2017data}, employ certain thresholding strategies; it would therefore be interesting to explore the quadratic counterparts of these algorithms for sparse quadratic regression in future work.
\end{remark}
\subsubsection{Stability of support recovery of QSP}\label{sec:qsp-convergence}
Although a full algorithmic analysis of the proposed QSP algorithm, e.g., convergence,  is beyond the scope of this paper, we present here sufficient conditions so  that QSP always include the true support during the iteration.

Let $\starbc \in \R^J$ be the true $j$-sparse diffusion coefficient vector
with support $S^* = \supp(\starbc)$, $|S^*| = j$.
The diffusion feature matrices $\bG_i \in \R^{J \times J}$,
$i = 1, \ldots, I$, are symmetric positive semi-definite. Suppose
the drift term identification is sufficiently accurate so that the diffuse responses satisfy
\begin{equation}\label{eq:noise_model}
    \zeta_i \;=\; (\starbc)^\top \bG_i \starbc \;+\; \epsilon_i,
    \qquad |\epsilon_i| \leq \epsilon \text{ uniformly.}
\end{equation}

We state  assumptions needed to establish the stability of support recovery property of QSP. 
\begin{assumption}[Bounded feature matrices]
\label{ass:bounded}
There exists a constant $M > 0$ such that
\[
    \max_{i = 1,\ldots,I} \norm{\bG_i}_2 \;\leq\; M.
\]
\end{assumption}

\begin{definition}[Cross-feature coherence]
\label{def:coherence}
We define the \emph{cross-feature coherence} of the diffusion
feature matrices $\{\bG_i\}_{i=1}^I$  as:
\[
    \mu_{\bG}
    \;:=\;
    \max_{s \neq t}\;
    \frac{1}{I}
    \sum_{i=1}^{I}
    \frac{\abs{\bG_i^{s,t}}^2}
         {\bG_i^{s,s}\cdot \bG_i^{t,t}}.
\]
\end{definition}

\begin{assumption}[Signal Strength]
\label{ass:signal}
With $\mu_{\bG}$ as in Definition~\ref{def:coherence}
and 
\begin{equation}
    D_s := \frac{1}{I}\sum_{i=1}^{I}(\bG_i^{s,s})^2,
\end{equation}
the true coefficient vector $\starbc$ and the diffusion
feature matrices $\{\bG_i\}$ satisfy:
\[
    \min_{s \in S^*}
    \frac{[\starbc]_s^2\,\sqrt{D_s}}{\norm{\starbc}^2}
    \;>\;
    2\,|S^*|\,M\,\sqrt{\mu_{\bG}},
\]
where $[\starbc]_s$ denotes the $s$-th entry of $\starbc$,  $|S^*|$ is the true number of active diffusion
features.
\end{assumption}

\begin{theorem}[Stability of support recovery of QSP]
\label{thm:main}
Under Assumptions~\ref{ass:bounded} and~\ref{ass:signal}, suppose $j \geq |S^*|$. There exists some $\epsilon_2^*$ such that whenever the perturbation bound  $\epsilon$ in~\eqref{eq:noise_model} satisfies $\epsilon<\epsilon_2^*$,  we have:
\begin{enumerate}
    \item $S^* \subseteq \mathcal{I}^0$.
    \item For any $\ell \geq 0$, if $S^* \subseteq \mathcal{I}^\ell$ and:
        \begin{equation}\label{eq:approx_cond}
    \frac{\norm{\overline{\mathbf{c}}^{(\ell+1)}
          - \starbc}^2}{\norm{\starbc}^2}
    \;<\;
    \frac{1}{4}\cdot\frac{\min_{s \in S^*}\sqrt{D_s}}
              {\,\max_{s \in S^*}\sqrt{D_s}}
    \cdot
    \min_{s \in S^*}
    \frac{[\starbc]_s^2}{\norm{\starbc}^2},
\end{equation}
    then $S^* \subseteq \mathcal{I}^{\ell+1}$.
\end{enumerate}
\end{theorem}
See Appendix~\ref{sec:proof-main} for the proof and the expression of $\epsilon_2^*$.  Theorem~\ref{thm:main} guarantees that if the perturbation $\{\epsilon_i\}_i$ coming from the drift residuals or sample approximation variation is sufficiently small, QSP with $j\geq |S^*|$ is guaranteed to include the true support in the initialization. In particular, when $|S^*|=j$, this will exactly recover. Moreover, when the relative coefficient recovery of $\overline{\bc}^{(\ell+1)}$ with more non-zero entries than $\bc^*$ is bounded as~\eqref{eq:approx_cond}, the true support will remain included if it was already from the previous iteration. Combining these, we can conclude that QSP has conditional stability of support recovery. 

We highlight that the condition \eqref{eq:approx_cond} has a natural
interpretation. Define:
\begin{itemize}
    \item The \emph{relative approximation error}:
    $\displaystyle\delta^{(\ell+1)} :=
    \frac{\norm{\overline{\mathbf{c}}^{(\ell+1)} -
    \starbc}}{\norm{\starbc}}$,
    \item The \emph{minimum relative signal amplitude}:
$\displaystyle\alpha
    \;:=\;
    \min_{s \in S^*}
    \frac{|[\starbc]_s|}{\norm{\starbc}}
    \;\in\; (0, 1],
$    \item The \emph{feature energy condition number}:
    $\displaystyle\kappa_D := \frac{\max_{s \in S^*}\sqrt{D_s}}
    {\min_{s \in S^*}\sqrt{D_s}} \geq 1$.
\end{itemize}
Then \eqref{eq:approx_cond} reads:
\[
   \delta^{(\ell+1)}
    \;<\;
  \frac{\alpha}{2\,\sqrt{\kappa_D}}.
\]
 This threshold is
easy to satisfy when:
\begin{enumerate}
    \item[(i)] The true coefficients are well-balanced
    ($\alpha$ close to $\frac{1}{|S^*|}$), and
    \item[(ii)] The feature energies $D_s$ are
    homogeneous across true features ($\kappa_D$
    close to $1$).
\end{enumerate}
Conversely, support recovery becomes harder when one
true feature is much weaker than the others (small
$\alpha$) or when the feature energies are highly
heterogeneous (such as  $\kappa_D$ is large).

\begin{remark}
Empirically, we observe that QSP terminates in about 5--10 iterations. However, its convergence analysis is non-trivial as we need to characterize the conditions under which  $\sum_{i=1}^I(\eta^{(\ell)}_i)^2$ (Line 12 of Algorithm~\ref{QSPalgo}) is non-increasing in $\ell$; this requires careful control of the score $q_s^{(\ell+1)}$ (Line 7 of Algorithm~\ref{QSPalgo}) for $s\notin S^*$; see~\cite{dai2009subspace,he2025group} for examples. This, however, is highly nontrivial as it involves $\eta_i^{(\ell)}$, which admits a closed form expression only for $\ell=0$. 
Exploration of its applicability to more general settings, as well as further analysis of its convergence and stability, are left to a different work. 
\end{remark}

\subsection{Normalization and trimming}\label{sec_trimming}
The identification of the drift terms is analogous to the identification of PDEs, and the candidate generation~\eqref{eq_L0_drift_select_greedy} is addressed using the SP algorithm~\cite{dai2009subspace}. To avoid the effects of scaling during feature selection, we normalize the columns of $\bF$ so that each column has unit norm. As proposed in~\cite{tang2023weakident}, the trimming technique is effective at removing redundant features that fail to be removed during the greedy search. Specifically, for a candidate drift coefficient vector $\widehat{\ba}=(\widehat{a}_1,\dots,\widehat{a}_K)$, we compute the relative contribution of the $k$-th term as
\begin{equation}
\rho_k(\widehat{\ba}):=\frac{|\widehat{a}_k|}{\max_{s=1,\dots,K}|\widehat{a}_s|}
\end{equation}
for $k=1,\dots,K$, and for a specified threshold parameter $\tau_{d}>0$, we set the $k$-th coefficient to zero if $\rho_k(\widehat{\ba})<\tau_d$ and re-estimate the other non-zero coefficients via least square fitting.

For the identification of the diffusion terms, we propose analogous normalization and trimming techniques adapted to the quadratic structure of~\eqref{eq_L0_diffuse_select}.
In particular, we define $\overline{\bG}_i:= \bLambda^{-1}\bG_i\bLambda^{-1}$, 
for $i=1,\dots,I$, where $\bLambda\in\mathbb{R}^{J\times J}$ is a diagonal matrix whose $j$-th diagonal element is $\sqrt{J^{-1}\cdot\sum_{i=1}^I\bG_i^{j,j}}$, for $j=1,\dots,J$. When running QSP (Algorithm~\ref{QSPalgo}), we substitute $\bG_i$ with its normalized version $\overline{\bG}_i$, and the resulting coefficient estimate $\widehat{\bb}$ is transformed back to the original basis via $\widehat{\bb}\bLambda^{-1}$. For trimming, given a candidate diffusion coefficient vector $\widehat{\bb}=(\widehat{b}_1,\dots,\widehat{b}_J)$, we define $\theta_j(\widehat{\bb}) :=|\widehat{b}_j|/\max_{s=1,\dots,J}|\widehat{b}_s|$ for $j=1,\dots,J$, and set the $j$-th coefficient to zero if $\theta_j(\widehat{\bb}) < \tau_f$ for some threshold $\tau_f>0$. The remaining non-zero coefficients are then re-estimated via the nonlinear regression~\eqref{eq_regression_quadratic}.

\section{Numerical Experiments}\label{sec_numerical}

This section presents numerical experiments to validate Stoch-IDENT on various SPDEs
{involving genuinely nontrivial features (mixed additive/multiplicative noise, nonlinear diffusion structure).} We simulate observed trajectories by solving the SPDEs numerically using the Euler-Maruyama scheme for time discretization and appropriate methods for spatial variables. All examples are Cauchy problems with periodic boundary conditions. 

In this work, we consider \textbf{dictionaries of type $\mathbf{(p,q)}$} for integers $p\geq0$ and $q\geq 1$, which means that we include  features with spatial derivatives up to order $p$ and multiplications of up to $q$ terms. These dictionary  parameters  can be different for the dictionaries for the drift and diffusion parts.   We estimate the spatial differential features using the classical 7-point finite difference scheme~\cite{fornberg1988generation} with periodic boundary conditions.  For identifying the diffusion part, the maximal  number of iterations of the nonlinear CG for address~\eqref{eq_regression_quadratic} is set to be $1000$; the initial guesses for the non-zero  entries are fixed at $10$; and  the iteration terminates if  the gradient of the loss function has a magnitude smaller than $1\times 10^{-14}$.  The maximal number of iterations for the QSP (Algorithm~\ref{QSPalgo}) is set to $100$, although in practice, we observe that it converges around $5\sim 10$ iterations. For both drift and diffusion identification, we apply a trimming threshold (Section~\ref{sec_trimming}) of $\tau_d=\tau_f=0.3$. For the pure additive noise detection (line 8 in Algorithm~\ref{alg:stoch-ident}), the $p$-value threshold is fixed as $p^*=0.02$.

\begin{table}
\centering
\caption{Evaluation for identification. Here $\text{TP}=|\operatorname{supp}(\bc^*)\cap \operatorname{supp}(\widehat{\bc})|$, $\text{TN}=|\operatorname{supp}(\bc^*)^\complement\cap \operatorname{supp}(\widehat{\bc})^\complement|$, $\text{FP}=|\operatorname{supp}(\bc^*)^\complement\cap \operatorname{supp}(\widehat{\bc})|$, and $\text{FN}=|\operatorname{supp}(\bc^*)\cap \operatorname{supp}(\widehat{\bc})^\complement|$ between  an estimated drift (diffusion) coefficient vector $\widehat{\bc}$ and the true  drift (diffusion) vector $\bc^*$, respectively.  }\label{tab_metrics}
\begin{tabular}{c|c|c|c}
\toprule\toprule
Precision (Prec)& Accuracy (Acc)& Recall& F1-score\\
\midrule
$\frac{\text{TP}}{\text{TP}+ \text{FP}}$&
$\frac{\text{TP}+\text{TN}}{\text{TP}+ \text{FP}+\text{TN}+ \text{FN}}$&
$\frac{\text{TP}}{\text{TP}+ \text{FN}}$&
$2\cdot \frac{\text{Prec}\times \text{Recall}}{\text{Prec}+ \text{Recall}}$\\
   \bottomrule
\end{tabular}
\end{table}

For performance evaluation for both drift and diffusion parts, we  compare the support of the estimated coefficient vector $\bc=(c_1,\dots, c_K)$ with the support of the ground truth coefficient vector $\bc^*=(c_1^*,\dots, c^*_K)$ using metrics in Table~\ref{tab_metrics}.
These metrics are all bounded between $0$ and $1$, with $1$ being the best. In addition, we evaluate the coefficient errors using the following metrics:
\begin{itemize}
\item  Relative in-coefficient error
\begin{equation*}
E_{\text{in}}(\bc, \bc^*)=\frac{\sqrt{\sum_{i\in\operatorname{supp}(\bc^*)}(c_i-c^*_i)^2}}{\|\bc^*\|_2}\times 100\%\;.
\end{equation*}
\item Relative out-coefficient error
\begin{equation*}
E_{\text{out}}(\bc, \bc^*)=\frac{\sqrt{\sum_{i\not\in\operatorname{supp}(\bc^*)}c_i^2}}{\|\bc\|_2}\times 100\%\;.
\end{equation*}
\end{itemize}
The relative in-coefficient error measures the deviation of the estimated coefficients for the true features, while the relative out-coefficient error measures the magnitude of the coefficients for the wrongly identified features.

\subsection{General performances of Stoch-IDENT}\label{sec_general}
\def\fw{0.25}
\begin{figure}
\centering
	\begin{tabular}{ccc}
		(a)&(b)&(c)\\
		\includegraphics[width=\fw\textwidth]{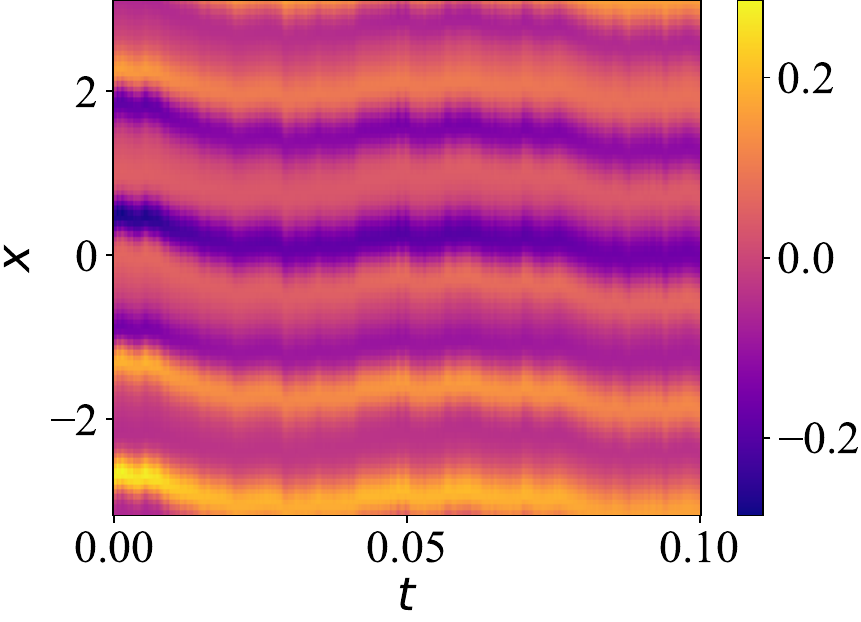}&
		\includegraphics[width=\fw\textwidth]{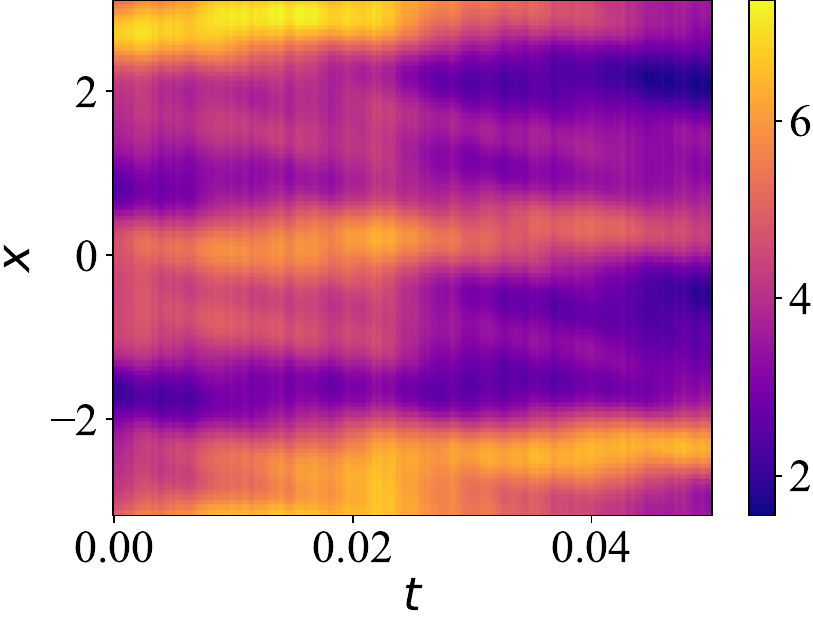}&
		\includegraphics[width=\fw\textwidth]{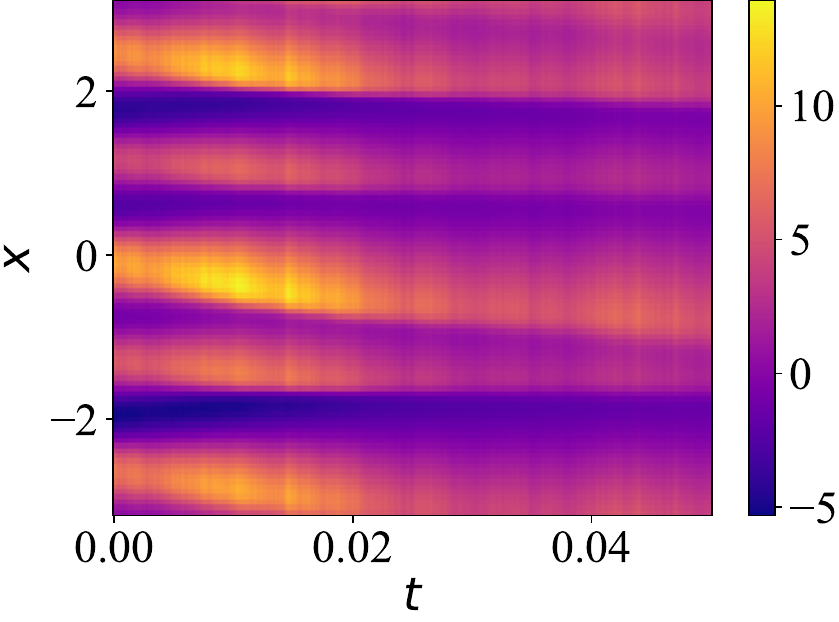}\\\hline
		\multicolumn{3}{c}{Precision}\\\hline
		\includegraphics[width=\fw\textwidth]{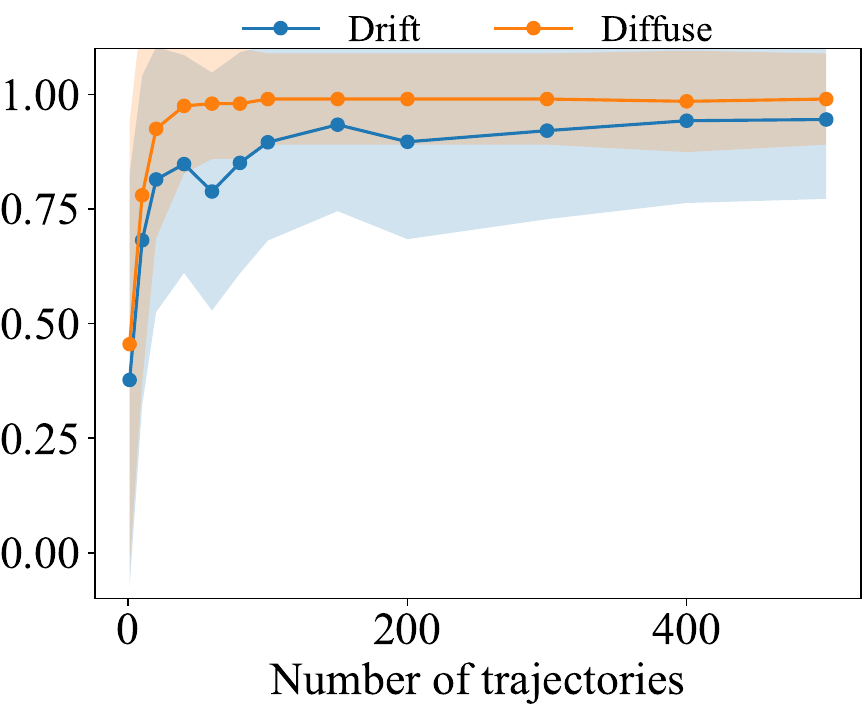}&
		\includegraphics[width=\fw\textwidth]{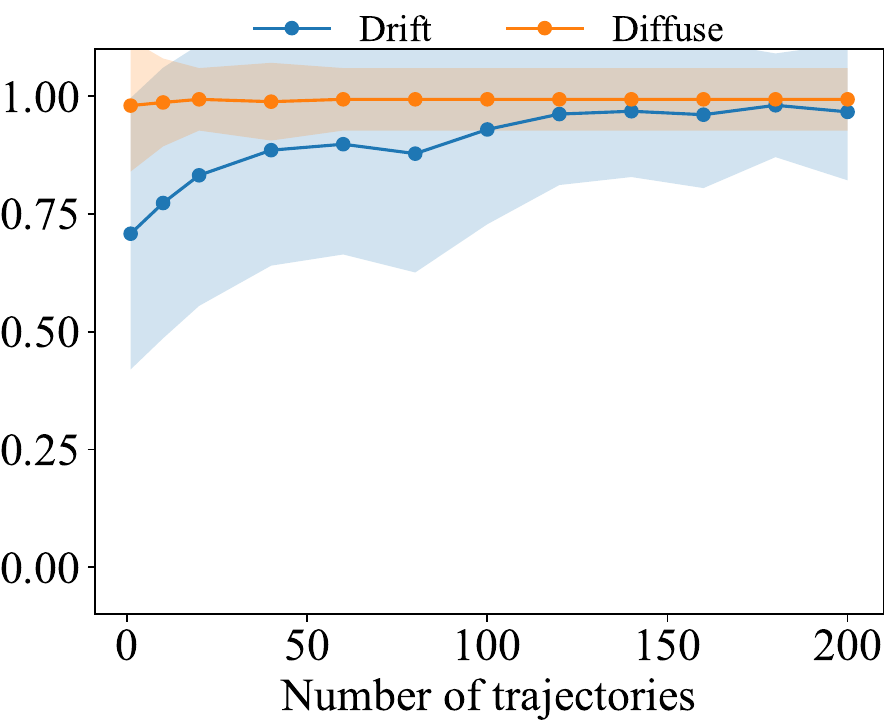}&
		\includegraphics[width=\fw\textwidth]{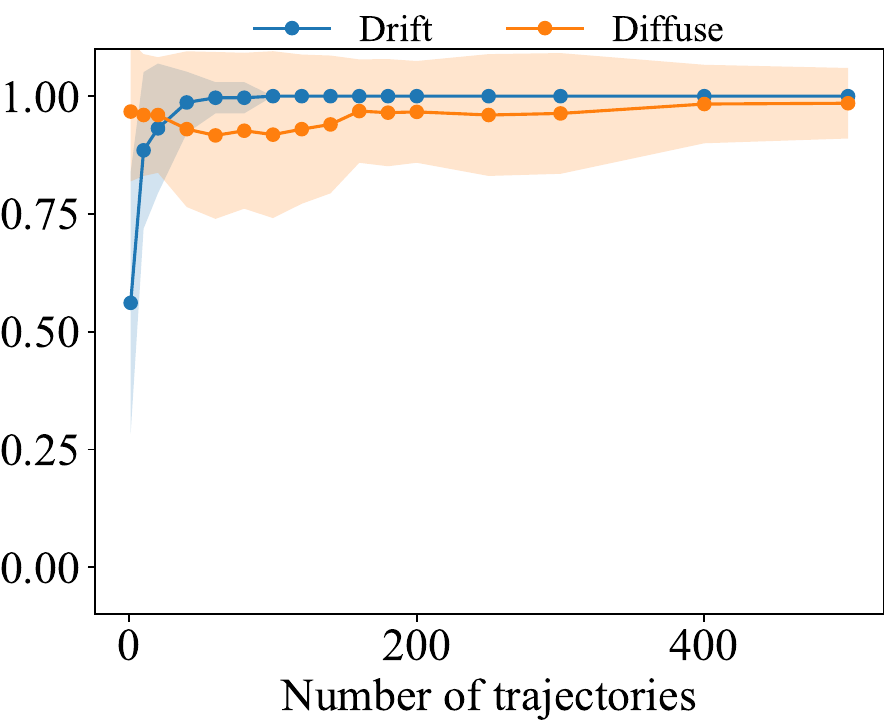}\\\hline
		\multicolumn{3}{c}{Recall}\\\hline
		\includegraphics[width=\fw\textwidth]{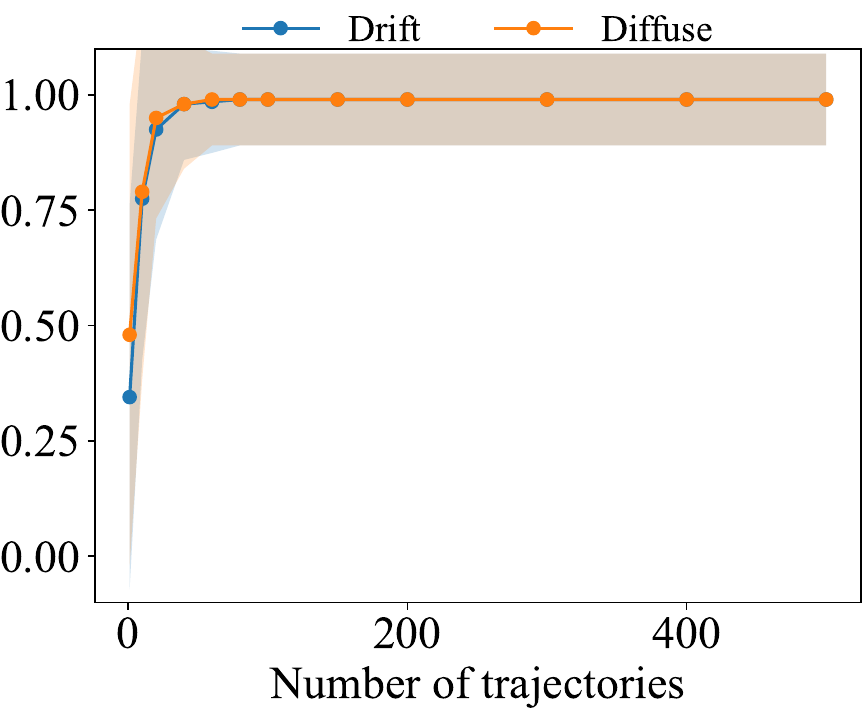}&
		\includegraphics[width=\fw\textwidth]{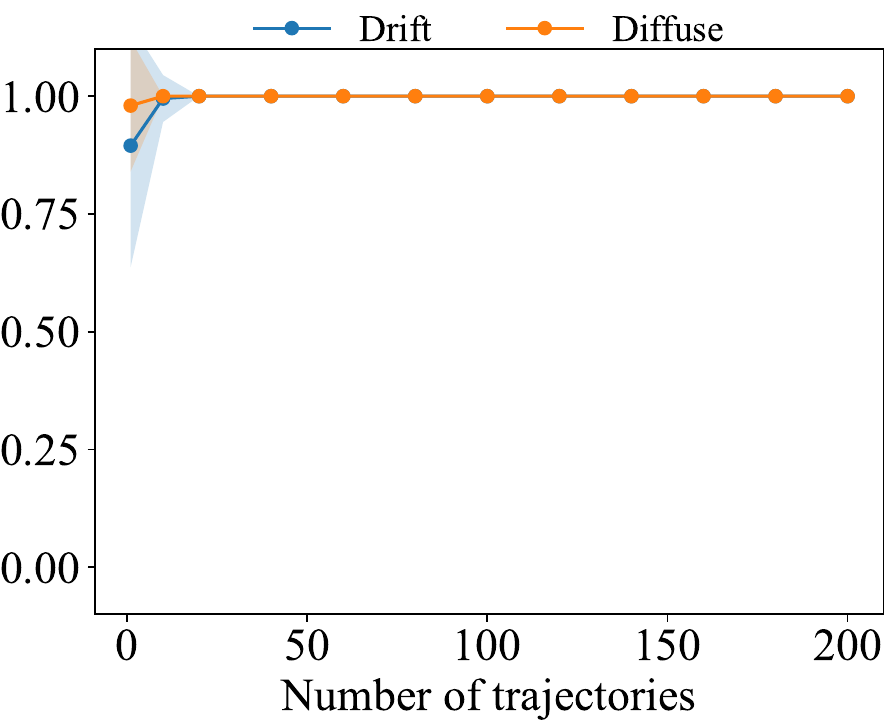}&
		\includegraphics[width=\fw\textwidth]{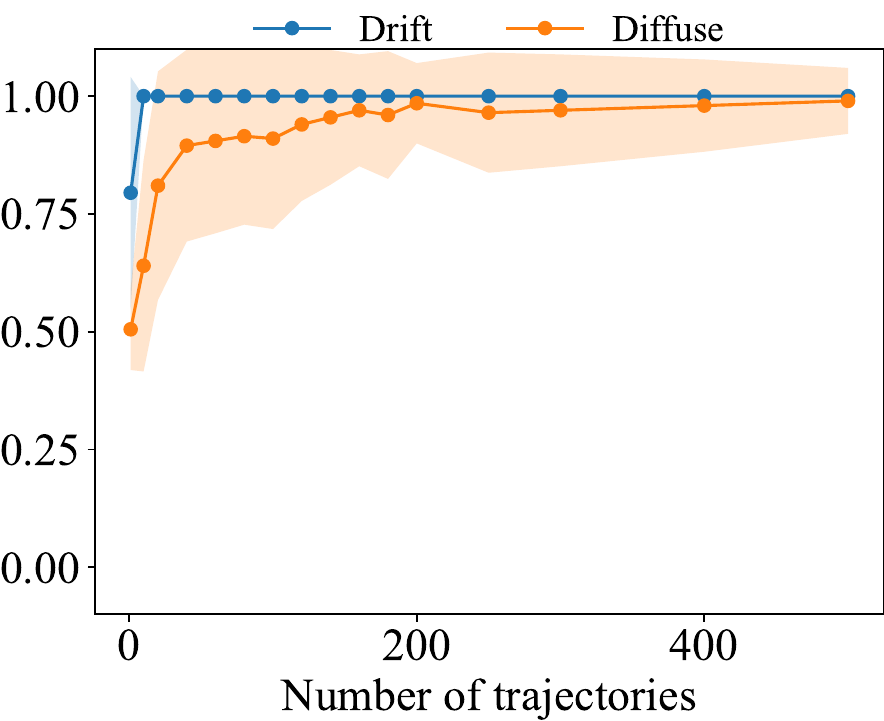}\\\hline
		\multicolumn{3}{c}{Coef-In}\\\hline
		\includegraphics[width=\fw\textwidth]{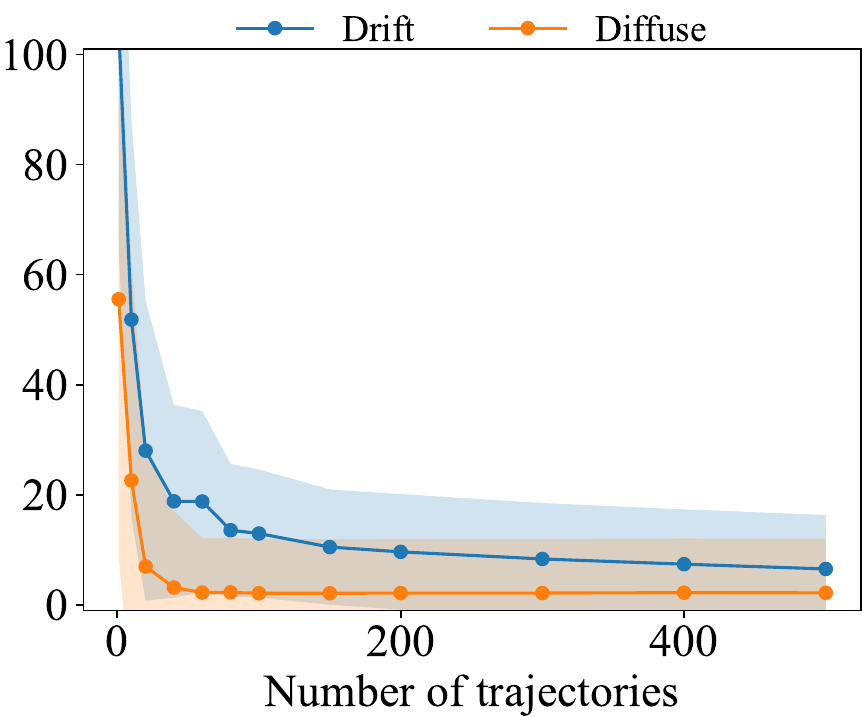}&
		\includegraphics[width=\fw\textwidth]{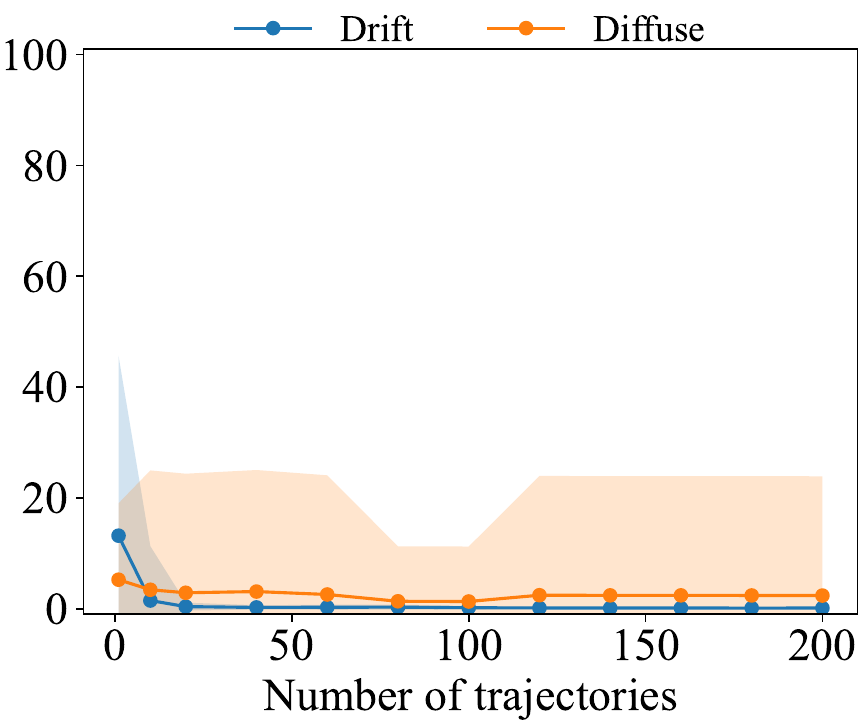}&
		\includegraphics[width=\fw\textwidth]{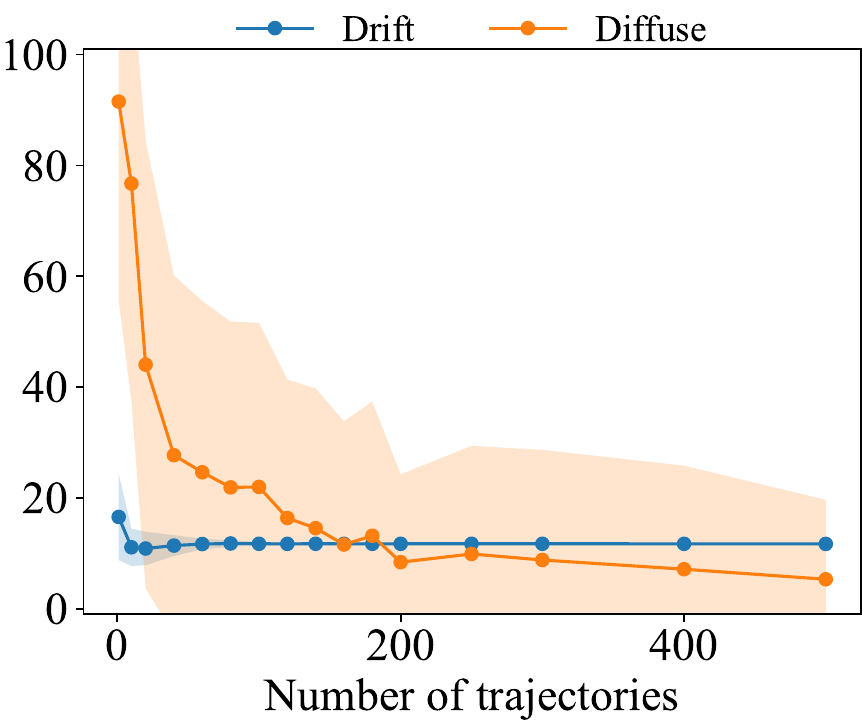}\\\hline
		\multicolumn{3}{c}{Coef-Out}\\\hline
		\includegraphics[width=\fw\textwidth]{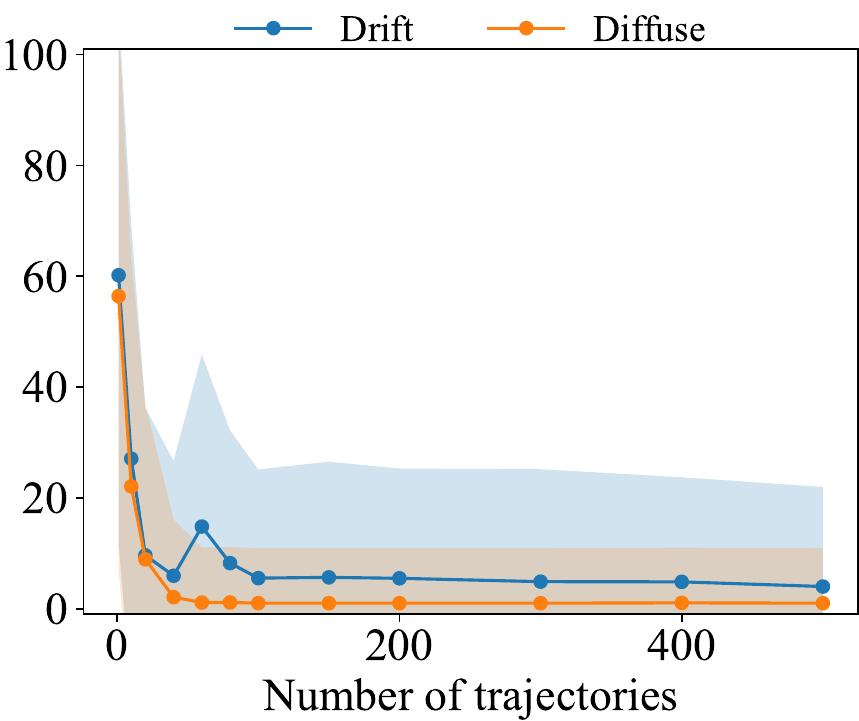}&
		\includegraphics[width=\fw\textwidth]{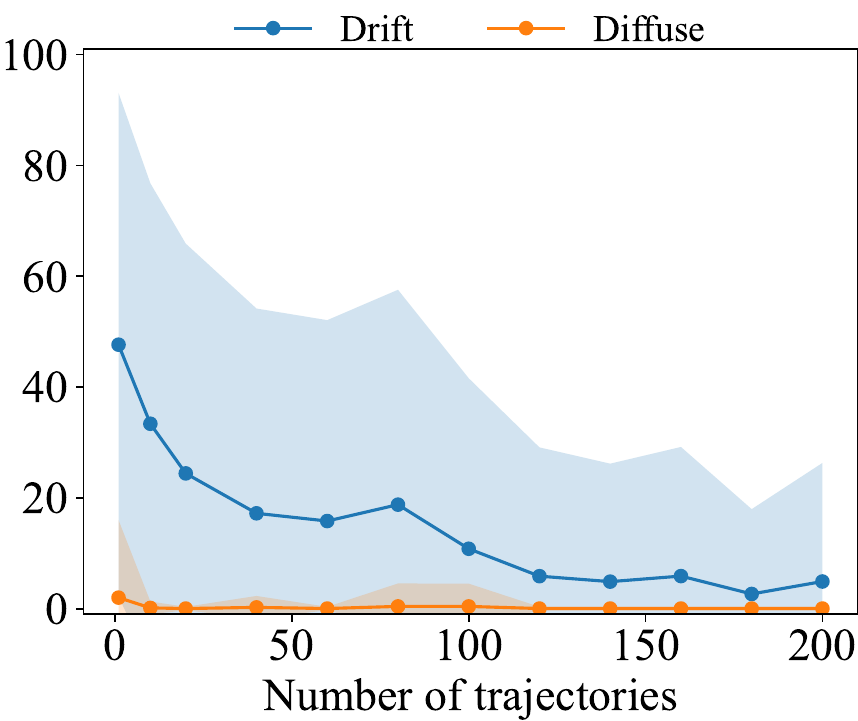}&
		\includegraphics[width=\fw\textwidth]{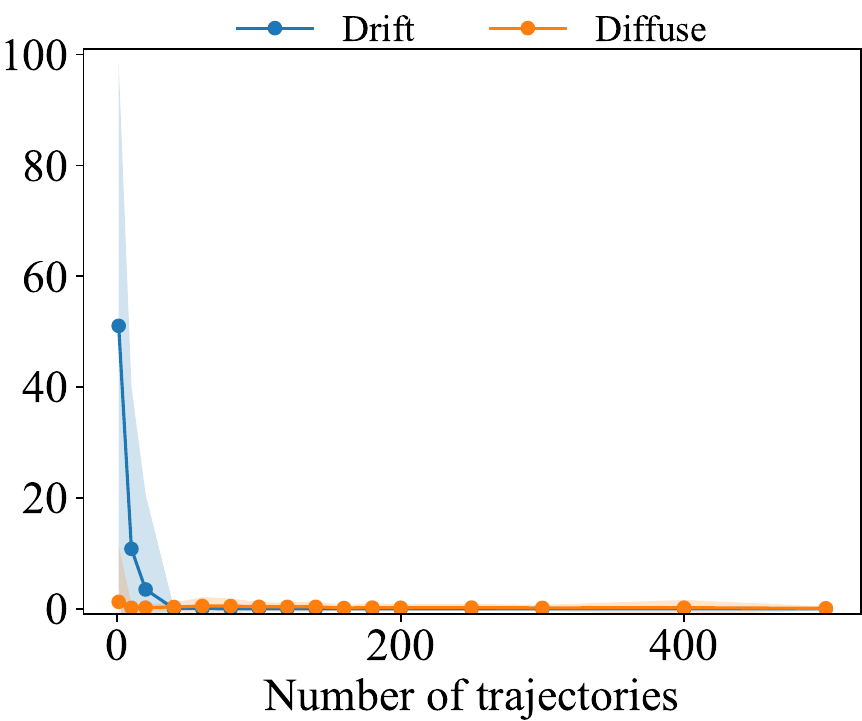}
	\end{tabular}
	\caption{Identification performance versus the number of trajectories for (a) the stochastic transport equation~\eqref{eq_example_transport}, (b) the stochastic KdV equation~\eqref{eq_example_kdv}, and (c) the stochastic Burgers equation~\eqref{eq_example_burgers}. Performance is evaluated using precision, recall, and relative in-sample and out-of-sample coefficient errors. For each choice of number of trajectories, results are averaged over 100 independent experiments with shaded regions indicating one standard deviation.}\label{fig_general_performance}
\end{figure}

We test Stoch-IDENT on $N$ independent trajectories generated from the following SPDEs.\\

\noindent\textbf{Stochastic transport equation}  with multiplicative noise~\cite{MR2593276}: 
\begin{equation}\label{eq_example_transport}
 d u= (3u_x + 0.5u_{xx})dt + u_xdW(t)\;,\quad\text{for}~x\in [-\pi, \pi)~\text{and}~t\in(0,0.1)
\end{equation}
with the initial condition $ u(0,x) = 0.1\exp(\sin(4x-0.2))\cdot\cos(5x+0.8)$.\\

\noindent\textbf{Stochastic Korteweg-de Vries (KdV) equation}  with additive noise~\cite{MR1711301}: 
\begin{equation}\label{eq_example_kdv}
du = (-6uu_x - u_{xxx})dt + 7dW(t)\;,\quad\text{for}~x\in [-\pi, \pi)~\text{and}~t\in(0,0.05)
\end{equation}
with the initial condition $ u(0,x) = \exp(\sin(3x-0.2))\cdot \cos(2x+0.8) + 4.0$.\\

\noindent\textbf{Stochastic Burgers equation}  with both additive and multiplicative noise~\cite{MR3033029}: 
\begin{equation}\label{eq_example_burgers}
du = (3uu_x + 0.5u_{xx})dt + (5 + 2u)dW(t)\;,\quad\text{for}~x\in [-\pi, \pi)~\text{and}~t\in(0,0.05)
\end{equation}
with the initial condition $u(0,x) = 3\sin^2(x-1)+ 2\cos(2x)  + 5\sin(5x+2.0)+ 1.0$.\\

For all the SPDEs above,  we employ periodic boundary conditions, and  the solution datasets are collected on a uniform grid with $300$ points in time and $100$ points in space. The grids are up-sampled in time by $50$ times when generating the data through numerical evolution. We use dictionaries of type $(4,3)$ for the drift part, and type $(2,2)$ for the diffusion part. {In accordance with Proposition~\ref{prop-drift-identification} and Theorem~\ref{diffusion-identification-covariances}, unique identification requires the initial data to activate sufficiently many nontrivial Fourier modes. The initial conditions in the experiments here and afterwards are therefore chosen to be smooth but spectrally rich functions: combinations of exponentials, sines, and cosines with incommensurate frequencies}. For each choice of sample size $N$, we conduct $100$ independent experiments and examine the statistics of the evaluation metrics. 

In Figure~\ref{fig_general_performance}, we present the identification results as follows: the first row displays sample trajectories; the second row shows the relationship between $N$ and precision (see Table~\ref{tab_metrics}); the third row reports recall (see Table~\ref{tab_metrics}); the fourth row presents the relative in-coefficient error; and the last row illustrates the relative out-coefficient error. We make the following observations: (1) As $N$ increases, identification accuracy measured by precision and recall improves and coefficient error decreases across all SPDEs.
(2)  Identifying the diffusion term is most challenging with a mixture of additive and multiplicative noise. Performance is best with purely additive noise (e.g., KdV equation), worse with purely multiplicative noise (e.g., transport equation), and worst with mixed noise (e.g., Burgers equation). Notably, as $N$ grows, the identified diffusion part for the Burgers example often contains only additive noise. This happens because the drift identification method measures the residual's lack of fit using a squared $\ell_2$ norm, which implicitly treats the residual as a homoscedastic normal vector. If the underlying multiplicative noise is strong, it can cause the residual to deviate from normality, facilitating correct diffusion identification; however, stronger noise also requires more sample paths to accurately approximate the covariance structure.

\subsection{Identification of Stochastic nonlinear Schr\"odinger equation}

\begin{table}
\centering
\caption{The most frequently identified model from $100$ independent experiments of identifying the  stochastic nonlinear Schr\"{o}dinger equation~\eqref{eq_NLS} using varying numbers of trajectories ($N$). For each identified feature, we also report the mean value of the associated coefficients $\pm$ the standard deviation.  }\label{tab_NLS} 
{\small
\begin{tabular}{c l}
\toprule\toprule
\multicolumn{1}{l}{$N$}& \multicolumn{1}{c}{Most frequently identified model}
\\
\midrule
1& $\displaystyle\begin{cases}
du &= \left(9.760_{\pm 0.559} u +4.435_{\pm 0.061} u_{xx}+ 0.888_{\pm 0.024} u^3\right)dt+ 2.832_{\pm 0.027}dW(t)\\
dv &= \left(-1.012_{\pm 0.041} u^2v -4.772_{\pm 0.082} v_{xx} -0.950_{\pm 0.040} v^3\right)dt+0.987_{\pm 0.003} udW(t)
\end{cases}$\\
\midrule
10& $\displaystyle\begin{cases}
du &= \left(4.942_{\pm0.092} u_{xx} +0.994_{\pm 0.068} u^3+ 1.002_{\pm 0.101} uv^2\right)dt +1.000_{\pm 0.014} vdW(t) \\
dv &= \left(-1.004_{\pm 0.103} u^2v -4.960_{\pm 0.010} v_{xx} -1.009_{\pm 0.084} v^3\right)dt +1.001_{\pm 0.018} u dW(t)
\end{cases}$\\
\midrule
40& $\displaystyle\begin{cases}
du &= \left(4.935_{\pm 0.049} u_{xx} + 0.989_{\pm 0.037} u^3 + 0.995_{\pm 0.054} uv^2\right)dt + 0.998_{\pm 0.010} v dW(t)\\
dv&=\left(-1.010_{\pm 0.059} u^2v -4.950_{\pm 0.048} v_{xx} -0.997_{\pm 0.040} v^3\right)dt +1.004_{\pm 0.009} u dW(t)
\end{cases}$\\
\bottomrule\toprule
& \multicolumn{1}{c}{Reference model}\\
\midrule
& $\displaystyle\begin{cases}
du &= \left(5u_{xx} + u^3 + uv^2\right)dt \pm v dW(t)\\
dv&=\left(-u^2v -5v_{xx} -v^3\right)dt \pm  u dW(t)
\end{cases}$\\
\bottomrule
\end{tabular}
}
\end{table}

To demonstrate the versatility of the proposed Stoch-IDENT, we test it with the  \textbf{stochastic nonlinear Schr\"{o}dinger (NLS)} with multiplicative noise (see e.g. \cite{MR3670034,MR3826675}):
\begin{equation}\label{eq_NLS}
d \rho= 5\bi  \rho_{xx} dt+ \bi |\rho|^2\rho dt+\bi  \rho dW(t)\;,\quad\text{for}~x\in [-\pi, \pi)~\text{and}~t\in(0,0.2)
\end{equation}
where $\rho = u+\bi v$ is a complex function, and  $W(t)$ is a real-valued Wiener process. The initial condition is $u(0,x) = \exp(\sin(2x+1))+1$ and $v(0,x) = \exp(\cos(3x+1))+1$. The setup for the grid and dictionaries is identical to the previous experiments. 
For  $N=1, 10$ and $40$ sample paths, we conduct $100$ independent experiments. 

In Table~\ref{tab_NLS}, we report the most frequently identified models when $N=1,10$ and $40$. For the coefficients, we show the sample means and standard deviations of the estimated values. We observe that \textbf{(1)}  Although the correct features can be frequently identified, a single path ($N=1$) is insufficient to yield the correct model.  \textbf{(2)}  In this NLS example, a few more sample paths ($N=10$) are enough to frequently identify the correct model, and the variability of the estimated coefficients is reduced when more paths are available.   \textbf{(3)} In the original model~\eqref{eq_NLS}, the imaginary diffusion part is symbolically $-udW(t)$, which is equivalent to $udW(t)$  in the sense discussed in section~\ref{newsec-linear-spde-identification}. Since we use positive values for the nonzero entries of the initial guesses for the nonlinear CG iterations, the estimations  tend to converge to the  positive values. From~\eqref{eq_regression_quadratic}, this sign difference is  indistinguishable; thus, the coefficient errors are measured in absolute values in this case.

\subsection{Parabolic versus hyperbolic identification}\label{sec_parabolic_vs_hyperbolic}

\begin{figure}
\centering
\includegraphics[width=0.9\textwidth]{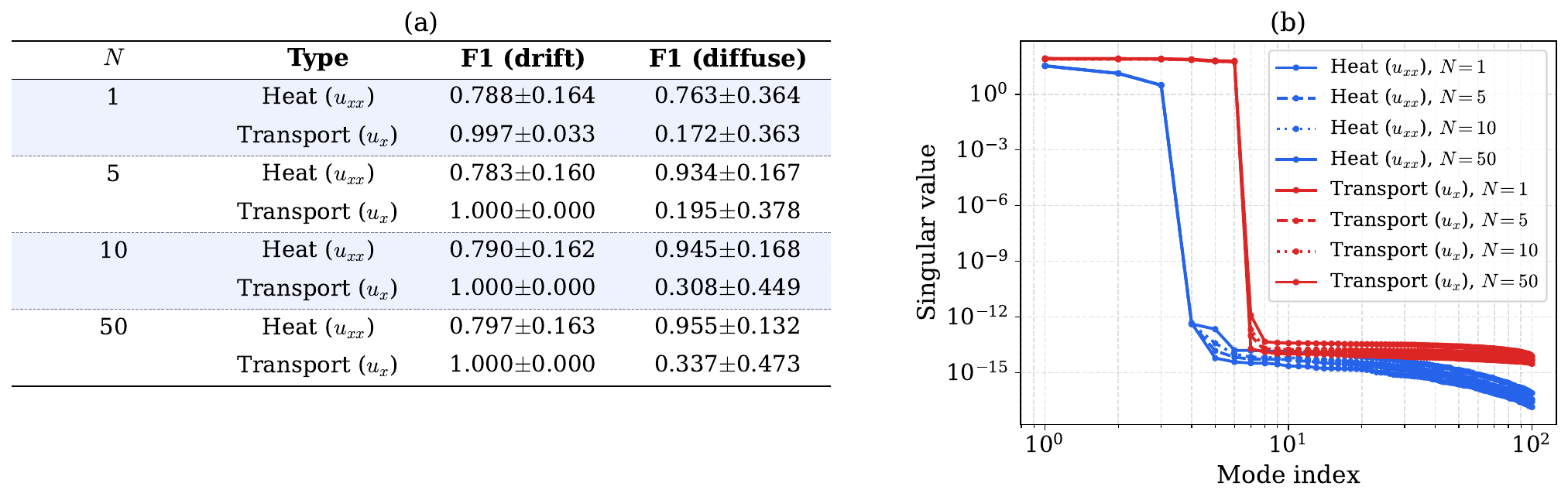}
\caption{Numerical verification of the identifiability theory developed in Section~\ref{sec_identifiability}. According to Theorem~\ref{theorem-diffusion-identification} and Proposition~\ref{hyper-dimension}, the solution dimension of the stochastic heat equation is generally lower than that of the stochastic transport equation. Using~\eqref{eq:ident-transport} and~\eqref{eq:ident-heat} as test models, (a) shows the F1 scores of the drift and diffuse identification performance as the number of trajectories increases, and (b) shows the spectrum of the averaged trajectory. For the stochastic transport equation, drift recovery is much easier than for the stochastic heat equation, which is further supported by the richer spectral content of the transport solution compared to the heat solution. On the other hand, diffuse identification is considerably harder for the transport equation than for the heat equation, due to the fact that the transport equation accumulates noise during evolution.}\label{fig_identifiability}
\end{figure}

To illustrate the theoretical findings of Section~\ref{sec-data-space}, we conduct a controlled experiment comparing the identification of a parabolic and a hyperbolic SPDE with identical diffusion structures. Specifically, we consider comparing a stochastic transport equation:
\begin{equation}\label{eq:ident-transport}
du = 5u_{xx}\,dt + u\,dW(t), 0<t<1.0, -\pi<x\leq \pi
\end{equation}
with a stochastic heat equation:
\begin{equation}\label{eq:ident-heat}
du = 5u_x\,dt + u\,dW(t), 0<t<1.0, -\pi<x\leq \pi,
\end{equation}
with periodic boundary condition in space. Both equations share the same diffusion term and are solved on the same spatial grid with the same initial condition $\cos(x- 0.8) + \cos(3x+ 0.8)- \cos(5x + 1)$. Since our theoretical analysis in Section~\ref{sec_identifiability} focuses on linear SPDEs for accessibility, here and only here we use type $(5,1)$ dictionaries for both drift and diffuse terms. We apply Stoch-IDENT to $N$ trajectories of~\eqref{eq:ident-transport} and~\eqref{eq:ident-heat} for $N=1,5,10,$ and $50$, repeating each experiment $100$ times, and report the F1 score (see Table~\ref{tab_metrics}) for both the drift and diffuse parts.

Figure~\ref{fig_identifiability} (a) shows the comparison results. We observe that as the number of trajectories $N$ increases, the identification accuracy for both drift and diffuse parts of both equations improve. For the stochastic transport equation, we see that the drift identification is better than the stochastic heat equation: it always recovers exactly the drift part after $N\geq 5$, yet the F1 score of  the heat equation achieves only around $0.8$ when $N=50$. This is reflected by the spectrum of the mean trajectory data shown in (b), where the spectral content of the transposition case is much more than the heat case, confirming our claims made in  Propositions~\ref{theorem-diffusion-identification} and~\ref{hyper-dimension}. 

Interestingly, contrary to the case for drift identification, for diffuse identification,  For the stochastic heat equation, the diffusion operator smooths and dissipates energy over time. The corresponding stochastic convolution $\int_0^t e^{\mathcal{L}(t-s)} \mathcal{G} \, dW(s)$ decays exponentially in each Fourier mode (except zero mode), meaning the contribution of noise injected at earlier times is strongly suppressed by time $t$. As a result, the stochastic part of the solution remains well-controlled and does not grow. For the stochastic transport equation, however, there is no such dissipation. The noise accumulates along characteristics without decay. Concretely, the  stochastic integral in the transport case grows like $\sqrt{t}$ in $L^2$, so the signal-to-noise ratio for the diffuse identification degrades over time.

Moreover, the diffuse identification relies on computing the diffusion feature matrix $\mathbf{G}_i$ in~\eqref{eq_diffuse_matrix}. As implied in~\cite{cosse2024sparse}, when the off-diagonal entries of $\mathbf{G}_i$ are small relative to the diagonal, different features are more distinguishable and sparse recovery becomes more tractable. In the transport case, the accumulated noise enters $u$ itself, so each feature $G_j(t_{i-1}, x)$ is contaminated by the full history of the Wiener process up to time $t_i$. This causes inflated off-diagonal correlations in $\mathbf{G}_i$, making it prone to ill-conditioning. Furthermore, the diffuse response $\zeta_i(\widehat{\ba})$ as defined in~\eqref{eq_diffuse_response} suffers from stronger temporal correlation for~\eqref{eq:ident-transport}, which can inflate estimation variance. These claims and solutions deserve more careful investigation and we leave them to future work.

\subsection{Comparison study}

\begin{table}
\centering
\caption{We compare Stoch-IDENT with e-SINDy and VB~\cite{mathpati2024discovering} using  identical drift and diffuse feature dictionaries on stochastic heat equations with multiplicative noise: (I) $0.3u_xdW(t)$ and (II) $(2u+0.5u_x)dW(t)$. For Stoch-IDENT, the default parameters are used; for e-SINDy and VB, the parameters are selected via grid search (see the main text for details). Stoch-IDENT achieves robust identification and accurate coefficient estimation. Importantly, while e-SINDy and VB identify the squared diffusion term, Stoch-IDENT recovers the original diffusion term directly (up to equivalence).}\label{tab_compare}
{\small
\begin{tabular}{c|cc|cc|cc|cc}
\toprule
\toprule
\multicolumn{9}{c}{(I) Equation~\eqref{eq_stoch_heat_mult}}\\\midrule
\multirow{2}{*}{Method}& \multicolumn{2}{c}{Prec ($\uparrow$)}& \multicolumn{2}{c}{Recall ($\uparrow$)}& \multicolumn{2}{c}{$E_{\text{in}}$ ($\downarrow$)}&\multicolumn{2}{c}{$E_{\text{out}}$ ($\downarrow$)}
\\
&Drift&Diff.&Drift&Diff.&Drift&Diff.&Drift&Diff.\\
\midrule
Proposed&0.9358&0.9883&1.0000&1.0000&1.92&2.79&5.17&0.49\\
e-SINDy &0.6217&1.0000&1.0000&1.0000&1.13&3.34&60.68&0.00\\
VB      &0.9350&1.0000&1.0000&1.0000&1.07&2.33&12.68&0.00\\
\midrule
\multicolumn{9}{c}{Most frequently identified model}\\\midrule
Proposed&\multicolumn{8}{l}{$du = 0.982_{\pm0.007}u_{xx}\,dt+ 0.298_{\pm0.004} u_{x}\,dW(t)$}\\\midrule
e-SINDy&\multicolumn{8}{l}{$du = \Big(0.990_{\pm0.008}u_{xx}-1.334_{\pm2.714}u^2u_{x}\Big)\,dt+\Big(0.093_{\pm0.002} u_x^2\Big)^{1/2}\,dW(t)$}\\\midrule
VB&\multicolumn{8}{l}{$du = 0.992_{\pm0.008}u_{xx}\,dt+ \left(0.091_{\pm0.002}u_{x}^2\right)^{1/2}\,dW(t)$}\vspace{2pt}\\
\bottomrule\toprule
\multicolumn{9}{c}{(II) Equation~\eqref{eq_stoch_heat_mix}}\\
\midrule
\multirow{2}{*}{Method}& \multicolumn{2}{c}{Prec ($\uparrow$)}& \multicolumn{2}{c}{Recall ($\uparrow$)}& \multicolumn{2}{c}{$E_{\text{in}}$ ($\downarrow$)}&\multicolumn{2}{c}{$E_{\text{out}}$ ($\downarrow$)}
\\
&Drift&Diff.&Drift&Diff.&Drift&Diff.&Drift&Diff.\\
\midrule
Proposed&0.8749&0.9100&1.0000&0.9050&2.98&18.50&12.61&9.29\\
e-SINDy &0.3008&1.0000&1.0000&1.0000&2.32&19.22&94.36&0.00\\
VB      &0.7883&0.9975&1.0000&1.0000&2.76&20.66&28.60&0.00\\
\midrule
\multicolumn{9}{c}{Most frequently identified model}\\\midrule
Proposed&\multicolumn{8}{l}{$du = 0.996_{\pm0.033} u_{xx}\,dt + \left(2.029_{\pm0.087} u + 0.415_{\pm 0.034} u_x \right)\,dW(t)
$}\\\midrule
e-SINDy&\multicolumn{8}{l}{$\begin{aligned}du =& \left(0.997_{\pm0.027}u_{xx}-1.597_{\pm2.069}u^2u_x-1.907_{\pm5.338}u^3)\right)\,dt\\
&+\Big(0.258_{\pm0.011} u_x^2+1.849_{\pm0.057} uu_x+3.208_{\pm0.050} u^2\Big)^{1/2}\,dW(t)\end{aligned}$}\\\midrule
VB&\multicolumn{8}{l}{$\begin{aligned}du =& 1.003_{\pm0.033}u_{xx}\,dt+\\
&\Big(0.253_{\pm0.009}u_x^2+1.824_{\pm0.042}uu_x+3.104_{\pm0.054}u^2\Big)^{1/2}\,dW(t)\end{aligned}$}\\
\bottomrule
\end{tabular}}
\end{table}
We compare Stoch-IDENT with e-SINDy and the Variational Bayesian (VB) method from~\cite{mathpati2024discovering}, highlighting key differences. First, e-SINDy and VB find the square of the diffusion term. This forces the use of higher-order terms for simple features (e.g., $u_x dW(t)$ requires $u_x^2$), requires a dictionary up to $M(M+1)/2$ terms for identifying $M$ features, and dampens small coefficients, making them hard to detect. Second, their dictionary was limited to terms like $u^q\partial_x^p u$ with integers $p$ and $q$, and their experiments mainly focused on additive noise.

Numerically, we test Stoch-IDENT, e-SINDy, and VB on identifying the following stochastic heat equations with multiplicative noise:
\begin{equation}\label{eq_stoch_heat_mult}
	du= u_{xx} dt + 0.3u_xdW(t)\;,\quad\text{for}~x\in [-\pi, \pi)~\text{and}~t\in(0,0.1)
\end{equation}
and
\begin{equation}\label{eq_stoch_heat_mix}
	du= u_{xx} dt + \left(2u+0.5u_x\right)dW(t)\;,\quad\text{for}~x\in [-\pi, \pi)~\text{and}~t\in(0,0.1)
\end{equation}
both with the initial condition $u(0,x) = 0.2\exp(\sin(3x-0.2))\cdot\cos(4x+0.8)$. Using the grid from Section~\ref{sec_general}, we simulate $N=50$ trajectories. We note that in e-SINDy and VB, the diffuse model to be identified differs from ours. Instead of $0.3u_x$ in~\eqref{eq_stoch_heat_mult} and $2u+0.5u_x$ in~\eqref{eq_stoch_heat_mix}, they seek to identify
$$0.09u_x^2\qquad\text{and}\qquad 4u^2+2uu_x+0.25u_x^2,$$ 
respectively. Hence, the evaluation performances are measured against different references.

Nevertheless, to ensure fairness,  we compare all methods using exactly the same drift dictionary of type $(4,3)$, i.e., partial derivatives with respect to $x$ up to order $4$ and products of at most $3$ terms, yielding $56$ candidate features each; and diffuse dictionary of type $(2,2)$. Stoch-IDENT uses default parameters. For both e-SINDy and VB, there are three tunable parameters: the regularization parameters for the drift $\lambda'_{\mathrm{drift}}$ and for the diffusion $\lambda'_{\mathrm{diffuse}}$, and the truncation parameter $\tau'$, which discards features whose coefficients fall below $\tau'$. We perform a grid search over $\lambda'_{\mathrm{drift}} \in \{0.2,\, 0.5,\, 0.8,\, 1.0\}$, $\lambda'_{\mathrm{diffuse}} \in \{0.01,\, 0.05,\, 0.1,\, 0.5\}$, and $\tau' \in \{1\times 10^{-3},\, 1\times 10^{-2}\}$, and report the combination yielding the highest drift and diffusion precision. For Equation~\eqref{eq_stoch_heat_mult}, the optimal parameters are $\lambda'_{\mathrm{drift}} = 0.8$ and $\lambda'_{\mathrm{diffuse}} = 0.05$ for both e-SINDy and VB, and for e-SINDy, the results are insensitive to $\tau'$, but for VB, the optimal is $\tau'=1\times10^{-2}$. For Equation~\eqref{eq_stoch_heat_mix}, the optimal parameters are $\lambda'_{\mathrm{drift}} = 0.8$ and $\lambda'_{\mathrm{diffuse}} = 0.1$, with no sensitivity to $\tau'$, for e-SINDy; and $\lambda'_{\mathrm{drift}} = 0.5$, $\lambda'_{\mathrm{diffuse}} = 0.1$, and $\tau' = 1\times 10^{-2}$ for VB.
Table~\ref{tab_compare} summarizes results of $100$ experiments.

For identifying~\eqref{eq_stoch_heat_mult}, Stoch-IDENT achieves the highest precision and recall for the drift model and remains comparable with e-SINDy and VB in terms of diffuse identification. We also report the most frequently identified models along with the mean and standard deviation of the reconstructed coefficients, and find that both VB and the proposed method recover the correct model. We highlight that our identification process for the diffuse terms differs from that of e-SINDy and VB, as it involves addressing a more challenging sparse regression problem~\eqref{eq_L0_diffuse_select} with quadratic measurements, which is harder than its linear counterpart~\eqref{eq_L0_drift_select_greedy}. The benefit of tackling this more challenging problem is that we directly recover the diffuse terms  rather than their squares.

Stoch-IDENT continues to perform satisfactorily in identifying~\eqref{eq_stoch_heat_mix}. Notably, Stoch-IDENT still achieves the highest drift precision in this example. As for coefficient recovery, e-SINDy achieves the lowest error for the true drift coefficients, while Stoch-IDENT yields the smallest error for the true diffuse coefficients. The slight underperformance of Stoch-IDENT in identifying the diffuse terms was expected, as it faces the challenge of a difficult sparse nonlinear regression problem~\eqref{eq_L0_diffuse_select}, whereas e-SINDy and VB rely on more tractable linear regression. According to the most frequent models reported in Table~\ref{tab_compare}(II), Stoch-IDENT correctly identifies the exact SPDE form with low estimation error. In contrast, e-SINDy often selects excessive terms, and VB's coefficients do not clearly reveal the true diffusion structure, as they are unlikely to conform to the required binomial form. Furthermore, we highlight that the parameters of Stoch-IDENT remain the same across both examples, whereas e-SINDy and VB require fine-tuning to achieve their best performance.

\subsection{Simulations from the identified model}

\begin{figure}
\begin{tabular}{ccc}
(a)&(b)&(c)\\
\includegraphics[width=0.3\textwidth]{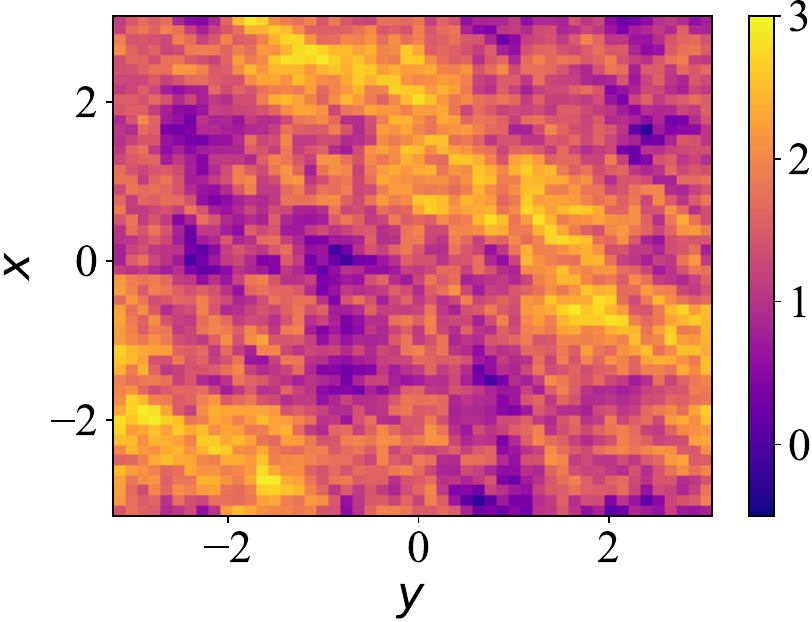}&
\includegraphics[width=0.3\textwidth]{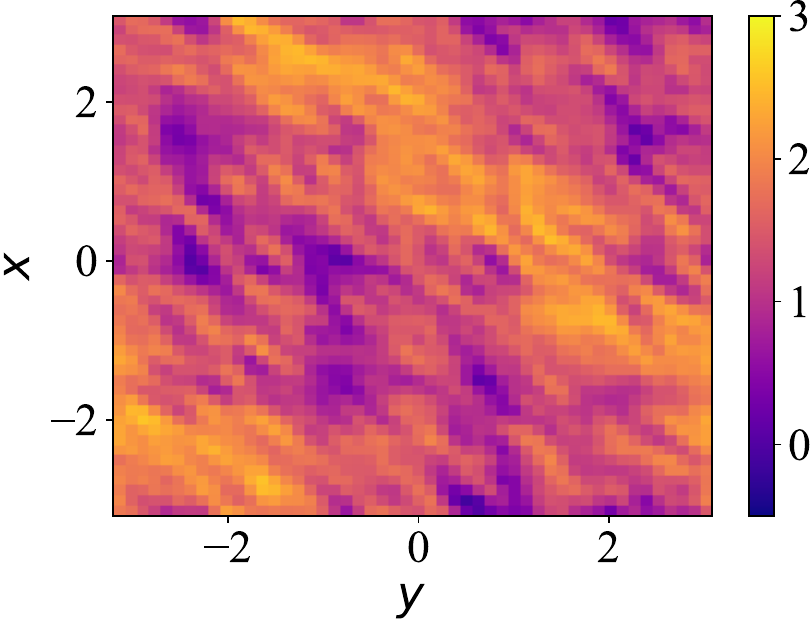}&
\includegraphics[width=0.3\textwidth]{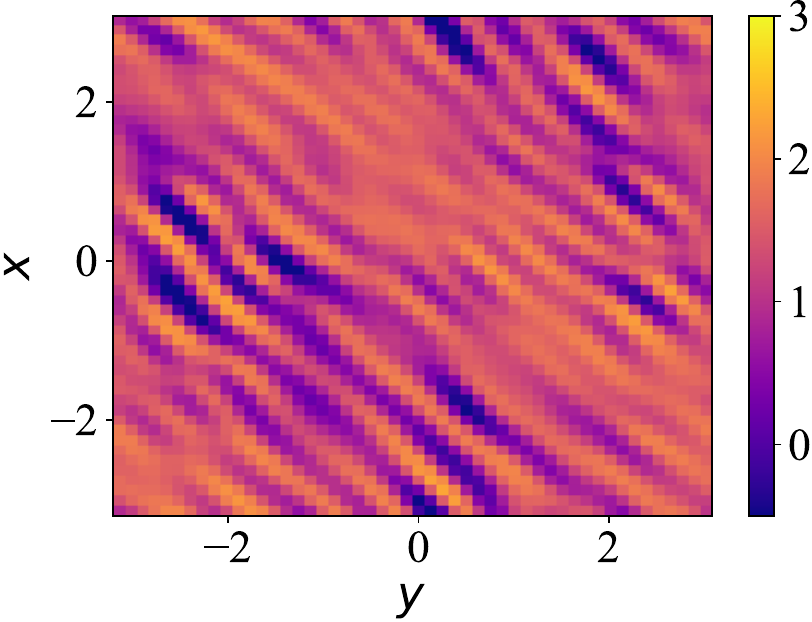}\\
(d) &(e)&(f)\\
\includegraphics[width=0.3\textwidth]{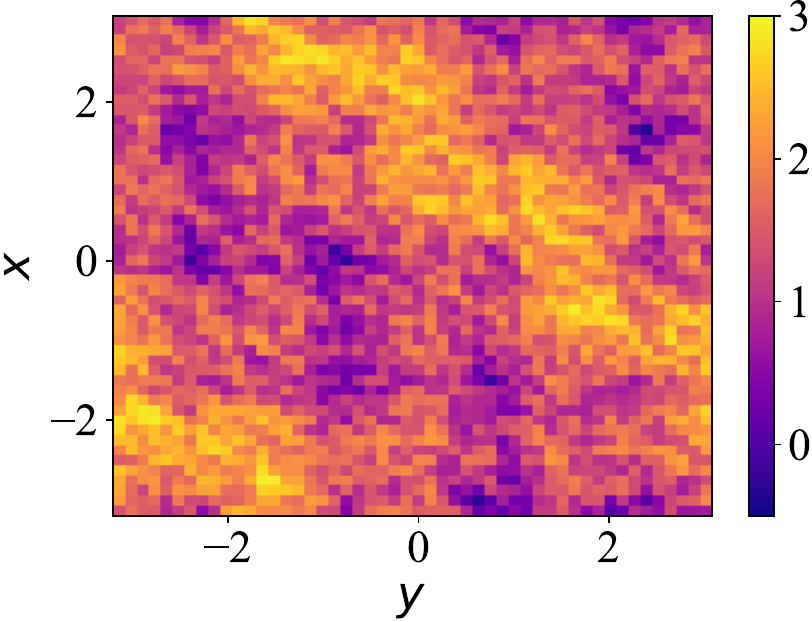}&
\includegraphics[width=0.3\textwidth]{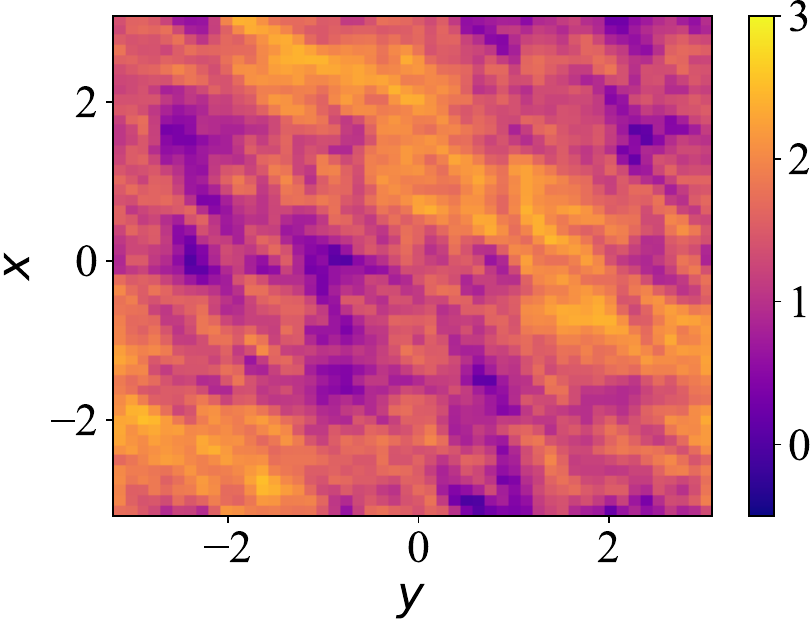}&
\includegraphics[width=0.3\textwidth]{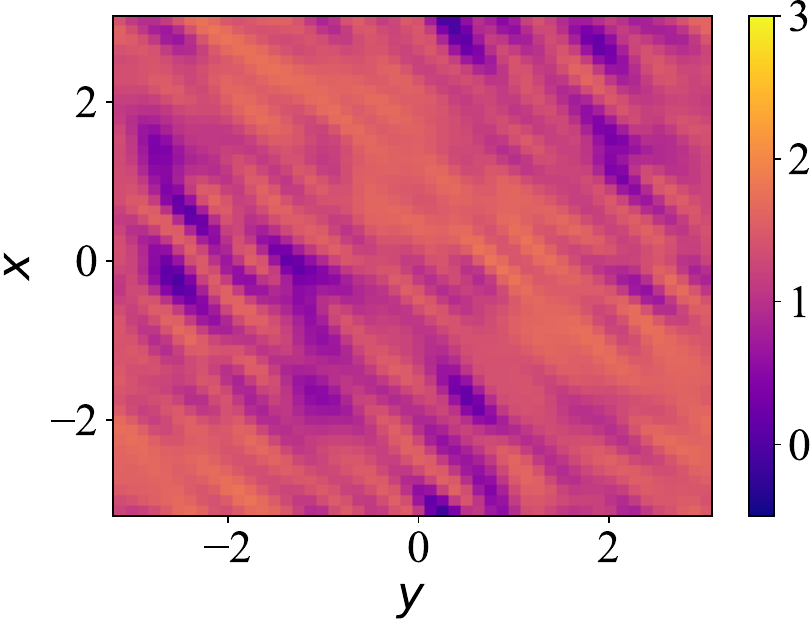}\\
\end{tabular}
\caption{Comparison between a solution path of stochastic Allen-Cahn~\eqref{eq_allen_cahn} at (a) $t=8\times 10^{-3}$ (b) $t=2\times 10^{-2}$, and (c) $t=8\times 10^{-2}$ with  the simulation by the identified SPDE~\eqref{eq_allen_cahn_simulation} at (d)-(f) the same time points. The identification is based on $20$ trajectories sampled on a coarse grid, and the simulated dynamics exhibit a similar pattern formation to that of the true model.}\label{fig_allen_cahn}
\end{figure}

We demonstrate the applicability of Stoch-IDENT by comparing the observed trajectory with the simulation from the identified model. For this experiment, we consider the  \textbf{stochastic Allen-Cahn equation} with multiplicative noise (see e.g. \cite{MR4019051,MR3812224}):
\begin{equation}\label{eq_allen_cahn}
d u=\left(0.5 \Delta u-2(u^3-u)\right)dt+ (u_x + u_y) dW(t)\;,\quad\text{for}~x\in [-\pi, \pi)~\text{and}~t\in(0,0.08)
\end{equation}
with the initial condition  $u(0,x,y)=\sqrt{\sin^2(2x)+\cos^2(2y)} + 0.5\exp(\sin(x+y)) + \zeta(x,y)$ where $\zeta(x,y)$ is a fixed random field independently sampled from a uniform distribution $\cU(-1,1)$ .  The  grid for the data contains equidistant $100$ points in time and $50$ points in both dimensions of the space. When solving~\eqref{eq_allen_cahn}, we up-sample the time by $50$ times.  The first row of Figure~\ref{fig_allen_cahn} shows  sample paths at $t=8\times 10^{-3}$, $t=2\times 10^{-2}$, and $t=8\times 10^{-2}$, where  stripe patterns  are formed. We use dictionaries of type $(3,3)$ for the drift part, and type $(2,2)$ for the diffusion part. 

With $N=20$ trajectories, we can identify the following model
\begin{equation}\label{eq_allen_cahn_simulation}
du=\left(0.333u_{xx} + 0.337u_{yy}-2.059u^3 + 2.084u\right)dt + \left(0.726u_x + 0.850 u_y\right) dW(t)\;,
\end{equation}
which contains the correct terms as in~\eqref{eq_allen_cahn}.  The coefficients for the Laplacian and diffusion terms show lower accuracy compared to the reaction terms. This stems from numerical errors in approximating differential features when using coarse grids. As seen in the second row of Figure~\ref{fig_allen_cahn}, when we simulate the path using the same Wiener process that drove the original dynamics, we observe a similar pattern formation.
In practice, the underlying Wiener process is typically unknown. Consequently,  evaluation metrics like TEE~\cite{kang2021ident} would be time-consuming~\cite{mathpati2024discovering}.

\subsection{Time complexity of Stoch-IDENT}

\begin{table}
\centering
\caption{Average runtime (in seconds) of $10$ runs of Stoch-IDENT when (I) the number of input trajectories is $N$, the dictionary for drift is of type $(4,3)$, and the dictionary for diffusion is of type $(2,2)$; (II) the dictionary for drift is of type $(p_{\text{drift}},2)$, dictionary for diffusion is of type $(2,2)$, and $N=20$; (III) the dictionary for diffusion is of type $(p_{\text{diffusion}},2)$, dictionary for drift is of type $(3,3)$, and $N=10$. The underlying SPDE is fixed as the stochastic transport equation~\eqref{eq_example_transport}.}\label{tab_time}

\begin{tabular}{ccc|ccc|ccc}
\toprule
\multicolumn{3}{c}{(I)}&\multicolumn{3}{c}{(II)}&\multicolumn{3}{c}{(III)}\\
\toprule
\multirow{2}{*}{$N$}& \multicolumn{2}{c|}{Time (sec.)} & \multirow{2}{*}{$p_{\text{drift}}$}&  \multicolumn{2}{c|}{Time (sec.)}&\multirow{2}{*}{$p_{\text{diffusion}}$}& \multicolumn{2}{c}{Time (sec.)}\\
&Drift&Diffusion&&Drift&Diffusion&&Drift&Diffusion\\
\midrule
1&2.32&11.31&2&2.51&13.29&2& 4.27& 12.98\\
10&4.04&12.23&3&2.97&13.37&3&4.23& 20.96\\
100&14.33&21.54&4&3.48&13.38&4&4.25& 21.93\\
200&26.10&28.24&5&4.10&13.41&5&4.26&21.25\\
400&48.11&44.34&6&4.73&13.37&6&4.27&26.62\\
\bottomrule
\end{tabular}
\end{table}

We study the time complexity of Stoch-IDENT\footnote{In this work, the  data generation and identification algorithms are implemented in \texttt{C++} and the construction of feature systems is realized in parallel with \texttt{OpenMP}. All the experiments are conducted in a MacBook Pro with an Apple M3 Max chip ($16$ threads) and 128 GB memory. The reported times include both constructions of the feature systems and the identification processes.}. The runtime of our method is primarily governed by three factors: the number of input trajectories ($N$), the size of the dictionary for the drift terms, and the size of the dictionary for the diffusion terms. 

In Table~\ref{tab_time}, we report the average runtime over 10 runs of Stoch-IDENT for identifying the stochastic transport equation~\eqref{eq_example_transport} as these factors are varied. (I) With the dictionaries for the drift and diffusion features fixed to types $(4,3)$ and $(2,2)$ respectively, we vary $N$ from $1$ to $400$. We observe that the time required to identify both the drift and diffusion parts grows linearly with $N$. The time for the diffusion part is greater than that for the drift part by approximately a constant factor. (II) With $N=10$ fixed, we increase the maximum order of partial derivatives in the dictionary for drift features. We see that this only affects the identification time of the drift part, and this relationship is linear. (III) When we increase the maximum order of partial derivatives in the dictionary for diffusion features,  the identification time for the diffusion part increases linearly.

\section{Conclusion}\label{sec_conclusion}
In this paper, we introduce Stoch-IDENT, a novel framework for identifying SPDEs driven by time-dependent Wiener processes with both additive and multiplicative noise structures. We provide  the theoretical foundation for the identifiability of linear SPDEs with constant coefficients from trajectory data, establishing  conditions under which the drift and diffusion operators can in principle be uniquely determined, independently of any specific algorithm. Specifically, we show that the Fourier modes of the initial data are essential for ensuring the uniqueness of the identified SPDEs. Furthermore, by analyzing the effective dimension of solution trajectories, we demonstrate the intrinsic difficulty of identifying parabolic equations compared to hyperbolic ones.  Algorithmically, we generalize Robust-IDENT~\cite{he2022robust} to recover drift terms and design a new greedy algorithm, QSP, to identify diffusion terms. Notably, QSP is applicable to general sparse regression problems with quadratic measurements, such as phase retrieval and distance-based localization~\cite{fan2018variable}. Through a series of experiments on linear and nonlinear high-order SPDEs, we verify Stoch-IDENT's effectiveness and illustrate its behavior.  

Several interesting directions remain for future work. While the current study focuses on time-dependent Wiener processes and controlled synthetic examples, these include extending Stoch-IDENT to space-time Wiener processes; applying it to real experimental data such as turbulence, materials science, and biophysics; establishing convergence and stability guarantees for QSP; and incorporating uncertainty quantification strategies such as stability selection~\cite{meinshausen2010stability} into the identification pipeline.

\appendix
\section{Some Proofs}
\label{sec3-appendix}
\subsection{Proof of Proposition \ref{prop-drift-identification} in additive noise case}\label{sec3-appendix-prop1}

\begin{proof}
  We only show the proof in the multiplicative noise case, since the one in the additive noise case is similar. The main difference between the proofs in these two cases is that, for multiplicative noise, we begin by taking the expectation of \eqref{mild-fourier-mul}. In contrast, for additive noise, we first take the expectation of \eqref{mild-fourier}.   

For simplicity, we assume that $t_2>t_1=0.$ 
In the multiplicative noise case,  by taking the polar coordinates on \eqref{mild-fourier-mul}, for any $t_1\neq t_2,$ and $\widehat u(t_1, \xi)\neq 0,$ it holds that
\vspace{-2mm}
{\small
\begin{align*}
    (2\pi)^{\frac d2}\log\Big(\Big|\frac {\widehat u(t_2, \xi)}{\widehat u(t_1, \xi)}\Big|\Big)&=\sum_{|\alpha|\le p_1, \alpha \; \text{even}} a_{\alpha}(\bi \xi)^{\alpha}(t_2-t_1)
    +\sum_{|\beta|\le p_2, \beta \; \text{even}} q_{\beta}(\bi \xi)^{\beta}(W(t_2)-W(t_1))\;,\\
       (2\pi)^{\frac d2}\operatorname{Arg} \Big(\Big|\frac {\widehat u(t_2, \xi)}{\widehat u(t_1, \xi)}\Big|\Big)&=\sum_{|\alpha|\le p_1, \alpha \; \text{odd}} a_{\alpha}(\bi \xi)^{\alpha}\bi^{-1}(t_2-t_1)
    +\sum_{|\beta|\le p_2, \beta \; \text{odd}} q_{\beta}(\bi \xi)^{\beta}\bi^{-1}(W(t_2)-W(t_1))\;.
\end{align*}}
By taking expectation on above equalities, we have that 
\vspace{-2mm}
\begin{align}\label{dri-mul-ide1}
    \frac {(2\pi)^{\frac d2}}{t_2-t_1} \mathbb E \Big[\log\Big(\Big|\frac {\widehat u(t_2, \xi)}{\widehat u(t_1, \xi)}\Big|\Big)\Big]&=\sum_{|\alpha|\le p_1, \alpha \; \text{even}} a_{\alpha}(\bi \xi)^{\alpha},\\\label{dri-mul-ide2}
      \frac {(2\pi)^{\frac d2}}{t_2-t_1}  \mathbb E \Big[\operatorname{Arg} \Big(\Big|\frac {\widehat u(t_2, \xi)}{\widehat u(t_1, \xi)}\Big|\Big)\Big]& =\sum_{|\alpha|\le p_1, \alpha \; \text{odd}} a_{\alpha}(\bi \xi)^{\alpha}\bi^{-1}.
\end{align}
Notice that for $|t_2-t_1|>0$ sufficiently small, the phase ambiguity in \eqref{dri-mul-ide2} can be removed. 

We can choose the Fourier modes $\xi_k\in \mathcal Q$ with $k=1,\cdots,\widetilde K\ge |\mathcal Q|.$
Then we can rewrite \eqref{dri-mul-ide1} and \eqref{dri-mul-ide2} as \vspace{-2mm}
\begin{align*}
    &\frac {(2\pi)^{\frac d2}}{t_2-t_1} \by_{even}=\bA_{even}\bc_{even},\; \frac {(2\pi)^{\frac d2}}{t_2-t_1} \by_{odd}=\bA_{odd}\bc_{odd}\;,
\end{align*}
where  $\bc_{even}^{\top}=(\bi^{|\alpha|}a_{\alpha})_{|\alpha|\le p_1, even},$
and $\bc_{odd}^{\top}=(\bi^{|\alpha|-1}a_{\alpha})_{|\alpha|\le p_1, odd}.$ Here 
\begin{align*}
(\by_{even})_k=\mathbb E \log\Big(\Big|\frac {\widehat u(t_2, \xi_k)}{\widehat u(t_1, \xi_k)}\Big|\Big),(\bA_{even})_{k\alpha}=\xi_k^{\alpha}, |\alpha|\le p_1 \;\text{and} \; \text{is even}\;,\\
(\by_{odd})_k=\mathbb E Arg\Big(\Big|\frac {\widehat u(t_2, \xi_k)}{ \widehat u(t_1, \xi_k)}\Big|\Big),(\bA_{odd})_{k\alpha}=\xi_k^{\alpha}, |\alpha|\le p_1 \;\text{and} \; \text{is odd}\;.  
\end{align*}
From the assumption on $\mathcal Q$, we have that $\bA_{even}$ and $\bA_{odd}$ are both of full rank. This implies that $a_{\alpha}$ can be uniquely determined.

\end{proof}

\subsection{Proof of Theorem \ref{diffusion-identification-covariances} in  additive noise case}\label{sec3-appendix-add-diffusion}

\begin{proof}

In the additive noise case,
according to  Ito's isometry, it holds that
\vspace{-2mm}
{\small \begin{align*}
&\mathbb E \Big[\Big|\widehat u(t_2, \xi)-\widehat u(t_1, \xi) \exp\Big((2\pi)^{-\frac d2}\sum_{|\alpha|=0}^{p_1} a_{\alpha} (\bi \xi)^{\alpha}(t_2-t_1)\Big)\Big|^2\Big]\\
&=\int_{t_1}^{t_2}\exp\Big(2(2\pi)^{-\frac d2}\sum_{|\alpha|\le p_1, \; \text{even} } a_{\alpha} (\bi \xi)^{\alpha}(t_2-s)\Big) |\widehat R(\xi) |^2 ds  \times 
  \Big|(2\pi)^{-\frac d2}\sum_{|\beta|=0}^{p_2}b_{\beta}(\bi \xi)^{\beta}\Big|^2.
\end{align*}}
Notice that by our assumption {\small $$\widetilde F(t_1, \xi,t_2):=\int_{t_1}^{t_2}\exp\Big(2(2\pi)^{-\frac d2}\sum_{|\alpha|\le p_1, \; \text{even} } a_{\alpha} (\bi \xi)^{\alpha}(t_2-s)\Big) |\widehat R(\xi) |^2 ds>0,$$}
we can define
\vspace{-2mm}
{\small\begin{align*}
 F(t_1, \xi,t_2):=\frac {\mathbb E \Big[(2\pi)^d\Big|\widehat u(t_2, \xi)-\widehat u(t_1, \xi) \exp\Big((2\pi)^{-\frac d2}\sum_{|\alpha|=0}^{p_1} a_{\alpha} (\bi \xi)^{\alpha}(t_2-t_1)\Big)\Big|^2\Big]}{\widetilde F(t_1, \xi,t_2)}.
\end{align*}}
Then the identification problem can be reformulated as 
\vspace{-2mm}
{\small\begin{align}\nonumber 
F(t_1, \xi,t_2)&=\Big|\sum_{|\beta|\le p_2}b_{\beta}(\bi \xi)^{\beta}\Big|^2
\\\label{cov-iden}
&=\sum_{|\beta|,|\widetilde \beta|\le p_2,\; \widetilde \beta, \; \beta \; \text{even}}\bi^{|\beta+\widetilde \beta |}b_{\beta}q_{\widetilde \beta}(\xi)^{\beta+\widetilde \beta}+\sum_{|\beta|,|\widetilde \beta|\le p_2,\; \widetilde \beta, \; \beta \; \text{odd}}\bi^{|\beta|-|\widetilde \beta| }b_{\beta}q_{\widetilde \beta}(\xi)^{\beta+\widetilde \beta}.
\end{align}}
Take $\xi_k\in \mathcal Q_1, k\le \widetilde K$ with $\widetilde K\ge |\mathcal Q_1|.$ 
It follows from \eqref{cov-iden} that
\vspace{-2mm}
\begin{align*}
\by_{str}= \bA_{str}\bc_{str},    
\end{align*}
where $(\by_{str})_k=F(t_1, \xi_k,t_2),$
$(\bA_{str})_{k\gamma}=\xi^{\gamma}_k, k\le \widetilde K, |\gamma|\le 2p_2  \;\text{is even},$
and 
\vspace{-2mm}
\begin{align}\label{mult-def}
(\bc_{str})_{\gamma}=\sum_{|\beta|,|\widetilde \beta|\; even} \bi ^{|\beta+\widetilde \beta|}b_{\beta}q_{\widetilde \beta}+\sum_{|\beta|,|\widetilde \beta| \; odd} \bi ^{|\beta|-|\widetilde \beta|}b_{\beta}q_{\widetilde \beta}
\end{align}
with $\beta+\widetilde \beta=\gamma$.
By the assumption on $\mathcal Q_1$, $\bA_{str}$ is of full rank and thus $\bc_{str}$ is uniquely determined. 
Since \eqref{mult-def} is a quadratic function with respect to $b_{\beta}$, there exists at most $2^{\binom{p_2+d}{d}}$ isolated solutions, which are viewed as an equivalent class.  Therefore, $b_{\beta}$ is uniquely determined in the sense of an  equivalent class. 
\end{proof}

Below, we provide one simple example of possible multiple solutions in the equivalent class.

\begin{example}
Let $d=1$, $|\beta|\le p_2=2$. Then from  \eqref{cov-iden}, we are solving the following system:
\begin{align*}
F(t_1, \xi,t_2)
=b_0^2+(-2b_2b_0+b_1^2)\xi^2+
b_2^2 \xi^4.
\end{align*}
Let the number $\widetilde K>0$ of Fourier modes be large enough (larger than $3$ in this example). 
Then there exists a unique vector $(x_1,x_2,x_3)$ such that $b_0^2=x_1,-2b_2b_0+b_1^2=x_2,b_2^2=x_3.$ If the solution $(b_0,b_1,b_2)$ exists for \eqref{cov-iden}, then the number of the isolated solutions is at most $2^3$. 
\end{example}

\subsection{Proof of Theorem \ref{theorem-diffusion-identification} in additive noise case}\label{sec-appendix-add-data}

\begin{proof}
To verify \eqref{add-over-err}, it suffices to
 show that that there exists linear spaces $V$ and $V'$ with dimensions $L=\mathcal O(|\log \epsilon|^2)$ such that 
\begin{align}\label{add-over-err1}
\|e^{\mathcal L t}u_0- P_{V}e^{\mathcal L t}u_0\|_{L^2(\Omega;H)}\lesssim \epsilon(1+\|u_0\|_{L^2(\Omega;H)}),
\end{align}
and that 
\vspace{-1mm}
\begin{align}\label{add-over-err2}
\|(I-P_{V'})u_{sto}\|_{L^2(\Omega;H)}\lesssim \epsilon(1+\|u_0\|_{L^2(\Omega;H)}).
\end{align}
Here we denote $u_{sto}(t)=\sum_{k=1}^{\infty}\int_0^t e^{-(\lambda_k-\mu)(t-s)} q_k R_k \phi_k(x) dW(s)$ for convenience.

Consider the Galerkin approximation of $e^{\mathcal L t}$ with parameter $\widetilde M$,
\vspace{-3mm}
$$u_{ini,\widetilde M}(t,x)=e^{\mu t}\sum_{k=1}^{\widetilde M}c_ke^{-\lambda_k t}\phi_k(x)$$
and that of the stochastic convolution with parameter $M'\in \mathbb N,$
$$u_{sto,M'}(t,x)=\sum_{k=1}^{M'}\int_0^t e^{\mu (t-s)} e^{-\lambda_k (t-s)} q_k R_k \phi_k(x) dW(s).$$ We can take $\widetilde K\in \mathbb N^+$ such that $2\mu-\Re(\lambda_k)<0$ for any $k\ge \widetilde K.$ On the other hand, by the assumption,  there exists $M_{\epsilon}\in \mathbb N^+$ such that $\sum\limits_{k=M_{\epsilon}}^\infty
\frac{1}{\Re(\lambda_k)}
|q_k|^2|R_k|^2\le \epsilon^2.$

By the It\^o's isometry and the fact $u_0$ is $\mathcal F_0$-measurable, we have that for $\widetilde M\ge \widetilde K+\epsilon^{\frac 2{1-2\gamma}},$
\vspace{-2mm}
\begin{align}\label{add-est0}
\mathbb E [\|u_{int,\widetilde M}(t, \cdot)-e^{\mathcal Lt}u_0\|^2]
&\le \theta^2 \frac {\widetilde M^{1-2\gamma}}{2\gamma-1}.
\end{align}
and that for $M'\ge \widetilde K+M_{\epsilon},$
\vspace{-2mm}
\begin{align}\nonumber
\mathbb E [\|u_{sto,M'}(t, \cdot)-u_{sto}(t)\|^2] &=\mathbb E [\int_0^t \sum_{k=M'+1}^\infty
e^{-2(\Re(\lambda_k)-\mu)(t-s)}|q_k|^2|R_k|^2\|\phi_k\|^2 ds]\\\nonumber
&\lesssim \sum_{k=M'+1}^\infty
\frac{1}{2\Re(\lambda_k)}
|q_k|^2 |R_k|^2 \lesssim \epsilon^2.
\end{align}

We first prove \eqref{add-over-err1}.
Define \vspace{-2mm}
\begin{align*}
w_{\epsilon}^1(t)&=\sum_{k=1}^{\widetilde M}c_k\sum_{l=0}^{L_{\epsilon}} (-1)^l \frac {(\lambda_kt-\mu t)^l}{l!}\phi_k(x),
\end{align*}
where $\widetilde M, L_{\epsilon}>0,$ will be determined later. 
Then for each $t$, $w_{\epsilon}^1$ sits in the linear space 
\vspace{-1mm}
$$V_1=\text{span} \Big\{\sum_{k=1}^{\widetilde M}c_k(-1)^l\frac {(\lambda_kt-\mu t)^l}{l!}\phi_k(x), l=0,1\cdots,L_{\epsilon}\Big\}.$$
By the Taylor expansion and Minkowski's inequality, as well as the independent increment of $W(\cdot)$, we have that 
\vspace{-3mm}
{\small\begin{align*}
     \|w_{\epsilon}^1(t, \cdot)-e^{\mu t}\sum_{k=1}^{\widetilde M}c_ke^{-\lambda_k t}\phi_k\|_{L^2(\Omega;H)}^2   
    &\le \Big\|\sum_{k=1}^{\widetilde M}c_k\phi_k(x)\sum_{l=L_{\epsilon}+1}^{\infty}(-1)^l\frac {(\lambda_kt-\mu t)^l}{l!}\Big\|_{L^2(\Omega;H)}^2\\
    &\le \sum_{k=1}^{\widetilde M}\mathbb E [|c_k|^2]
\Big|\sum_{l=L_{\epsilon}+1}^{\infty}(-1)^l\frac {(\lambda_kt-\mu t)^l}{l!}\Big|^2\\
    &\le \sum_{k=1}^{\widetilde M}\mathbb E[|c_k|^2] \frac 1{[(L_{\epsilon}+1)!]^2},
    \end{align*}}
where we require that 
$\sup\limits_{k\le \widetilde M}|\lambda_k-\mu|t\le 1,$ i.e., $t\in [0, \inf\limits_{k\le \widetilde M}\frac 1{|\lambda_{k}-\mu|}]$. It follows that for $t\in [0, \inf\limits_{k\le \widetilde M}\frac 1{|\lambda_{k}-\mu|}]$,
\vspace{-2mm}
\begin{align*}
     \|e^{\mu t}\sum_{k=1}^{\widetilde M}c_ke^{-\lambda_k t}\phi_k-w_{\epsilon}^1(t, \cdot)\|_{L^2(\Omega;H)}^2
     &\le \theta^2 \frac {\widetilde M^{1-2\gamma}}{2\gamma-1} e^{-2L_\epsilon}. 
\end{align*}
Letting $L_{\epsilon}=|\log(\epsilon)|$ and $\widetilde M=\widetilde K+\epsilon^{\frac 2{1-2\gamma}},$ and using \eqref{add-est0}, we have that   for $t\in [0,\inf\limits_{k\le \widetilde M}\frac 1{|\lambda_{k}-\mu|}]$,
\vspace{-2mm}\begin{align}\label{add-err-ini}
     \|e^{\mathcal Lt}u_0-w_{\epsilon}^1(t, \cdot)\|_{L^2(\Omega;H)}\lesssim \epsilon(1+\|u_0\|_{L^2(\Omega;H)}). 
    \end{align}

For $t\in [\inf\limits_{k\le \widetilde M}\frac 1{|\lambda_{k}-\mu|},T],$ by \eqref{exp-decaying} and taking $\kappa=\sup\limits_{k\le \widetilde M}|\log(|\lambda_{k}-\mu|)|/|\log(\epsilon)|$, there exists 
$\mathcal A_L$ which is an approximation of $e^{\mathcal L t}$ such that $\|e^{\mathcal L t
}- \mathcal A_L\|_{H\to H}\lesssim \epsilon$ with $t\in [t_0,T]$ such that $t_0=\epsilon^{\kappa}$, and $L=C_{\mathcal {L}}(\kappa)|\log \epsilon|^2.$
This implies that for any $t\in [\inf\limits_{k\le \widetilde M}\frac 1{|\lambda_{k}-\mu|},T],$ there exists a linear space $V_2$ with dimension $L=C_{ {\mathcal L}}(\kappa)|\log \epsilon|^2$ such that
\vspace{-2mm}
\begin{align*}
\|e^{\mathcal L t}u_0-P_{V_2}e^{\mathcal L t}u_0\|\lesssim \epsilon(1+\|u_0\|_{L^2(\Omega;H)}).
\end{align*}
This, together with \eqref{add-err-ini}, yields that for any $t\in[0,T],$ there exists  a linear space  $V_3$  containing $V_1\cup V_2$ such that 
\begin{align}
\|e^{\mathcal L t}u_0-P_{V_3}e^{\mathcal L t}u_0\|\lesssim \epsilon(1+\|u_0\|_{L^2(\Omega;H)}).
\end{align}

Next, we prove \eqref{add-over-err2}. 
Define $w_{\epsilon}^2$ by
\vspace{-2mm}
$$w_{\epsilon}^2(t):=\sum_{k=1}^{M'}\int_0^t  \sum_{l=0}^{L_{\epsilon}} (-1)^l \frac {((\lambda_k-\mu)(t-s) )^l}{l!} q_k R_k \phi_k(x) dW(s)$$
with $M', L_{\epsilon}$ being determined later.
It can be seen that $w_{\epsilon}^2$ sits in the linear space 
\vspace{-1mm}
{\small $$V_4=\text{span} \Big\{\sum_{k=1}^{M'}(-1)^l\int_0^t \frac {((\lambda_k-\mu)(t-s))^l}{l!}q_k R_k dW(s) \phi_k(x), l=0,1\cdots,L_{\epsilon}\Big\}.$$}
Next, we use the Taylor expansion and It\^o's isometry, 
\begin{align*}
&\Big\|w_\epsilon^2(t, \cdot)-\sum_{k=1}^{M'}\int_0^t e^{-(\lambda_k-\mu)(t-s)} q_k R_k \phi_k(x) dW(s)\Big\|_{L^2(\Omega;H)}^2 \\
&\le  \Big\|\sum_{k=1}^{M'}\int_0^t \sum_{l=L_{\epsilon}+1}^{\infty} (-1)^l \frac {((\lambda_k-\mu)(t-s) )^l}{l!}  q_k R_k\phi_k(x)  dW(s)\Big\|_{L^2(\Omega;H)}^2\\
&\le \sum_{k=1}^{M'}
\int_0^t \Big|\sum_{l=L_{\epsilon}+1}^{\infty}(-1)^l\frac {(\lambda_k-\mu)^l(t-s)^l}{l!}\Big|^2|R_k|^2|q_k|^2 ds.
\end{align*}
By requiring that $\sup\limits_{k\le M'}|\lambda_k-\mu|t\le 1,$ i.e., $t\in [0,\inf\limits_{k\le M'}\frac 1{|\lambda_{k}-\mu|}],$  and taking $L_\epsilon=|\log(\epsilon)|$,  it follows that for $t\in [0,\inf\limits_{k\le M'}\frac 1{|\lambda_{k}-\mu|}],$ 
\begin{align}\nonumber 
&\Big\|w_\epsilon^2(t, \cdot)-\sum_{k=1}^{M'}\int_0^t e^{-(\lambda_k-\mu)(t-s)} q_kR_k \phi_k(x) dW(s)\Big\|_{L^2(\Omega;H)}^2\\\nonumber 
&\le \sum_{k=1}^{M'}
\int_0^t \frac 1{[(L_{\epsilon}+1)!]^2}|q_k|^2|R_k|^2 ds\\\label{err-small-int}
&\lesssim \sum_{k=1}^{M'}|q_k|^2|R_k|^2 e^{-2L_{\epsilon}} t\lesssim \sum_{k=1}^{M'}|q_k|^2|R_k|^2 \inf\limits_{k\le M'}\frac 1{|\lambda_{k}-\mu|} e^{-2L_{\epsilon}}\lesssim \epsilon^2,
\end{align}
where we also used the estimate $\inf\limits_{k\le M'}\frac 1{|\lambda_{k}-\mu|} \le \inf\limits_{k\le M'}\frac 1{|\Re(\lambda_k)-\mu|}  \lesssim \frac 1{\Re(\lambda_k)}$ since $\Re (\lambda_{M'})>2\mu$ for  $M'$ large enough,  and  the fact that $$\sum_{k=1}^{\infty}|q_k|^2|R_k|^2 \frac 1{
\Re(\lambda_{k})}\le \sum_{k=1}^{\infty}|q_k|^2 |R_k|^2 \frac 1{\Re( \lambda_{k})^{1-\theta_1}}<\infty.$$ 

We denote $t_1=\inf_{k\le M'}\frac 1{|\lambda_{k}-\mu|}$ for simplicity. For $t\in [t_1,T],$ to obtain \eqref{add-over-err2},
we will show that there exist  linear spaces $V_5,V_6$ such that 
\begin{align}\label{sto-con-err1}
    \|(I-P_{V_5})\sum_{k=1}^{M'}\int_{t-t_1}^t e^{-(\lambda_k-\mu)(t-s)} q_k R_k \phi_k(x) dW(s)\|_{L^2(\Omega;H)}\lesssim \epsilon,\\\label{sto-con-err2}
     \|(I-P_{V_6})\sum_{k=1}^{M'}\int_{0}^{t-t_1} e^{-(\lambda_k-\mu)(t-s)} q_k R_k \phi_k(x) dW(s)\|_{L^2(\Omega;H)}\lesssim \epsilon.
\end{align}

To verify \eqref{sto-con-err2} in $[t_1,T]$, by \eqref{exp-decaying} with $L=C_{\frac {\mathcal L}2}(\kappa)|\log(\epsilon)|^2$ and $\kappa=\sup\limits_{k\le M'}\log $ $(|\lambda_{k}-\mu|)/|\log(\epsilon)|$, using the fact that $t-s\ge t_1 \ge \epsilon^{\kappa}$ and It\^o's isometry,  it holds that 
\begin{align*}
&\Big\|\sum_{k=1}^{M'}q_k R_k \int_0^{t-t_1} [e^{\frac {\mathcal L} 2(t-s)}-\mathcal A_{\frac L 2} (t-s)]e^{\frac {\mathcal L} 2(t-s)}\phi_k dW(s)\Big\|_{L^2(\Omega;H)}^2\\
&\lesssim \sum_{k=1}^{M'} |q_k|^2 |R_k|^2 \epsilon^2 \frac 1{\Re(\lambda_k)}\lesssim \epsilon^2.
\end{align*}
This implies that we can take the linear space
$V_6$ generated by 
$$\sum_{k=1}^{M'}q_k c_l'
(z_l+\frac 12 \mathcal L)^{-1} \int_0^{t-t_1} e^{-z_l(t-s)}e^{ \frac {\mathcal L}{2}(t-s)}dW(s)\phi_k, |l|\le L,$$ 
where $c_l',z_l\in\mathbb C$,
such that \eqref{sto-con-err2} holds.

To verify \eqref{sto-con-err1} in $[t_1,T]$, using It\^o's isometry, by change of variables and the boundedness of $e^{\mathcal L(\cdot)},$ we obtain that for  $\theta_1\in (0,1),$
\vspace{-2mm}
\begin{align*}
&\Big\|\sum_{k=1}^{M'}q_kR_k\int_{t-t_1}^t [e^{-\frac {\mathcal L} 2(t-s)}-I]e^{-\frac {\mathcal L} 2(t-s)}\phi_k dW(s)\Big\|_{L^2(\Omega;H)}^2
\\
&\lesssim \sum_{k=1}^{M'} \frac {|q_k|^2|R_k|^2}{\Re({\lambda_k})}  (e^{-\Re({\lambda_k})t_1}-e^{-2\Re({\lambda_k})t_1})\lesssim \sum_{k=1}^{M'} \frac {|q_k|^2|R_k|^2}{\Re({\lambda_k})^{1-\theta_1}}   t_1^{\theta}.
\end{align*}
Here we  take 
$M'\ge M_{\epsilon}$ large enough such  that 
$\Re(\lambda_{M'})^{-\theta_1}\le \epsilon^2$, and use the assumption 
that $ \sum_{k=1}^{\infty} \frac {|q_k|^2|R_k|^2}{\Re({\lambda_k})^{1-\theta_1}}<\infty.$
As a consequence, we can take $V_5$ generated by 
$\sum_{k=1}^{M'}q_kR_k \int_{t-t_1}^{t} e^{ \frac {\mathcal L}{2}(t-s)}dW(s)\phi_k$
such that \eqref{sto-con-err1} holds.

Combining \eqref{sto-con-err1}-\eqref{sto-con-err2}, and \eqref{err-small-int} together, we have that \eqref{add-over-err2}  holds for $t\in[0,T]$. 

\end{proof}

\subsection{Proof of Proposition~\ref{lemma_S_drift}}\label{proof_lemma_S_diffuse}
\begin{proof}
We only prove the estimate of The proof for the estimate of $S(\bb|\ba)$ is similar and thus omitted here. 
Since $\mathbb{E}\left[R^2_i(x,\ba^*)\right]=\mathbb{E}\int_{0}^{t_i}\left(\sum_{j=1}^J \bb_{j}^*G_j(s,x)\right)^2\,ds,$
we have that 
\vspace{-2mm}
{\small
\begin{align*}
  &S_{\diffuse}(\bb|\ba)
 = I^{-1}\cdot\sum_{i=1}^I\int_\cD\Bigg|\mathbb{E}\left[R^2_i(x,\widehat{\ba})\right]-\mathbb{E}\left[R^2_i(x,\ba^*)\right]+\mathbb{E}\int_{0}^{t_i}\left(\sum_{j=1}^J \bb_{j}^*G_j(s,x)\right)^2\,ds \\
  &\quad- \mathbb{E}\int_{0}^{t_i}\left(\sum_{j=1}^J\bb_{j}G_j(s,x)\right)^2\,ds\Bigg|^2\,dx.
\end{align*}}
By the Cauchy--Schwarz inequality and the assumptions on the moment boundedness of features, we obtain 
\vspace{-2mm}
\begin{align*}
    &|S_{\diffuse}(\bb|\ba)|\le C(\| \ba \| +\|\ba^*\|)^2(\| \ba-\ba^*\|^2)
    +C(\|\bb\| +\|\bb^* \|)^2\|\bb-\bb^*\|^2.
\end{align*}

\end{proof}

\section{Proof of Theorem~\ref{thm:main}}\label{sec:proof-main}
To prove the stability of the support recovery of QSP
(Algorithm~\ref{QSPalgo}) as stated in Theorem~\ref{thm:main}, we first establish some  lemmas.

\subsection{Preliminary lemmas}

\begin{lemma}[Support identification at initialization]
\label{lem:support_init}
Recall $q_s^{(0)} = \min_{c \in \R}\sum_{i=1}^{I}
\Bigl(\bG_i^{s,s}\,c - \eta_i^{(0)}\Bigr)^2$ for $s=1,\dots,J$ in Algorithm~\ref{QSPalgo}. Under Assumptions~\ref{ass:bounded} and~\ref{ass:signal}, for any $s \in S^*$ and $t \notin S^*$,
\[
    q_s^{(0)} < q_t^{(0)}.
\]
\end{lemma}

\begin{proof}
Note that $\eta_i^{(0)} = \zeta_i = (\starbc)^\top \bG_i \starbc +
\epsilon_i$ and the closed form:
\[
    q_s^{(0)}
    \;=\;
    \sum_{i=1}^{I}(\eta_i^{(0)})^2
    \;-\;
    \frac{\Bigl(\sum_{i=1}^{I}
          \bG_i^{s,s}\,\eta_i^{(0)}\Bigr)^2}
         {\sum_{i=1}^{I}(\bG_i^{s,s})^2}.
\]
Since the term $\sum_i(\eta_i^{(0)})^2$ is common to $q_s^{(0)}$
and $q_t^{(0)}$, it suffices to show:
\[
    \frac{\Bigl(\sum_{i}\bG_i^{s,s}\,\eta_i^{(0)}\Bigr)^2}
         {\sum_i(\bG_i^{s,s})^2}
    \;-\;
    \frac{\Bigl(\sum_{i}\bG_i^{t,t}\,\eta_i^{(0)}\Bigr)^2}
         {\sum_i(\bG_i^{t,t})^2}
    \;>\;0.
\]
We divide the proof into three steps.

\medskip
\noindent\textbf{Step 1. Lower bound for $s \in S^*$.}
Expanding $\eta_i^{(0)} = \sum_{u,v \in S^*}\bG_i^{u,v}
[\starbc]_u[\starbc]_v + \epsilon_i$:
\begin{align*}
    \sum_{i=1}^{I}\bG_i^{s,s}\,\eta_i^{(0)}
    &=
    \sum_{i=1}^{I}(\bG_i^{s,s})^2[\starbc]_s^2
    +
    \sum_{i=1}^{I}\bG_i^{s,s}
    \sum_{\substack{u,v \in S^*\\(u,v)\neq(s,s)}}
    \bG_i^{u,v}[\starbc]_u[\starbc]_v
    +
    \sum_{i=1}^{I}\bG_i^{s,s}\,\epsilon_i.
\end{align*}
By the triangle inequality:
\[
    \left|
    \sum_{i=1}^{I}\bG_i^{s,s}
    \sum_{\substack{u,v \in S^*\\(u,v)\neq(s,s)}}
    \bG_i^{u,v}[\starbc]_u[\starbc]_v
    \right|
    \;\leq\;
    \sum_{\substack{u,v \in S^*\\(u,v)\neq(s,s)}}
    |[\starbc]_u|\,|[\starbc]_v|
    \sum_{i=1}^{I}\bG_i^{s,s}\,|\bG_i^{u,v}|.
\]
For each pair $(u,v)$ with $u \neq v$, we apply the  Cauchy--Schwarz inequality 
over $i$:
\begin{align*}
    \sum_{i=1}^{I}\bG_i^{s,s}\,|\bG_i^{u,v}|
    &=
    \sum_{i=1}^{I}
    \bG_i^{s,s}
    \sqrt{\bG_i^{u,u}\bG_i^{v,v}}
    \cdot
    \sqrt{\frac{|\bG_i^{u,v}|^2}{\bG_i^{u,u}\bG_i^{v,v}}}
    \leq
    \sqrt{
        \sum_{i=1}^{I}
        (\bG_i^{s,s})^2\bG_i^{u,u}\bG_i^{v,v}
    }
    \cdot
    \sqrt{
        \sum_{i=1}^{I}
        \frac{|\bG_i^{u,v}|^2}{\bG_i^{u,u}\bG_i^{v,v}}
    }.
\end{align*}
By Assumption~\ref{ass:bounded}, $\bG_i^{u,u} \leq M$ and
$\bG_i^{v,v} \leq M$, so:
\[
    \sqrt{\sum_{i=1}^{I}(\bG_i^{s,s})^2\bG_i^{u,u}\bG_i^{v,v}}
    \;\leq\;
    M\sqrt{\sum_{i=1}^{I}(\bG_i^{s,s})^2}.
\]
By Definition~\ref{def:coherence}:
\[
    \sqrt{
        \sum_{i=1}^{I}
        \frac{|\bG_i^{u,v}|^2}{\bG_i^{u,u}\bG_i^{v,v}}
    }
    \;\leq\;
    \sqrt{I\,\mu_{\bG}}.
\]
Therefore, summing over all pairs $(u,v) \in S^* \times S^*$
with $(u,v) \neq (s,s)$ and using
$\sum_{u,v \in S^*}|[\starbc]_u||[\starbc]_v| =
\left(\sum_{u \in S^*}|[\starbc]_u|\right)^2 \leq j\norm{\starbc}^2$
by the Cauchy--Schwarz inequality:
\[
    \left|
    \sum_{i=1}^{I}\bG_i^{s,s}
    \sum_{\substack{u,v \in S^*\\(u,v)\neq(s,s)}}
    \bG_i^{u,v}[\starbc]_u[\starbc]_v
    \right|
    \;\leq\;
    |S^*|\,M\,\sqrt{I\,\mu_{\bG}}\,\norm{\starbc}^2\,
    \sqrt{\sum_{i=1}^{I}(\bG_i^{s,s})^2}.
\]
For the noise term, by Assumption~\ref{ass:bounded} and
$|\epsilon_i| \leq \epsilon$:
\[
    \left|\sum_{i=1}^{I}\bG_i^{s,s}\,\epsilon_i\right|
    \;\leq\;
    I\,M\,\epsilon.
\]
Combining and denoting $D_s := \frac{1}{I}\sum_{i=1}^{I}
(\bG_i^{s,s})^2$, and dividing by $I$:
\[
    \frac{1}{I}\sum_{i=1}^{I}\bG_i^{s,s}\,\eta_i^{(0)}
    \;\geq\;
    [\starbc]_s^2\,D_s
    \;-\;
    |S^*|\,M\,\sqrt{\mu_{\bG}\,D_s}\,\norm{\starbc}^2
    \;-\;
    M\,\epsilon.
\]
Assuming that
\[
    \epsilon
    \;\leq\;
    \epsilon_1^*
    \;:=\;
    \min_{s \in S^*}
    \left(
         \frac{[\starbc]_s^2\,D_s}{M}
    \;-\;
    |S^*|\,\sqrt{\mu_{\bG}\,D_s}\,\norm{\starbc}^2
    \right)
    \;>\; 0,
\]
where $\epsilon_1^* > 0$ follows from Assumption~\ref{ass:signal},
we obtain for all $s \in S^*$:
\begin{equation}\label{eq:lb_s}
    \frac{\Bigl(\frac{1}{I}\sum_{i}\bG_i^{s,s}\,
          \eta_i^{(0)}\Bigr)^2}{D_s}
    \;\geq\;
    \left(
        [\starbc]_s^2\,\sqrt{D_s}
        -
        |S^*|\,M\,\sqrt{\mu_{\bG}}\,\norm{\starbc}^2
        -
        \frac{M\,\epsilon}{\sqrt{D_s}}
    \right)^2.
\end{equation}

\medskip
\noindent\textbf{Step 2. Upper bound for $t \notin S^*$.}
By the same argument with $D_t := \frac{1}{I}
\sum_{i=1}^{I}(\bG_i^{t,t})^2$:
\[
    \left|
    \frac{1}{I}\sum_{i=1}^{I}\bG_i^{t,t}\,\eta_i^{(0)}
    \right|
    \;\leq\;
    |S^*|\,M\,\sqrt{\mu_{\bG}}\,\norm{\starbc}^2\,
    \sqrt{D_t}
    \;+\;
    M\,\epsilon.
\]
Squaring and dividing by $D_t$:
\begin{equation}\label{eq:ub_t}
    \frac{\Bigl(\frac{1}{I}\sum_{i}\bG_i^{t,t}\,
          \eta_i^{(0)}\Bigr)^2}{D_t}
    \;\leq\;
    \left(
        |S^*|\,M\,\sqrt{\mu_{\bG}}\,\norm{\starbc}^2
        +
        \frac{M\,\epsilon}{\sqrt{D_t}}
    \right)^2.
\end{equation}

\medskip
\noindent\textbf{Step 3. The gap bound.}
Subtracting \eqref{eq:ub_t} from \eqref{eq:lb_s}:
\begin{align*}
    &\frac{\Bigl(\frac{1}{I}\sum_{i}\bG_i^{s,s}\,
           \eta_i^{(0)}\Bigr)^2}{D_s}
    -
    \frac{\Bigl(\frac{1}{I}\sum_{i}\bG_i^{t,t}\,
           \eta_i^{(0)}\Bigr)^2}{D_t}
    \\
    &\;\geq\;
    \Biggl(
        [\starbc]_s^2\,\sqrt{D_s}
        -
        2\,|S^*|\,M\,\sqrt{\mu_{\bG}}\,\norm{\starbc}^2
        -
        M\,\epsilon\left(\frac{1}{\sqrt{D_s}} +
        \frac{1}{\sqrt{D_t}}\right)
    \Biggr)
    \\
    &\quad\times
    \Biggl(
        [\starbc]_s^2\,\sqrt{D_s}
        +
        M\,\epsilon\left(\frac{1}{\sqrt{D_t}} -
        \frac{1}{\sqrt{D_s}}\right)
    \Biggr)
    \;=:\; f_{s,t}(\epsilon).
\end{align*}
At $\epsilon = 0$:
\[
    f_{s,t}(0)
    \;=\;
    \left(
        [\starbc]_s^2\,\sqrt{D_s}
        -
        2\,|S^*|\,M\,\sqrt{\mu_{\bG}}\,\norm{\starbc}^2
    \right)
    \cdot
    [\starbc]_s^2\,\sqrt{D_s}
    \;>\; 0,
\]
under Assumption~\ref{ass:signal}. Since $f_{s,t}(\epsilon)$
is quadratic in $\epsilon$ with $f_{s,t}(0) > 0$, there
exists $\epsilon^*_{s,t} \in (0,+\infty]$ such that
$f_{s,t}(\epsilon) > 0$ for any $\epsilon \in
[0, \epsilon^*_{s,t})$. Taking
\begin{equation}\label{eq:epsilon2}
    \epsilon_2^*
    \;:=\;
    \min\left\{\epsilon_1^*,\,
        \min_{s \in S^*,\, t \notin S^*}
        \epsilon^*_{s,t},\, R
    \right\}
    \;>\; 0
\end{equation}
for some sufficiently large $R > 0$, and
$\delta^* := \min_{s \in S^*,\, t \notin S^*}
f_{s,t}(\epsilon_2^*) > 0$, we have for all
$\epsilon \leq \epsilon_2^*$:
\[
    \frac{\Bigl(\frac{1}{I}\sum_{i}\bG_i^{s,s}\,
           \eta_i^{(0)}\Bigr)^2}{D_s}
    -
    \frac{\Bigl(\frac{1}{I}\sum_{i}\bG_i^{t,t}\,
           \eta_i^{(0)}\Bigr)^2}{D_t}
    \;\geq\;
    \delta^*
    \;>\; 0
\]
for any $s \in S^*$ and $t \notin S^*$.
\end{proof}

\begin{lemma}[Support stability for $\ell \geq 1$]
\label{lem:support_stability}
Suppose
$S^* \subseteq \tilde{\mathcal{I}}^{\ell+1}$ and
$\norm{\overline{\mathbf{c}}^{(\ell+1)} - \starbc} \leq r$
where $r < \frac{1}{2}\min_{s \in S^*}|[\starbc]_s|$.
Then $S^* \subseteq \mathcal{I}^{\ell+1}$.
\end{lemma}

\begin{proof}
We show that after the shrinking step of Algorithm~\ref{QSPalgo}
applied to $\overline{\mathbf{c}}^{(\ell+1)}$, all indices in $S^*$
are retained in $\mathcal{I}^{\ell+1}$.
Since $\supp(\overline{\mathbf{c}}^{(\ell+1)}) \subseteq
\tilde{\mathcal{I}}^{\ell+1} \supseteq S^*$ and
$\norm{\overline{\mathbf{c}}^{(\ell+1)} - \starbc} \leq r$,
for any $s \in S^*$:
\[
    |[\overline{\mathbf{c}}^{(\ell+1)}]_s|
    \;\geq\;
    |[\starbc]_s|
    - |[\overline{\mathbf{c}}^{(\ell+1)}]_s - [\starbc]_s|
    \;\geq\;
    |[\starbc]_s| - \norm{\overline{\mathbf{c}}^{(\ell+1)}
    - \starbc}_2
    \;\geq\;
    |[\starbc]_s| - r
    \;>\; r.
\]
Meanwhile, for any $t \notin S^*$, since $[\starbc]_t = 0$:
\[
    |[\overline{\mathbf{c}}^{(\ell+1)}]_t|
    \;\leq\;
    \norm{\overline{\mathbf{c}}^{(\ell+1)} - \starbc}_2
    \;\leq\;
    r
    \;<\;
    \min_{s \in S^*}|[\starbc]_s| - r
    \;\leq\;
    |[\overline{\mathbf{c}}^{(\ell+1)}]_s|,
    \quad \forall\, s \in S^*.
\]
The shrinking step of Algorithm~\ref{QSPalgo} keeps the $j$
indices with largest absolute values of
$\overline{\mathbf{c}}^{(\ell+1)}$. Since every $s \in S^*$
satisfies $|[\overline{\mathbf{c}}^{(\ell+1)}]_s| > r$,  $\mathcal{I}^{\ell+1} \supseteq S^*$.
\end{proof}

\subsection{Proof of the stability of support}
\begin{proof}
\noindent\textbf{Part (1).}
By Lemma~\ref{lem:support_init} and $\epsilon 
<\epsilon_2^*$, we have $q_s^{(0)} < q_t^{(0)}$ for
all $s \in S^*$ and $t \notin S^*$. Since $j \geq
|S^*|$, the initialization step of
Algorithm~\ref{QSPalgo} selects the $j$ indices with
smallest $|q_s^{(0)}|$, which includes all of $S^*$.
Therefore $S^* \subseteq \mathcal{I}^0$.

\medskip
\noindent\textbf{Part (2).}
Suppose $S^* \subseteq \mathcal{I}^\ell$ for some
$\ell \geq 0$. The expansion step gives $S^* \subseteq
\mathcal{I}^\ell \subseteq \tilde{\mathcal{I}}^{\ell+1}$.
By \eqref{eq:approx_cond}:
\[
    \norm{\overline{\mathbf{c}}^{(\ell+1)} - \starbc}^2
    \;<\;
    \frac{\min_{s \in S^*}\sqrt{D_s}}
         {4\,\max_{s \in S^*}\sqrt{D_s}}
    \cdot
    \min_{s \in S^*}[\starbc]_s^2.
\]
Taking square roots:
\[
    \norm{\overline{\mathbf{c}}^{(\ell+1)} - \starbc}
    \;<\;
    \sqrt{\frac{\min_{s \in S^*}\sqrt{D_s}}
              {4\,\max_{s \in S^*}\sqrt{D_s}}}
    \cdot
    \min_{s \in S^*}|[\starbc]_s|
    \;\leq\;
    \frac{1}{2}\min_{s \in S^*}|[\starbc]_s|,
\]
 So the hypothesis of
Lemma~\ref{lem:support_stability} is satisfied, and
$S^* \subseteq \mathcal{I}^{\ell+1}$.
\end{proof}

\bibliographystyle{siam}
\bibliography{sample}
\end{document}